\documentclass[11pt]{article}

\usepackage[T1]{fontenc}
\usepackage[utf8]{inputenc}
\usepackage[margin=1.05in]{geometry}
\usepackage{amsmath,amssymb,amsthm,mathtools}
\usepackage{microtype}
\usepackage[hidelinks]{hyperref}
\hypersetup{pdftitle={Generalized Einstein Laurent Polynomials, Toric
  Kahler-Einstein Rigidity, and Finite Exponential Families},
  pdfauthor={Shaosai Huang}}

\newtheorem{theorem}{Theorem}[section]
\newtheorem{proposition}[theorem]{Proposition}
\newtheorem{lemma}[theorem]{Lemma}
\newtheorem{corollary}[theorem]{Corollary}
\newtheorem{mainthm}{Theorem}

\newtheorem{maincor}[mainthm]{Corollary}
\theoremstyle{definition}
\newtheorem{definition}[theorem]{Definition}
\newtheorem{remark}[theorem]{Remark}

\newcommand{\C}{\mathbb C}
\newcommand{\R}{\mathbb R}
\newcommand{\Z}{\mathbb Z}
\newcommand{\NP}{\operatorname{Newt}}
\newcommand{\ord}{\operatorname{ord}}
\newcommand{\Ric}{\operatorname{Ric}}
\newcommand{\Aut}{\operatorname{Aut}}
\newcommand{\FS}{\mathrm{FS}}

\newcommand{\Span}{\operatorname{span}}
\newcommand{\conv}{\operatorname{conv}}
\newcommand{\supp}{\operatorname{supp}}
\newcommand{\id}{\mathrm{id}}

\title{Generalized Einstein Laurent Polynomials,\\
Toric K\"ahler--Einstein Rigidity, and Finite Exponential
Families\thanks{Working paper. Comments welcome.}}
\author{Shaosai Huang\thanks{Kspectra Research Inc., Toronto, ON, M2N 0G3,
Canada. Email:
\href{mailto:arthur.foxie.huang@kspectra.ai}{arthur.foxie.huang@kspectra.ai}.}}
\date{\today}

\begin{document}
\maketitle

\begin{abstract}
We study when the logarithm $\psi$ of a positive finite exponential sum on
$\R^d$ satisfies $\det\nabla^2\psi=C\exp(\langle b,\theta\rangle-\lambda\psi)$.
This is the K\"ahler--Einstein equation for metrics induced by exponential
maps into projective space and, for natural exponential families, the
condition that the Jeffreys prior be a Diaconis--Ylvisaker conjugate prior.
First, we classify the bivariate Laurent polynomials with unimodular support
satisfying the generalized Einstein condition of Di Scala and Sombra: up to
units and monomial changes of coordinates, they are powers of an affine
trinomial or products of powers of two independent binomials.  Second, we
show in every dimension that a smooth compact toric manifold with a
projectively induced K\"ahler--Einstein metric is a product of projective
spaces with matched multiples of the Fubini--Study metrics, immersed by a
complete Veronese--Segre system up to automorphisms.  This proves the compact
toric case of the homogeneity conjecture for such metrics and the fixed-point
germ and univalent forms of a conjecture of Manno and Salis.  Third, without
lattice or rationality assumptions, the finite-support exponential families
satisfying the equation are, up to affine changes of statistic, exactly the
products of multinomial families with a common ratio of categories to trials;
this settles the finite-support case of a question of Casalis.
\end{abstract}

\noindent\textit{2020 Mathematics Subject Classification.} Primary 53C55;
Secondary 32Q20, 14M25, 52B20, 53C24, 62E10, 62H05, 62F15, 53B12.

\noindent\textit{Keywords.} K\"ahler--Einstein metrics; projectively induced
metrics; toric manifolds; generalized Einstein condition; lattice polytopes;
natural exponential families; Jeffreys prior; Diaconis--Ylvisaker conjugate
priors.

\section{Introduction}

This paper studies one rigidity phenomenon from three points of view:
algebraic, geometric, and statistical.  Let $A\subset\R^d$ be a finite set
whose affine span is $\R^d$, let $c_a>0$ for $a\in A$, and put
\[
 Z(\theta)=\sum_{a\in A}c_ae^{\langle a,\theta\rangle},
 \qquad \psi=\log Z.
\]
We ask when there are constants $C>0$, $b\in\R^d$, and $\lambda\in\R$ such
that
\begin{equation}
 \det\nabla^2\psi(\theta)
 =C\exp\{\langle b,\theta\rangle-\lambda\psi(\theta)\}
 \qquad(\theta\in\R^d).
 \label{eq:intro-fisher-determinant}
\end{equation}
If the coordinates of $\theta$ are split into blocks
$(\theta_{j1},\ldots,\theta_{jn_j})$, $1\le j\le k$, then
$Z(\theta)=\prod_j(1+\sum_qe^{\theta_{jq}})^{m_j}$ is a solution whenever every
ratio $(n_j+1)/m_j$ equals $\lambda$.  We show that, up to affine changes of
$\theta$, exponential tilts, and rescaling, there are no other solutions, and
we establish the corresponding rigidity for the projective geometry behind the
equation.

Equation~\eqref{eq:intro-fisher-determinant} has two classical readings.  In
K\"ahler geometry, the holomorphic map
$\Phi(z)=[\sqrt{c_a}\,e^{\langle a,z\rangle}]_{a\in A}$ from $\C^d$ to
$\mathbb P^{|A|-1}$ pulls the Fubini--Study form back to a K\"ahler form
$\omega$ with potential $\psi(z+\bar z)$, and
\eqref{eq:intro-fisher-determinant} is exactly the K\"ahler--Einstein
equation $\Ric(\omega)=\lambda\omega$.  If $A$ lies in a lattice, $\Phi$ is a
monomial map of the complex torus.  Up to an automorphism and a unitary change
of target coordinates, every K\"ahler--Einstein metric induced on a compact
toric manifold by a projective immersion has this form on the dense orbit
(Lemma~\ref{lem:adapted-torus}).  Loi and Zedda conjecture that complete
K\"ahler--Einstein manifolds K\"ahler immersed into a finite-dimensional
projective space are homogeneous \cite[Conjecture~4.3.3]{LoiZedda2018}; for
compact toric manifolds this was known through complex dimension six
\cite{ArezzoLoiZuddas2012,MannoSalis2026} and for selected families
\cite{DiScalaSombra2025}.

In statistics, $\psi$ is the cumulant function of
the natural exponential family generated by $\sum_ac_a\delta_a$, its Hessian
is the Fisher information, and \eqref{eq:intro-fisher-determinant} says that
the Jeffreys prior is a Diaconis--Ylvisaker conjugate prior
\cite{DiaconisYlvisaker1979}.  The same identity characterizes the families
for which the Diaconis--Ylvisaker family, transported to the mean parameter,
coincides with the analogous family defined directly on the mean parameter
\cite{Casalis1996,GutierrezPenaSmith1995}.  In dimension one these are exactly
Morris's quadratic-variance families
\cite{Morris1982,ConsonniVeronese1992,EavesChang1992}; on $\R^d$, Casalis
observed that simple quadratic and Wishart families have this property but
could not determine the whole class \cite{Casalis1996}.

Our main results form a chain.  Theorem~\ref{thm:intro-surface} classifies
the relevant Laurent polynomials in two variables.
Theorem~\ref{thm:intro-global} and Corollary~\ref{cor:intro-manno-salis} use
it to prove, in every dimension, the compact toric case of the Loi--Zedda
conjecture and the germ and univalent forms of conjectures of Manno and Salis
\cite{MannoSalis2024}.
Theorem~\ref{thm:intro-nef} then removes the lattice, the torus, and the
compact manifold from the hypotheses: for an arbitrary finite carrier $A$, the
solutions of \eqref{eq:intro-fisher-determinant} are, up to the symmetries
above, exactly the matched multinomial products.  In statistical language,
these are the only finitely supported natural exponential families whose
Jeffreys prior is a Diaconis--Ylvisaker conjugate prior, or equivalently whose
two conjugate families coincide.  A single two-dimensional obstruction thus
governs a problem in K\"ahler geometry and a question of Casalis in
statistics.

\subsection{Generalized Einstein polynomials in two variables}

We begin with the algebraic core of the paper.  When $A\subset\Z^d$, the
substitution $x=e^\theta$ turns $Z$ into a positive
Laurent polynomial $p$ and $\partial_{\theta_i}$ into
$\Theta_i=x_i\partial_{x_i}$.  For a Laurent polynomial $p$ of full effective
rank, with arbitrary complex coefficients, the logarithmic Monge--Amp\`ere
polynomial is $\mu(p)=p^{d+1}\det\Theta^2\log p$ (see \eqref{eq:def-mu}), and
\eqref{eq:intro-fisher-determinant} becomes $\mu(p)=Cx^bp^{\,d+1-\lambda}$ on
the positive orthant.  Di Scala and Sombra introduced the resulting
\emph{generalized Einstein condition} (GEC) $\mu(p)\mid p^N$, which holds
whenever $p$ comes from a K\"ahler--Einstein toric immersion of a compact
toric manifold (Section~\ref{sec:global}), and proved that it passes to every
face of the Newton polytope \cite{DiScalaSombra2025}.  GEC is formulated for unimodular supports: at each
vertex $v$ of the convex hull, the nearest other support points on the
incident edges differ from $v$ by a lattice basis
\cite[Definition~2.5]{DiScalaSombra2025}; supports of full toric immersions
of compact toric manifolds have this property
\cite[Lemma~2.6]{DiScalaSombra2025}.

By heredity, every two-face carries a
rank-two GEC polynomial.  The rank-two edge identities of Di Scala and Sombra
do not determine these polynomials: they are compatible with smooth polygons
having six or more sides.  Our first main result gives the complete
classification.

\begin{mainthm}[Bivariate classification]
\label{thm:intro-surface}
Let $p\in\C[\Z^2]$ have effective rank two and unimodular support.  If
$\mu(p)\mid p^N$ for some integer $N>0$, then, up to a Laurent unit and an
integral monomial change of coordinates,
\[
 p=(\alpha_0+\alpha_1x+\alpha_2y)^m
 \qquad\text{or}\qquad
 p=(\alpha_0+\alpha_1x)^r(\beta_0+\beta_1y)^s,
\]
with $m,r,s>0$ and all displayed coefficients nonzero.  Conversely, every such
polynomial satisfies $\mu(p)\mid p^3$.
\end{mainthm}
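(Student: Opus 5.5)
The converse is a direct computation, so I do it first. For $p=\ell^m$ with $\ell=\alpha_0+\alpha_1x+\alpha_2y$, the log-Hessian in $\Theta$-coordinates is $m$ times the log-Hessian of $\ell$. For $\ell$ one checks $\det\Theta^2\log\ell=\alpha_0\alpha_1\alpha_2xy/\ell^3$. Hence
\[
\mu(p)=p^3\cdot m^2\alpha_0\alpha_1\alpha_2xy\,\ell^{-3}=m^2\alpha_0\alpha_1\alpha_2\,xy\,\ell^{3m-3},
\]
which divides $p^3$ up to a unit. For the product case the Hessian is diagonal with entries $r\alpha_0\alpha_1x/(\alpha_0+\alpha_1x)^2$ and $s\beta_0\beta_1y/(\beta_0+\beta_1y)^2$. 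This gives $\mu(p)=rs\alpha_0\alpha_1\beta_0\beta_1xy\,(\alpha_0+\alpha_1x)^{3r-2}(\beta_0+\beta_1y)^{3s-2}$, which again divides $p^3$.

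For the forward direction I rely on heredity \cite{DiScalaSombra2025}: every edge polynomial of $p$ satisfies the one-variable GEC. In rank one, $\mu(q)=q^2\,\Theta^2\log q=q\Theta^2q-(\Theta q)^2$. If $\mu(q)\mid q^N$, then at each root $\rho$ of $q$ with multiplicity $k$, the vanishing order of $\mu$ is $2k-2$, so $\mu$ can only vanish at roots of $q$. A standard count then forces $q$ to be a unit times $(\beta_0+\beta_1 t)^k$ in the primitive edge variable $t$: compare the degree of $\mu$ with the sum of the orders $2k_i-2$, or use that $\Theta^2\log q$ is a sum of $-\rho_i t/(t-\rho_i)^2$ terms. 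Combined with unimodularity at each vertex, every edge of $P=\NP(p)$ then carries a binomial power whose lattice length equals its exponent. Moreover, at every vertex the two incident edge directions form a lattice basis, so $P$ is a smooth lattice polygon. The goal is to show that $P$ is either a dilated unimodular triangle $m\Delta$ or a rectangle $r\square\times s\square$ after a monomial change, and that $p$ is then determined.

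The key step is the global analysis. Write $\mu(p)=u\prod_i f_i^{e_i}$, where the $f_i$ are the irreducible factors of $p=\prod f_i^{k_i}$. I compare Newton polygons: $\NP(\mu(p))\subseteq 3P$ generically, more precisely $\NP(\mu(p))=\sum e_i\NP(f_i)$ up to a translate, and each vertex of $P$ contributes an explicit vertex term to $\mu$. The vertex term is nonzero by unimodularity, since the $2\times2$ determinant of the two edge vectors is $\pm1$. Matching the edge data of both sides shows that each $f_i$ has Newton polygon a segment or a unimodular triangle. In particular each $f_i$ is a binomial or an affine trinomial in suitable coordinates, and the edge factorization pins down their directions. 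Then I use the local orders $\ord_{f_i}\mu=2k_i-2$ (or $3k_i-3$) to compute $\deg$ and the mixed area of $\NP(\mu)$ in two ways. The resulting identity of the type $\text{Area}(P)$ versus the number of edges should rule out polygons with four or more edges except the parallelogram split into two binomial factors.

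I expect the main obstacle to be this global step. The edge identities alone allow hexagons, as the statement warns. So I would try to obtain a genuinely two-dimensional constraint by expanding $\mu(p)$ along the facial grading near a vertex: in a unimodular corner chart $p=c+c_1x+c_2y+\dots$, compute the lowest-order terms of $\mu$ and show they force $p$'s next layer to be a function of the edge binomials. I would then propagate this around $P$ and argue by the irreducible factorization that $p$ equals the product of its edge factors with compatible multiplicities, which leaves only the two listed shapes.
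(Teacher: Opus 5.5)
\textbf{Verdict: there is a genuine gap. The forward direction's global step is never actually argued, and the local orders you propose for it are partly wrong.}

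What works: your converse computations are correct. The rank-one step for edge polynomials is also fine, since unimodularity pins the Newton length of $\mu(q)$ at $2L-2$.

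The first problem is the claim that ``matching the edge data'' shows every irreducible factor $f_i$ has a segment or a unimodular triangle as Newton polygon. This does not follow from anything you set up. Edge information is compatible with smooth hexagons, as you note yourself, and the claim is most of the theorem.

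The second problem is the local orders you want to feed into an area count.
\begin{itemize}
\item For a binomial factor $h$ of multiplicity $k$ in rank two, $\ord_h\mu(p)=3k+\ord_h\delta(p)$. The value $\ord_h\mu(p)=3k-2$ (not $2k-2$) holds only when the cofactor does not restrict to a monomial on $\{h=0\}$.
\item For $p=(1+x)(1+x+y)$ one has $\mu(p)=xy(1+x)^2(2+2x+y)$.
\item For $p=(1+x)(y+(1+x)^k)$ the order of $1+x$ in $\mu(p)$ is $k+1$. So the defect $3k-\ord_h\mu(p)$ of a binomial factor can be $1$, $0$, or negative.
\item The value $3k-3$ for a two-dimensional irreducible factor is correct, but it needs a proof that $f\nmid\mu(f)$. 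For instance: if $f\mid\mu(f)$, the logarithmic Gauss map $[\Theta_1f:\Theta_2f]$ is constant along $f=0$, so $f\mid\alpha\Theta_1f+\beta\Theta_2f$, which forces a one-dimensional support.
\end{itemize}

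The third problem is that the ``identity of the type $\mathrm{Area}(P)$ versus the number of edges'' is never written down. Your corner-chart propagation, as sketched, yields only local edge and next-slice information. You give no mechanism by which going around $P$ would exclude four, five, or six sides.

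The missing idea is a way to turn factor orders into edgewise constraints whose signs you can control. Your own vertex-term observation, together with $\NP(\mu(p))=\sum_i e_i\NP(f_i)$, supplies the first half. Let $E$ be an edge of lattice length $L$, and let $C_E$ be the corresponding boundary curve of the toric surface $X_P$. The two vertex terms $3v+u_1+u_2$ at the ends of $E$ are the endpoints of the face of $\NP(\mu(p))$ exposed by the normal of $E$. This gives $\ell_E(\NP\mu(p))=3L-2-C_E^2$, and hence the signed ledger
$2+C_E^2=\sum_i(3k_i-e_i)\,\ell_E(\NP f_i)$.

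The paper combines this ledger with three further inputs:
\begin{itemize}
\item the common-sign relation $C_i^2=(1-\rho)L_i$, coming from Di Scala--Sombra's constancy of $L'/L$;
\item Noether's formula $\sum_iC_i^2=12-3\sigma$;
\item the fact that if all $C_i^2<0$, at least three boundary curves are $(-1)$-curves.
\end{itemize}

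For $\sigma\ge6$ the argument runs as follows. Each $(-1)$-edge gives $1=3A_E+r_E$, so reduction mod $3$ forces a binomial factor parallel to it. Two of these binomials are nonparallel, hence transverse to each other, hence of defect exactly $2$. That yields $1=3A_E+2$, which is impossible.

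The remaining cases use the same relations. For $\sigma=5$ the sum $-3$ cannot be reached by five terms of a common sign. For $\sigma=3,4$ the relations give $m\Sigma_2$ or a parallelogram. The ledger with $C_E^2=1$ or $0$ then recovers the factors.

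Your proposal has no substitute for this mod-$3$ and transversality bookkeeping, so as written it does not reach the classification.
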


The proof rests on a signed factor-valuation ledger.  Write
$p=\prod_\nu f_\nu^{m_\nu}$ with pairwise nonassociate irreducible $f_\nu$, so
that GEC gives $\mu(p)=\prod_\nu f_\nu^{e_\nu}$, both up to Laurent units, and
put $r_\nu=3m_\nu-e_\nu$.  For every edge $E$ of $P=\NP(p)$, with invariant
curve $C_E$ on the toric surface $X_P$, Proposition~\ref{prop:ledger} gives
\[
 2+C_E^2=\sum_\nu r_\nu\,\ell_E\bigl(\NP(f_\nu)\bigr),
\]
where $\ell_E(Q)$ is the lattice length of the face of $Q$ exposed by the
inner normal of $E$.  The defects $r_\nu$ have no sign in general, but exact
valuations show that two-dimensional irreducible factors have defect three,
while binomial factors meeting the rest of $p$ transversely have defect two
(Section~\ref{sec:valuations}).  If $P$ has at least six sides, then $X_P$
has at least three invariant $(-1)$-curves.  Modulo three, the ledger forces a
binomial factor parallel to each of the corresponding edges; two of these
binomials are nonparallel, hence transverse with defect two, and the ledger at
a $(-1)$-edge becomes $1=3A_E+2$ with an integer $A_E\ge0$, which is
impossible.  Noether's formula and a common-sign relation for the
self-intersections exclude the remaining polygons other than triangles and
parallelograms, and the ledger then recovers the factors
(Section~\ref{sec:polygon}).

Theorem~\ref{thm:intro-surface} also connects with algebraic statistics.  In
the unimodular rank-two setting, GEC implies maximum-likelihood degree one but
not conversely (Proposition~\ref{prop:gec-ml-degree}).  The theorem is
therefore related to, but distinct from, the classification of polygons with
linear precision
\cite{GarciaPuenteSottile2010,BothmerRanestadSottile2010,ClarkeCox2020}.

\subsection{Compact toric K\"ahler--Einstein rigidity}

We now return to the geometric question that motivated the classification.
Throughout, we use the convention
$[\omega_{\FS}]/2\pi=c_1(\mathcal O_{\mathbb P^s}(1))$, so that
$\Ric(\omega_{\FS})=(s+1)\omega_{\FS}$.  In the following theorem the induced
metric is not assumed to be invariant under the given torus, and the
immersion is not assumed to be defined by a complete linear series.

\begin{mainthm}[Compact toric K\"ahler--Einstein rigidity]
\label{thm:intro-global}
Let $X$ be a smooth compact toric manifold of complex dimension $d$, and let
$\varphi\colon X\to\mathbb P^s$ be a full holomorphic immersion such that
$\omega=\varphi^*\omega_{\FS}$ satisfies $\Ric(\omega)=\lambda\omega$ for some
$\lambda\in\R$.  Then $\lambda>0$, and there are positive integers
$n_1,\ldots,n_k$ and $m_1,\ldots,m_k$ with $\sum_jn_j=d$ and
$(n_j+1)/m_j=\lambda$ for every $j$ such that
\[
 X\simeq\prod_{j=1}^k\mathbb P^{n_j},\qquad
 \varphi^*\mathcal O_{\mathbb P^s}(1)\simeq
 \boxtimes_{j=1}^k\mathcal O_{\mathbb P^{n_j}}(m_j).
\]
Moreover $s=N_{\mathbf m}$, where $N_{\mathbf m}+1=\prod_j\binom{n_j+m_j}{n_j}$,
and there are $F\in\Aut^0(X)$ and $U\in\operatorname{PU}(N_{\mathbf m}+1)$ such
that
\[
 \varphi=U\circ\iota_{\mathbf m}\circ F,
 \qquad
 \omega=F^*\Bigl(\bigoplus_{j=1}^k m_j\omega_{\FS,j}\Bigr),
\]
where $\iota_{\mathbf m}$ is the complete product Veronese--Segre immersion and
$\bigoplus_jm_j\omega_{\FS,j}$ denotes the product metric
$\sum_jm_j\,\mathrm{pr}_j^*\omega_{\FS}$.  Conversely, every such immersion
induces a K\"ahler--Einstein metric.
\end{mainthm}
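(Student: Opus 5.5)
We will reduce Theorem~\ref{thm:intro-global} to the bivariate classification, Theorem~\ref{thm:intro-surface}, through the dense torus orbit and face heredity of the generalized Einstein condition. The conclusion that $X$ is a product of projective spaces will come from a purely combinatorial argument on the moment polytope.

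\textbf{Step 1: normal form on the dense orbit.} By Lemma~\ref{lem:adapted-torus}, after replacing $\varphi$ by $U\circ\varphi\circ F$ with $F\in\Aut^0(X)$ and $U$ unitary, the restriction of $\varphi$ to the dense orbit $(\C^*)^d$ is a monomial map $x\mapsto[\sqrt{c_a}\,x^a]_{a\in A}$. Here $A\subset\Z^d$ and $c_a>0$. The potential is therefore $\log p(|x|^2)$ with $p=\sum_a c_ax^a$.

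The Einstein equation becomes $\mu(p)=Cx^bp^{d+1-\lambda}$ on the positive orthant, hence as Laurent polynomials. Since $\mu(p)$ is a Laurent polynomial, this forces $\mu(p)\mid p^N$, that is, GEC. Fullness of $\varphi$ and compactness of $X$ give three further facts:
\begin{itemize}
\item the convex hull $P=\conv(A)$ is the moment polytope of $\varphi^*\mathcal O(1)$;
\item $P$ is a smooth (Delzant) polytope;
\item $A$ is unimodular, by \cite[Lemma~2.6]{DiScalaSombra2025}.
\end{itemize}

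\textbf{Step 2: two-faces.} By the heredity theorem of Di Scala and Sombra, every two-dimensional face $G$ of $P$ carries a GEC polynomial $p_G$ of effective rank two with unimodular support. Theorem~\ref{thm:intro-surface} then says $p_G$ is, up to a monomial change of coordinates, either a power of an affine trinomial or a product of powers of two binomials. In particular every two-face of $P$ is a unimodular triangle dilated by $m$ or a lattice rectangle $r\Delta_1\times s\Delta_1$.

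\textbf{Step 3: the polytope is a product of simplices.} We must show that a Delzant polytope all of whose two-faces are dilated standard triangles or parallelograms is, up to $GL_d(\Z)$, a product $\prod_j m_j\Delta_{n_j}$. We expect this to be the main obstacle.

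At a vertex $v$, smoothness gives $d$ primitive edge directions $e_1,\dots,e_d$ forming a lattice basis. Each pair $\{e_i,e_j\}$ spans a two-face, which is either a triangle or a parallelogram. We will show the relation ``$e_i,e_j$ span a triangle'' is an equivalence relation, by examining three-faces. Those three-faces are simple 3-polytopes whose 2-faces are triangles and parallelograms with the prescribed edge lengths, so they are $m\Delta_3$, $m\Delta_2\times r\Delta_1$, or a box.

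A propagation argument along edges then follows. Edge lengths are constant on the parallel classes forced by the parallelograms, and the triangle classes give simplex factors. Together these identify $P$ near every vertex with the local model of a product of simplices. A connectedness argument over the edge graph then yields $P\cong\prod_jm_j\Delta_{n_j}$, hence $X\simeq\prod\mathbb P^{n_j}$ and $\varphi^*\mathcal O(1)\simeq\boxtimes\mathcal O(m_j)$.

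\textbf{Step 4: coefficients and fullness.} The face polynomials from Step 2 determine $p$ on every two-face. From this we would try to show that $p$ itself factors as $\prod_j(\alpha_{j0}+\sum_q\alpha_{jq}x_{jq})^{m_j}$. One route is induction on $d$ using the ledger along facets; another is the multiplicativity of $\mu$ on products together with the vanishing of mixed Hessian blocks.

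Rescaling coordinates by $F$ brings $p$ to $\prod_j(1+\sum_q x_{jq})^{m_j}$. This is the Kähler potential of $\bigoplus_j m_j\omega_{\FS,j}$ and has positive coefficients. The monomials $\sqrt{c_a}x^a$ are then exactly the multinomially weighted basis of $\iota_{\mathbf m}$. Thus $\varphi=U\circ\iota_{\mathbf m}\circ F$, and fullness forces $s=N_{\mathbf m}$.

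\textbf{Step 5: Einstein constant and converse.} For the block form, $\det\nabla^2\psi$ equals $\prod_j m_j^{n_j}Z_j^{-(n_j+1)/m_j}$ up to exponential factors, where $Z_j=(1+\sum_q x_{jq})^{m_j}$. The equation therefore requires $(n_j+1)/m_j=\lambda$ for every $j$. In particular $\lambda>0$, which also follows from $\Ric=\lambda\omega$ with $\omega$ positive on a compact manifold that is Fano by the identification. Conversely, the product of $m_j\omega_{\FS}$ has Ricci form $\sum(n_j+1)\omega_{\FS,j}=\lambda\sum m_j\omega_{\FS,j}$.
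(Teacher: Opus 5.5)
\textbf{The genuine gap is Step~4.} Nothing in Steps~1--3 controls the coefficients of $p$ at lattice points of $P$ that lie on no two-face, and for $d\ge3$ such points are unavoidable. For example, when $\lambda=1$ the polytope $P$ is reflexive, and its interior lattice point lies on no proper face: take $(1,1,1)\in[0,2]^3$ for $(\mathbb P^1)^3$ with $\mathcal O(2,2,2)$.

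Neither of your proposed routes reaches these coefficients:
\begin{itemize}
\item Induction over facets via the ledger only sees boundary coefficients.
\item The appeal to multiplicativity of $\mu$ on products and to vanishing mixed Hessian blocks presupposes the product decomposition you are trying to prove.
\end{itemize}
A direct factorization of higher-rank GEC polynomials is listed in the paper as an open problem, and the paper never recovers $p$ algebraically.

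The missing ingredient is analytic uniqueness, and the paper's route is as follows. Once Step~3 gives $X\simeq\prod_j\mathbb P^{n_j}$ and $L\simeq\boxtimes_j\mathcal O(m_j)$:
\begin{itemize}
\item The class identity $c_1(X)=\lambda c_1(L)$ yields $\lambda=(n_j+1)/m_j>0$ directly, so your Step~5 need not pass through a block form of $p$.
\item The metric $\omega_0=\bigoplus_jm_j\omega_{\FS,j}$ then lies in $[\omega]$ with the same Einstein constant, and Bando--Mabuchi uniqueness gives $\omega=F^*\omega_0$ with $F\in\Aut^0(X)$.
\item Calabi rigidity, applied to the two full K\"ahler immersions $\varphi$ and $\iota_{\mathbf m}\circ F$, gives $s=N_{\mathbf m}$ and $\varphi=U\circ\iota_{\mathbf m}\circ F$.
\end{itemize}
In your adapted toric frame you could equivalently use uniqueness, up to translation, of the toric real Monge--Amp\`ere solution with gradient image $P$. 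You could then read off $p=e^ax^bp_F$ as in the proof of Corollary~\ref{cor:intro-fixed-torus}. Without one of these inputs, the metric, the immersion, the value of $s$, and your derivation of the matching rule remain unproved.

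Two smaller points.
\begin{itemize}
\item In Step~1, $\mu(p)=Cx^bp^{d+1-\lambda}$ is not an identity of Laurent polynomials when $\lambda\notin\Z$, so GEC does not follow from it as stated. You need that $\gamma p^\lambda$ is a Laurent polynomial, which the paper takes from Di Scala--Sombra, Proposition~3.16. Alternatively, compare logarithmic derivatives of $p^\lambda=Cx^bp^{d+1}/\mu(p)$ along an irreducible factor of $\mu(p)$ not dividing $p$.
\item Your Step~3 takes a different route from the paper. The paper quotes the Wiemeler/Yu--Masuda criterion for the combinatorial type and then proves Lemma~\ref{lem:lattice-untwisting}. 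A self-contained argument is feasible here because the lattice shapes of the two-faces are already known. Compare $P$ with the product of simplices spanned at one vertex by its edge vectors, and propagate along the edge graph. Your tetrahedron/prism/cube list of $3$-faces is what transports the triangle/parallelogram types of two-faces not containing the edge just traversed. As written, however, this bookkeeping is asserted rather than carried out.
\end{itemize}
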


The proof, given in Section~\ref{sec:global}, runs as follows.  After the
torus is adapted to the metric, a reduction of Di Scala and Sombra
\cite[Lemma~2.8]{DiScalaSombra2025} makes the immersion toric and its
polynomial GEC, so Theorem~\ref{thm:intro-surface} applies to every two-face
of the polarization polytope $P$.  A combinatorial
criterion of Wiemeler and Yu--Masuda \cite{Wiemeler2015,YuMasuda2021} and a
lattice-untwisting lemma then identify $P$ with $\prod_jm_j\Sigma_{n_j}$,
where $\Sigma_n=\conv(0,e_1,\ldots,e_n)$; Bando--Mabuchi uniqueness
\cite{BandoMabuchi1987} and Calabi rigidity \cite{Calabi1953} give the metric
and the immersion.  If $\omega$ is invariant under
the given torus, even a possibly sparse monomial immersion is forced to be the
complete product system (Corollary~\ref{cor:intro-fixed-torus}).

Theorem~\ref{thm:intro-global} proves the compact toric case of the
Loi--Zedda conjecture.  It also yields rigidity for projectively induced cscK
metrics, regular quantizations, and homothetic balanced metrics on the
positive first-Chern ray, and for toric K\"ahler--Ricci solitons in projective
space (Section~\ref{sec:global}).  Finally, combined with Hulin's completion
theorem \cite{Hulin1996} and the torus-extension step formulated by Manno and
Salis \cite{MannoSalis2024}, it gives the local classification that they
conjectured.

\begin{maincor}[Local and univalent $T^d$-invariant classification]
\label{cor:intro-manno-salis}
Let $(Y,\omega)$ be a connected K\"ahler $d$-fold with an effective
holomorphic Hamiltonian $T^d$-action that has a fixed point, and let
$\varphi\colon(Y,\omega)\to(\mathbb P^s,\omega_{\FS})$ be a full K\"ahler
immersion with $\Ric(\omega)=\lambda\omega$, $\lambda>0$.  Then the germ of
$(Y,\omega)$ at the fixed point is holomorphically isometric to a germ of
$\bigl(\prod_j\mathbb P^{n_j},\bigoplus_jm_j\omega_{\FS,j}\bigr)$ with
$\sum_jn_j=d$ and $(n_j+1)/m_j=\lambda$.  If $\varphi$ identifies $Y$ with an
embedded submanifold of $\mathbb P^s$ (univalence), then all of $(Y,\omega)$
is holomorphically isometric to an open subset of this product.
\end{maincor}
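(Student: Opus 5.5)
The plan is to reduce the local statement to the compact toric case of Theorem~\ref{thm:intro-global}. The reduction goes through Hulin's completion theorem and the torus-extension step of Manno and Salis.

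The first step is to use the fixed point $y_0$ and the Hamiltonian $T^d$-action to linearize the picture near $y_0$. By Calabi's rigidity \cite{Calabi1953}, the full Kähler immersion $\varphi$ is determined up to $\operatorname{PU}(s+1)$ by the germ of the Kähler potential. The $T^d$-action on the germ then extends to a unitary action on $\mathbb P^s$ that makes $\varphi$ equivariant. This is the torus-extension step formulated by Manno and Salis \cite{MannoSalis2024}. We diagonalize this unitary action. On the open dense orbit of the complexified torus through a neighbourhood of $y_0$, the immersion then takes the monomial form $z\mapsto[\sqrt{c_a}\,x^a]_{a\in A}$, with $A\subset\Z^d$ the set of weights. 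Because $y_0$ is a fixed point with $d$ linearly independent isotropy weights on $T_{y_0}Y$, the support $A$ has a vertex $v$ at which the neighbouring support points differ from $v$ by a lattice basis, namely the isotropy weights. After a Laurent unit and a monomial change of coordinates, the Einstein equation becomes \eqref{eq:intro-fisher-determinant}, so $p=\sum_a c_a x^a$ satisfies $\mu(p)=Cx^bp^{d+1-\lambda}$.

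The second step is to globalize. Hulin's theorem \cite{Hulin1996} says that a projectively induced Kähler--Einstein metric with $\lambda>0$ extends to a complete, hence compact, Kähler--Einstein manifold. Concretely, I would take the closure $\overline{T_\C\cdot\varphi(y_0)}$ of the orbit in $\mathbb P^s$, which is a projective toric variety $X_P$ with $P=\conv A$. I expect this to be the main obstacle: showing that $X_P$ is smooth, so that the compact theorem applies. The existence of the fixed point only gives unimodularity at one vertex $v$. My first attempt would be Hulin's result, which gives that the real-analytic Einstein metric $\omega$ extends to a complete smooth Kähler--Einstein metric on the normalization of the orbit closure. Smoothness at every torus-fixed point would follow from this, since a metric that is smooth and Kähler near a fixed point forces the vertex cone to be unimodular. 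Alternatively, I would apply the Di~Scala--Sombra heredity of GEC to faces, together with Theorem~\ref{thm:intro-surface} on two-faces through $v$. I would propagate the triangle/parallelogram structure edge by edge across $P$ to show that every vertex is unimodular.

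Once $X=X_P$ is a smooth compact toric manifold and $\varphi$ extends to a full immersion $X\to\mathbb P^s$ with $\Ric=\lambda\omega$, Theorem~\ref{thm:intro-global} gives $X\simeq\prod_j\mathbb P^{n_j}$ and $\omega=F^*\bigl(\bigoplus_j m_j\omega_{\FS,j}\bigr)$ with $(n_j+1)/m_j=\lambda$. Restricting to a neighbourhood of $y_0$ gives the germ statement, where the identification is the holomorphic isometry $F$ composed with the local inclusion. For the univalent statement, $\varphi(Y)$ is an embedded connected submanifold of $\mathbb P^s$ that agrees with $\varphi(X)$ near $\varphi(y_0)$. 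Both are connected complex submanifolds of the same dimension, and $\varphi(X)$ is closed. By analytic continuation (identity principle), $\varphi(Y)\subset\varphi(X)$ is open. Since $\varphi$ is injective on $Y$ and is an isometry onto its image with the induced metric, $(Y,\omega)$ is holomorphically isometric to an open subset of the product.
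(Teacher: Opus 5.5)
Your route is correct in outline, but it differs from the paper's. The paper never builds an ambient torus. It completes the germ by Hulin's theorem and obtains a compact simply connected $\overline Y$ from Bonnet--Myers and Kobayashi. It then extends the $T^d$ Killing fields by Nomizu's theorem and checks by a periodicity argument that the extended $\R^d$-action descends to an effective Hamiltonian $T^d$-action on $\overline Y$. You instead apply Calabi rigidity to $\varphi$ and $\varphi\circ g$, which are both full K\"ahler immersions because each $g\in T^d$ is a holomorphic isometry. This realizes $T^d$ inside $\operatorname{PU}(s+1)$, much as the paper does for the phase torus in Lemma~\ref{lem:intrinsic-phase-torus}. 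Your version makes the torus action on the completion automatic and avoids Nomizu and the periodicity check. The paper's version avoids weight bookkeeping and any comparison with an orbit closure.

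Two corrections are needed.

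First, $T_\C\cdot\varphi(y_0)$ is a single point because $y_0$ is fixed. You need the orbit $\mathcal O$ of $\varphi(y)$ for a principal point $y$. Near principal points, $\varphi(Y)$ is an open piece of a single $T_\C$-orbit, and the principal stratum is connected, so $\varphi(Y)\subset\overline{\mathcal O}$.

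Second, the obstacle you flag disappears once Hulin's completion is used as the paper uses it, namely as a compact embedded submanifold $\overline Y\supset\varphi(V)$. The set $\overline Y\cap\overline{\mathcal O}$ is an analytic subset of the connected manifold $\overline Y$ and contains an open set, so $\overline Y\subset\overline{\mathcal O}$. Irreducibility and equal dimension then give equality. Hence $\overline{\mathcal O}$ is itself smooth and $T$-invariant. You need no normalization, vertex-cone, or edge-propagation argument. The pre-completion monomial analysis at $v$ is also unnecessary, because Theorem~\ref{thm:intro-global} accepts any full holomorphic immersion of a smooth compact toric manifold.

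Your alternative (b) would fail. GEC, its heredity to faces, and Theorem~\ref{thm:intro-surface} all presuppose unimodular support at every vertex, which is exactly what you would be trying to prove.

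The final univalence step agrees with the paper's.
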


Since the Riemannian Einstein constant of Manno and Salis is twice ours, their
normalization $\Ric(g)=2g$ corresponds to $m_j=n_j+1$.
Corollary~\ref{cor:intro-manno-salis} therefore proves in all dimensions the
germ form of \cite[Conjecture~3]{MannoSalis2024} and the univalent form of
their Conjecture~2, previously known through dimension six
\cite{MannoSalis2026}.  Without univalence the global statement can fail
(Remark~\ref{rem:covering-obstruction}).  In both
Theorem~\ref{thm:intro-global} and Corollary~\ref{cor:intro-manno-salis}, the
new ingredient is Theorem~\ref{thm:intro-surface}: it replaces the
dimension-by-dimension enumeration of the earlier proofs by a single
two-dimensional argument.  The proofs also never rescale the Einstein
constant to one, a step that Di Scala and Sombra observe need not stay within
projective immersions (Remark~\ref{rem:normalization-caveat}); in particular,
the earlier classifications through dimension six follow from
Theorem~\ref{thm:intro-global} without that step.

\subsection{Finite-support exponential families}

We now drop the lattice.  For an arbitrary finite carrier $A\subset\R^d$, as
in statistics, there is no lattice, torus action, or compact manifold to work
with, and \eqref{eq:intro-fisher-determinant} is simply an identity for a
real-analytic function on $\R^d$.  The rigidity nevertheless persists.

\begin{mainthm}[Finite-support Fisher-determinant rigidity]
\label{thm:intro-nef}
Let $A$, $c_a$, $Z$, and $\psi$ be as above, and suppose that
\eqref{eq:intro-fisher-determinant} holds.  Then $\lambda>0$, and there are
positive integers $n_j,m_j$ $(1\le j\le k)$ with $\sum_jn_j=d$ and
$(n_j+1)/m_j=\lambda$, vectors $u_{jq}$ forming a basis of $\R^d$, a vector
$v\in\R^d$, and constants $K,\gamma_{jq}>0$ such that
\begin{equation}
 Z(\theta)=Ke^{\langle v,\theta\rangle}
 \prod_{j=1}^k
 \Bigl(1+\sum_{q=1}^{n_j}\gamma_{jq}e^{\langle u_{jq},\theta\rangle}\Bigr)^{m_j}.
 \label{eq:intro-nef-factorization}
\end{equation}
Conversely, every function of this form with $(n_j+1)/m_j=\lambda$ for all
$j$ satisfies \eqref{eq:intro-fisher-determinant} for suitable $C>0$ and
$b\in\R^d$.  Consequently, the Jeffreys prior of a minimal finite-support
natural exponential family is a proper Diaconis--Ylvisaker conjugate prior if
and only if, after an affine change of sufficient statistic, the family is a
product of multinomial families with $n_j+1$ categories and $m_j$ trials,
where $(n_j+1)/m_j$ is independent of $j$.
\end{mainthm}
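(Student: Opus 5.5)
The plan is to reduce Theorem~\ref{thm:intro-nef} to the lattice case, Theorem~\ref{thm:intro-surface}, by a rational specialization argument, and then to reassemble the global product structure from the faces.

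\textbf{Step 1: faces inherit the equation.} Let $P=\conv A$ and let $F$ be a face with inner normal $w$. Replacing $\theta$ by $\theta+tw$ with $t\to\infty$ and rescaling by $e^{-t\min_A\langle a,w\rangle}$, the function $Z$ converges to $Z_F=\sum_{a\in A\cap F}c_ae^{\langle a,\theta\rangle}$. The Hessian of $\psi$ degenerates only in the $w$ direction. After splitting off that direction, one sees that $Z_F$ satisfies \eqref{eq:intro-fisher-determinant} on the affine span of $F$ with the same $\lambda$; this is the analytic form of the Di Scala--Sombra heredity. In particular, along an edge the equation reduces to the one-dimensional case. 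There, the Morris quadratic-variance classification (binomial type) forces $A\cap E$ to be equally spaced and forces $Z_E$ to be a power $(\alpha+\beta e^{\langle u,\theta\rangle})^{m_E}$ with $2/m_E=\lambda$. This already gives $\lambda>0$.

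\textbf{Step 2: lattice structure and two-faces.} At each vertex $v$, I will use the Hessian of $\psi$ near the vertex asymptotics (letting $\theta$ tend to infinity in the dual cone). The leading term of $\det\nabla^2\psi$ is a Gram-type determinant of the edge steps $u_1,\dots,u_r$ at $v$. Matching its exponential rate against $e^{\langle b,\theta\rangle-\lambda\psi}$ along every ray of the normal cone forces the vertex to be simple ($r=d$). It also shows that every $a\in A$ lies in $v+\sum_i\Z_{\ge0}u_i$; to get this, I will compare the next-order exponents and use that the right side has no extra exponents beyond those of $Z^{-\lambda}$. In this way $A$ lies in an affine lattice $\Lambda$ with unimodular support at every vertex, $\theta$ is an affine reparametrization, and $Z$ becomes a positive Laurent polynomial $p$ on $\Lambda\cong\Z^d$. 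The equation $\mu(p)=Cx^bp^{d+1-\lambda}$ must then be promoted to GEC. The case $\lambda\notin\Z$ or $b\notin\Z^d$ needs separate care. Comparing Newton polytopes and irreducible factors of the polynomial identity, raised to a suitable power, makes $\mu(p)^q=C^qx^{qb}p^{q(d+1-\lambda)}$ an identity of Laurent polynomials whenever $\lambda\in\mathbb Q$. Rationality of $\lambda$ follows from the edge computation $\lambda=2/m_E$. Hence $\mu(p)\mid p^N$, and Theorem~\ref{thm:intro-surface} applies to every two-face.

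\textbf{Step 3: global assembly.} Every two-face of $P$ is now a triangle $m\Sigma_2$ or a parallelogram $r\Sigma_1\times s\Sigma_1$ in lattice coordinates, and $P$ is simple. As in the proof of Theorem~\ref{thm:intro-global}, the Wiemeler/Yu--Masuda criterion together with the lattice-untwisting lemma identifies $P$ with $\prod_j m_j\Sigma_{n_j}$ up to unimodular affine maps. Matching $\lambda=2/m_E$ on edges inside a block and $(n_j+1)/m_j$ from the triangle factors gives the ratio condition. I then have to recover the coefficients. The edge and two-face factorizations say that the restrictions of $p$ to faces are products of powers of affine linear forms in the monomials. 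I will show that $p$ itself equals $Kx^v\prod_j(1+\sum_q\gamma_{jq}x_{jq})^{m_j}$, where the $\gamma$'s are read off from the vertex at $0$. The tool is a uniqueness argument: once Newton polytope and face data are fixed, the equation is a Monge--Amp\`ere equation whose solution (Bando--Mabuchi on the compact toric model $\prod\mathbb P^{n_j}$) is unique up to the torus, so $p$ must be the multinomial product.

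For the converse and the statistical reformulation, a direct computation on $\prod_j(1+\sum_q e^{\theta_{jq}})^{m_j}$ gives $\det\nabla^2\psi\propto\prod_j e^{\sum_q\theta_{jq}}Z_j^{-(n_j+1)/m_j}$. Properness of the Diaconis--Ylvisaker prior is then the standard integrability condition. I expect the main obstacle to be Step 2: extracting a lattice and unimodular simple vertices from a purely real-analytic identity with arbitrary real $A$, before any polynomial algebra is available.
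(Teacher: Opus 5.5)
Your Step~1 is false as stated, and three later steps depend on it. A face sum does not satisfy \eqref{eq:intro-fisher-determinant} with the same $\lambda$. When $\theta$ runs to infinity normal to an edge $E$, $\det\nabla^2\psi$ decays with a coefficient that involves the adjacent parallel slice. Take $Z=(1+x+y)^m$ with $x=e^{\theta_1}$, $y=e^{\theta_2}$, so that $\lambda=3/m$. The edge limit reads $\psi_E''\cdot m(1+x)^{-1}=m^2x(1+x)^{-3}=m^2xZ_E^{-3/m}$, whereas $Z_E=(1+x)^m$ by itself satisfies the one-dimensional equation only with constant $2/m$. This is why only divisibility (GEC) passes to faces, and the Einstein exponent does not. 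Consequently $\lambda=2/m_E$ is wrong, Morris's classification does not apply on edges, and your derivations of $\lambda>0$, of $\lambda\in\mathbb Q$, and of the matching rule all fail. The matching rule as you state it would even force $2/m_j=(n_j+1)/m_j$, excluding the simplex blocks with $n_j\ge2$ that your own converse produces. These are repairable:
\begin{itemize}
\item $\lambda>0$ follows from Lemma~\ref{lem:nef-boundary};
\item the matching rule follows from $c_1(X)=\lambda c_1(L)$, as in \eqref{eq:cohomological-matching};
\item rationality follows from Step~2, as explained below.
\end{itemize}

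Step~2 is where you genuinely depart from the paper, and it can be made rigorous. First multiply by $Z^{d+1}$, so that the left side is the positive Cauchy--Binet sum $W=\sum_B\det(\widetilde a:a\in B)^2\prod_{a\in B}c_ae^{\langle a,\theta\rangle}$ with no cancellation, and the identity becomes $W=Ce^{\langle b,\theta\rangle}Z^s$ with $s=d+1-\lambda$. Comparing support functions gives $s\ge0$; the case $s=0$ forces $|A|=d+1$, and for $s>0$ one gets $\NP(W)=b+sP$.
\begin{itemize}
\item \emph{Simplicity.} Let $B^*$ be a set whose exponent sum is the vertex $b+sv$. It stays optimal on the closed normal cone of $v$. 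An exchange argument at a covector exposing an edge through $v$ shows that $B^*$ contains two points of every such edge. Hence $v$ is simple and $B^*=\{v,v+u_1,\dots,v+u_d\}$.
\item \emph{Lattice.} If $a-v=\sum_i\alpha_iu_i$ with $\alpha_i>0$, swapping $v+u_i$ for $a$ gives the relative exponent $a-v-u_i$ with positive coefficient. So this exponent lies in the monoid generated by $A-v$, which supports the expansion of $Z^s$ at $v$. Induction on a functional positive on the tangent cone then gives $A\subset v+\sum_i\Z_{\ge0}u_i$.
\item \emph{Rationality.} Comparing $b+sv=(d+1)v+\sum_iu_i$ at adjacent vertices gives $\lambda L\in\Z$ for each edge of lattice length $L$ in $\Z(A-A)$.
\end{itemize}

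The paper builds the lattice globally instead. Hulin's completion compactifies the image of $\Phi$, the closure of the phase action is an effective Hamiltonian torus squeezed to dimension $d$, and $A-a_0$ generates its character lattice; the case $d=1$ is handled by an ODE. Your route avoids Hulin's theorem and is essentially a real-carrier version of Lemma~\ref{lem:vertex-certificate}.

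Once you have a Delzant polytope with unimodular support, the monomial map extends to a full embedding of $X_P$: in the chart at $v$ its coordinates include $1,z_1,\dots,z_d$. So you can apply Corollary~\ref{cor:intro-fixed-torus} directly, which makes your GEC promotion and two-face analysis redundant. If you keep your own Step~3, note that Bando--Mabuchi gives uniqueness only up to $\Aut^0(X)$. Reducing to torus rescalings needs the normalizer argument of Lemma~\ref{lem:product-normalizer}.
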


For $d\ge2$, the main new step is to manufacture the missing torus
(Lemma~\ref{lem:intrinsic-phase-torus}).  By Hulin's completion theorem, a
germ of the image of $\Phi$ extends to a compact K\"ahler--Einstein
submanifold of projective space.  The closure of the unitary phase action
$\Phi(z)\mapsto\Phi(z+it)$, $t\in\R^d$, is then an effective Hamiltonian torus
of dimension exactly $d$, and the differences of points of $A$ generate its
character lattice, so Corollary~\ref{cor:intro-fixed-torus} applies.

For statisticians, Corollary~\ref{cor:nef-statistics-vocabulary} restates
Theorem~\ref{thm:intro-nef} in the language of natural exponential families:
the conjugate families of Consonni and Veronese, the generalized variance and
the skewness vector, and the determinant measure of Kokonendji and Seshadri
\cite{KokonendjiSeshadri1996}.  In Casalis's terms, among families generated
by finitely supported measures the two conjugate families coincide exactly for
affine images of matched multinomial products.  Up to affinity and convolution
power there is one such type for each partition of $d$, and only the
one-block multinomial type is simple quadratic (Remark~\ref{rem:nef-types}).
Theorem~\ref{thm:intro-nef} thus settles the finite-support case of Casalis's
question; since the one-dimensional case is classical, what remains open is
the infinite-support case for $d\ge2$.

Two earlier lines of work approach the same question from other directions.
Hassairi reformulated the coincidence of the two conjugate families through
his generalized variance transform, which requires the generalized variance
to be a Laplace transform, as it is for infinitely divisible generating
measures \cite{Hassairi1999}.  As he notes, this fails for the multinomial
family; in fact it fails for every finite support, where the generalized
variance is bounded and nonconstant, and
Corollary~\ref{cor:nef-statistics-vocabulary}(5) supplies the finite-support
analogue.  Ghribi and Masmoudi characterized multinomial families by their
generalized variance under a bounded-support hypothesis
\cite{GhribiMasmoudi2010}.  Section~\ref{subsec:nef-literature} discusses
these and further antecedents.

\subsection{Organization}

Section~\ref{sec:preliminaries} recalls the GEC machinery of Di Scala and
Sombra, and Sections~\ref{sec:ledger}--\ref{sec:polygon} prove
Theorem~\ref{thm:intro-surface}.  Section~\ref{sec:global} proves
Theorem~\ref{thm:intro-global}, Corollary~\ref{cor:intro-fixed-torus}, and the
consequences for cscK, balanced, and soliton metrics.
Section~\ref{sec:finite-nef} proves Theorem~\ref{thm:intro-nef} and relates it
to the statistical literature, and Section~\ref{sec:local} proves
Corollary~\ref{cor:intro-manno-salis}.  Section~\ref{sec:scope} compares our
results with linear precision and with the work of Manno and Salis, and it
closes with limitations and open problems.

The sections need not be read in order.  Readers mainly interested in the
statistical results can go directly to Section~\ref{sec:finite-nef}, which
uses the earlier sections only through Corollary~\ref{cor:intro-fixed-torus}.
Readers mainly interested in the geometry can read
Section~\ref{sec:preliminaries} and then Section~\ref{sec:global}, taking
Theorem~\ref{thm:intro-surface} as given.

\section{Laurent Monge--Amp\`ere polynomials and GEC}
\label{sec:preliminaries}

This section fixes notation for the logarithmic Monge--Amp\`ere polynomial and
collects the results of Di Scala and Sombra used in the rest of the paper.
Apart from the coordinate-free presentation, adapted to faces of Newton
polytopes, the material is theirs.

Let $M\simeq\Z^d$ be a lattice, write $\C[M]$ for its Laurent group algebra,
and denote its characters by $\chi^a$, $a\in M$.  Write a Laurent polynomial
as $f=\sum_{a\in\mathcal A}c_a\chi^a$ with all $c_a\ne0$, so that
$\mathcal A=\supp(f)$.  We use the calligraphic letter $\mathcal A$ for such
lattice exponent supports, to distinguish them from the real carrier
$A\subset\R^d$ of the introduction and Section~\ref{sec:finite-nef}.  Put
\[
 M_f=\Span_\R(\mathcal A-\mathcal A)\cap M,
 \qquad d_f=\operatorname{rank}M_f.
\]
Choose $m_0\in\supp(f)$ and put $\widehat f=\chi^{-m_0}f\in\C[M_f]$.
After choosing a basis of $M_f$, set
\begin{equation}
 \delta(f)=\det\Theta^2\log\widehat f,
 \qquad
 \mu(f)=\widehat f^{d_f+1}\delta(f),
 \qquad \Theta_i=x_i\partial_{x_i}.
 \label{eq:def-mu}
\end{equation}
A basis change lies in $\operatorname{GL}(M_f)$, so the logarithmic Hessian
transforms by congruence and its determinant is unchanged because the basis
matrix has determinant $\pm1$.  A different choice of $m_0$, or multiplication
of $f$ by a scalar and a character, changes $\mu$ only by the corresponding
nonzero Laurent unit.  These facts, together with the power and product
rules for $\mu$, are Definition~3.5 and Lemma~3.8 of
\cite{DiScalaSombra2025}.  We always compute \eqref{eq:def-mu} in the
effective difference lattice.  In particular a face polynomial uses the
dimension of that face, not the dimension of the ambient polytope.  When the
support has already been translated into $M_f$, we suppress the hat.

The Cauchy--Binet expansion shows directly that $\mu(f)$ is a Laurent
polynomial.  If $d_f=d$ and the columns $\widetilde a=(1,a)$ are written in
a lattice basis, then, up to the Laurent unit caused by the chosen support
translation,
\begin{equation}
 \mu(f)=
 \sum_{\substack{B\subset\mathcal A\\ |B|=d+1}}
 \det(\widetilde a:a\in B)^2
 \prod_{a\in B}c_a\chi^a.
 \label{eq:cauchy-binet}
\end{equation}
The measure-theoretic determinant identity underlying
\eqref{eq:cauchy-binet} goes back to
Kokonendji--Seshadri \cite[Theorem~2.2]{KokonendjiSeshadri1996};
\eqref{eq:cauchy-binet} is its finite positive-atomic form, extended
algebraically to arbitrary coefficients.  The Laurent-lattice formulation
and the Newton-polytope consequences used below are Lemma~3.6 and
Theorem~3.10 of \cite{DiScalaSombra2025}.  For positive coefficients no
cancellation occurs in \eqref{eq:cauchy-binet}.

\begin{definition}[{\cite[Definition~3.18]{DiScalaSombra2025}}]
A Laurent polynomial $f$ whose support is unimodular in $M_f$ satisfies the
\emph{generalized Einstein condition} (GEC) if
\begin{equation}
 \mu(f)\mid \widehat f^{\,N}
 \label{eq:def-gec}
\end{equation}
in $\C[M_f]$ for some integer $N>0$.
\end{definition}

Besides the definition, we use three results of Di Scala and Sombra.  First,
GEC is hereditary:
if $f$ has unimodular support and satisfies \eqref{eq:def-gec}, the translated
initial polynomial on every face satisfies GEC in its effective lattice
\cite[Corollary~3.15 and Proposition~3.19]{DiScalaSombra2025}.  Second, a rank-one unimodular GEC
polynomial is, up to a Laurent unit, a power of a binomial
\cite[Proposition~4.1]{DiScalaSombra2025}.  Third, their rank-two initial-part
formula gives the following edge data
\cite[Theorem~3.14, Corollary~3.15, and Proposition~4.3]{DiScalaSombra2025}.

\begin{proposition}[Rank-two edge input]
\label{prop:edge-input}
Let $f$ have effective rank two and unimodular support, and satisfy GEC.  Let $E$ be an
edge of $P=\NP(f)$, of lattice length $L$, and let $E'$ be the nearest inner
lattice-parallel slice, of length $L'$; length zero is allowed.  In a
primitive coordinate $t$ along $E$, the restrictions to $E$ and $E'$ are,
up to Laurent units and nonzero constants,
\begin{equation}
 f|_E=(t+\xi)^L,
 \qquad f|_{E'}=(t+\xi)^{L'}.
 \label{eq:edge-rows}
\end{equation}
Moreover, $L'/L$ is independent of $E$, and the face-initial factorization of
$\mu(f)$ implies
\begin{equation}
 \ell_E\bigl(\NP(\mu(f))\bigr)=2L-2+L',
 \label{eq:mu-edge-length}
\end{equation}
where $\ell_E(Q)$ denotes the lattice length of the face of a lattice polygon
$Q$ exposed by the inner normal of $E$.
\end{proposition}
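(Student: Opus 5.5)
Since this proposition is attributed to Di Scala and Sombra, the plan is to assemble it from the three cited ingredients in the present notation. For the rows~\eqref{eq:edge-rows} and for~\eqref{eq:mu-edge-length} we work in adapted coordinates. By unimodularity at a vertex of $E$, choose a lattice basis so that $E$ lies on the line $\{v=0\}$, $P\subset\{v\ge0\}$, and $t$ is the primitive coordinate along $E$. Write $f=\sum_{j\ge0}v^jf_j(t)$. Then $f|_E=f_0$ and $f|_{E'}=f_1$, where $f_1$ may be zero.

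\textbf{Step 1: the edge row.} Heredity (Corollary~3.15 and Proposition~3.19 of \cite{DiScalaSombra2025}) shows that $f_0$ satisfies GEC in rank one. The rank-one classification (their Proposition~4.1) then gives $f_0=(t+\xi)^L$ up to a unit.

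\textbf{Step 2: the initial part of $\mu$.} Expand $\mu(f)$ via the Cauchy--Binet formula~\eqref{eq:cauchy-binet} and look at its initial part in the inner normal direction of $E$. The lowest $v$-degree terms arise from triples $B$ with two points on $E$ and one point on $E'$; triples with all three points on $E$ vanish because they are collinear. This gives an initial part of $\mu(f)$ equal to $v\cdot\bigl(f_0f_0''f_1\text{-type Wronskian expression}\bigr)$. Concretely, the $\Theta$-Hessian computation of their Theorem~3.14 gives the initial form
\[
 \bigl(f_0\,\Theta_t^2f_0-(\Theta_tf_0)^2\bigr)\,f_1 \quad\text{up to a unit},
\]
and this expression is nonzero because $f_0$ is not a monomial. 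GEC forces this initial form to divide a power of $f_0$. With $f_0=(t+\xi)^L$, the bracket equals $L\xi t(t+\xi)^{2L-2}$, and $t$ is a unit. So divisibility forces $f_1$ to be a power of $(t+\xi)$ times a unit, which is the claim $f|_{E'}=(t+\xi)^{L'}$. If $f_1=0$, the next slice must be used; the paper allows $L'=0$, and we would handle this by noting that unimodularity at the endpoints of $E$ guarantees that the slice $v=1$ is nonempty, so $f_1\ne0$. Reading off the $t$-length of the initial form gives $(2L-2)+L'$, which is~\eqref{eq:mu-edge-length}. This uses the fact that the exposed face of $\NP(\mu)$ is the Newton polytope of the initial form, by multiplicativity of initial forms.

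\textbf{Step 3: independence of $L'/L$.} This is the step we expect to be the main obstacle, and it is where we would invoke their Proposition~4.3. Our plan is to use the adjacent vertex. At a vertex $w=E\cap F$, unimodularity puts the neighbours along $E$ and $F$ at a lattice basis. In local coordinates the slice $E'$ begins next to $w$ and lies along $F$, so its length $L'$ relates to the structure of $f|_F$ near $w$. Comparing the multiplicity of the root $\xi$ from both edges should give $L'_E/L_E=L'_F/L_F$; the ratio then propagates around the boundary cycle of $P$. Alternatively, GEC with exponent of the form $\mu\sim p^{3-\lambda}$ suggests $L'/L=$ constant, since the total degree relation gives $2L-2+L'=(3-\lambda)L+\text{const}$. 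We would first try the vertex-comparison argument from Proposition~4.3 of \cite{DiScalaSombra2025} directly.
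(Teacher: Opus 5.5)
Your Steps 1 and 2 are correct, and they follow the paper's route. The paper gives no argument of its own here: it cites Di Scala--Sombra for heredity, the rank-one classification and the rank-two initial-part formula, and adds only that $2L-2$ is the Newton length of the rank-one $\mu$ of $(t+\xi)^L$. Your Step 2 in fact re-derives that input:
\begin{itemize}
\item in the bordered-Hessian form of $\mu(f)$ every term is $O(v)$;
\item the $v^1$-coefficient is $\bigl(f_0\Theta_t^2f_0-(\Theta_tf_0)^2\bigr)f_1$, and the bracket equals $c^2L\xi\,t(t+\xi)^{2L-2}$ when $f_0=c(t+\xi)^L$;
\item multiplicativity of initial forms turns $\mu(f)\mid f^N$ into divisibility of $f_0^N$ by this initial form.
\end{itemize}
One point should be added. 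Unimodularity at the two vertices of $E$ puts the nearest points of the two adjacent edges into the support, and these are exactly the endpoints of $P\cap\{v=1\}$. So $\NP(f_1)$ is the whole slice. Nonemptiness of the slice alone does not identify the exponent of $f_1$ with the slice length $L'=L-C_E^2$ used later.

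Step 3 is the one place where your text has no argument of its own; it stands only through the citation of Proposition~4.3, which is also all the paper offers. Comparing root multiplicities is not the mechanism, since Step 2 already gives each inner row the root of its edge. Your alternative is also unavailable. GEC gives only $\mu(f)\mid f^N$, not $\mu(f)=c\chi^uf^{3-\lambda}$. Even under that Einstein identity, the count $2L-2+L'=(3-\lambda)L$ gives $L'=(1-\lambda)L+2$, which is not a constant ratio unless all edge lengths agree.

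What does work is comparing one coefficient at a vertex. Put $w=E\cap F$ at the origin with primitive edge directions $(1,0)$ along $E$ and $(0,1)$ along $F$. Write $f=\sum c_{ij}x^iy^j=\sum_jy^jf_j(x)=\sum_ix^ig_i(y)$, and note that $c_{00},c_{10},c_{01}\neq0$ by unimodularity. Apply Step 2 to $E$ and to $F$; the inner rows start at $(0,1)$ and $(1,0)$ because the tangent cone at $w$ is the positive quadrant. This gives
\[
 f_0=c_{00}(1+ax)^{L_E},\qquad f_1=c_{01}(1+ax)^{L'_E},\qquad
 g_0=c_{00}(1+by)^{L_F},\qquad g_1=c_{10}(1+by)^{L'_F},
\]
with $ab\neq0$. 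Read $c_{11}$ off $f_1$ and off $g_1$:
\[
 c_{00}L_Fb\cdot L'_Ea=c_{00}L_Ea\cdot L'_Fb,
\]
so $L'_E/L_E=L'_F/L_F$. This also covers $L'=0$, where both sides vanish. The ratio then propagates around the boundary cycle of $P$. With this, your proof becomes self-contained and no longer needs Proposition~4.3.
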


For completeness, the term $2L-2$ in
\eqref{eq:mu-edge-length} is the Newton length of the rank-one
Monge--Amp\`ere polynomial of $(t+\xi)^L$.  The conditions in
Proposition~\ref{prop:edge-input} are necessary but not sufficient; the rest
of the proof supplies the missing global factorization constraint.

\section{The edge ledger}
\label{sec:ledger}

The proof of Theorem~\ref{thm:intro-surface} begins with a bookkeeping
identity.  It compares, edge by edge, the Newton polygon of $p$ with that of
$\mu(p)$ and records the difference in terms of the irreducible factors of
$p$.  Throughout this section and the next two, $p\in\C[M]$ satisfies the
hypotheses of Theorem~\ref{thm:intro-surface}.  Factor $p$ in the Laurent
UFD:
\begin{equation}
 p=c\chi^u\prod_{\nu=1}^t f_\nu^{m_\nu},
 \qquad m_\nu>0,
 \label{eq:p-factor}
\end{equation}
where the $f_\nu$ are pairwise nonassociate irreducibles.
Because $\mu(p)\mid p^N$ and $p^N\ne0$, we have $\mu(p)\ne0$.
GEC says that every nonunit irreducible factor of $\mu(p)$ is among the
$f_\nu$, and hence
\begin{equation}
 \mu(p)=c'\chi^v\prod_{\nu=1}^t f_\nu^{e_\nu},
 \qquad e_\nu\ge0.
 \label{eq:mu-factor}
\end{equation}
Define the integer defect
\begin{equation}
 r_\nu=3m_\nu-e_\nu.
 \label{eq:defect}
\end{equation}
Note that $r_\nu$ may, a priori, have either sign.

For an edge $E$ of $P=\NP(p)$ and a lattice polygon $Q$ whose normal fan is
refined by that of $P$, let $\ell_E(Q)$ be, as in
Proposition~\ref{prop:edge-input}, the lattice length of the face of $Q$
exposed by the inner normal of $E$.  Newton polytopes add under
multiplication.  Applying exposed-face lengths to
\eqref{eq:p-factor}--\eqref{eq:mu-factor} gives, with $L=\ell_E(P)$ the lattice
length of $E$,
\begin{equation}
 3L-\ell_E\bigl(\NP(\mu(p))\bigr)
 =\sum_\nu r_\nu\ell_E\bigl(\NP(f_\nu)\bigr).
 \label{eq:ledger-pre}
\end{equation}

Let $C_E$ denote the invariant curve of the smooth toric surface $X_P$
corresponding to $E$.  The standard wall relation gives
\begin{equation}
 L'=L-C_E^2.
 \label{eq:inner-selfint}
\end{equation}
To fix the sign convention, put $E=[(0,0),(L,0)]$ with $P$ locally above
the $x$-axis.
If the primitive outgoing directions of the two neighboring edges are
$(\alpha,1)$ and $(\beta,1)$, then $L'=L+\beta-\alpha$, while the adjacent
normal relation gives $C_E^2=\alpha-\beta$.  This proves
\eqref{eq:inner-selfint} with our sign convention.  Substituting
Proposition~\ref{prop:edge-input} into \eqref{eq:ledger-pre} yields our basic
identity.

\begin{proposition}[Signed factor-valuation ledger]
\label{prop:ledger}
For every edge $E$ of $P$,
\begin{equation}
 \boxed{
 2+C_E^2=\sum_\nu r_\nu
 \ell_E\bigl(\NP(f_\nu)\bigr)}.
 \label{eq:ledger}
\end{equation}
\end{proposition}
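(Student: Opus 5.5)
The ledger follows by combining three facts already in place: the pre-ledger identity \eqref{eq:ledger-pre}, the edge formula \eqref{eq:mu-edge-length} of Proposition~\ref{prop:edge-input}, and the wall relation \eqref{eq:inner-selfint}. The only issue is to check that each applies at every edge $E$ of $P$, with the lattice lengths taken in $M$.

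First I would make \eqref{eq:ledger-pre} precise. Taking the exposed face in direction $u_E$ (the inner normal of $E$) is additive under Minkowski sums, and lattice length is additive on parallel segments. Since $\NP(gh)=\NP(g)+\NP(h)$ in the Laurent domain, the functional $\ell_E$ is additive on products, and Laurent units contribute $0$. This requires the normal fans of $\NP(f_\nu)$ and $\NP(\mu(p))$ to be refined by that of $P$. For the $f_\nu$ this holds because each is a Minkowski summand of $P$. For $\mu(p)$ it follows from \eqref{eq:mu-factor}, since $\NP(\mu(p))$ is a Minkowski sum of the $\NP(f_\nu)$. Applying $\ell_E$ to $p^3$ and to $\mu(p)$ and subtracting, we get
\[
3\ell_E(P)-\ell_E(\NP(\mu(p)))=\sum_\nu(3m_\nu-e_\nu)\,\ell_E(\NP(f_\nu)),
\]
which is \eqref{eq:ledger-pre} with $L=\ell_E(P)$.

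Next I would substitute \eqref{eq:mu-edge-length}. This is valid because $p$ has effective rank two and unimodular support, satisfies GEC, and so falls under Proposition~\ref{prop:edge-input}. The left side becomes
\[
3L-(2L-2+L')=L-L'+2 .
\]
Finally, the wall relation \eqref{eq:inner-selfint} gives $L-L'=C_E^2$, so the left side equals $2+C_E^2$. This is \eqref{eq:ledger}.

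The main obstacle is the justification of the wall relation, which needs two inputs.

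\emph{Smoothness of $X_P$.} Unimodularity of the support at the vertices of $E$ makes the primitive edge directions at each vertex a lattice basis, so $X_P$ is smooth there and the adjacent normal relation $u_{E^-}+u_{E^+}=-C_E^2\,u_E$ holds.

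\emph{The sign computation.} In the normalized coordinates $E=[(0,0),(L,0)]$, with neighboring outgoing directions $(\alpha,1)$ at $(0,0)$ and $(\beta,1)$ at $(L,0)$, the slice at height $1$ runs from $x=\alpha$ to $x=L+\beta$. Hence $L'=L+\beta-\alpha$. Computing the inner normals $(1,-\alpha)$ and $(-1,\beta)$ of the neighboring edges, their sum is $(0,\beta-\alpha)=(\beta-\alpha)u_E$, so $C_E^2=\alpha-\beta$. This requires checking that the slice at height $1$ is still bounded by these two edges.

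When $L'=0$, or an adjacent edge has length $1$, the slice may instead meet a further edge. Then I would interpret $L'$ as the length of $P\cap\{y=1\}$ (length zero allowed), which is what Proposition~\ref{prop:edge-input} measures. If the slice degenerates to a single point, the same linear formula remains valid, since the polygon lies in the cone spanned by the two edge directions and the height-$1$ cross-section of that cone is exactly $[\alpha,L+\beta]$.
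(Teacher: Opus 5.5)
Your proposal is correct and follows the paper's own argument: additivity of exposed-face lattice lengths under the factorizations \eqref{eq:p-factor}--\eqref{eq:mu-factor} gives \eqref{eq:ledger-pre}, Proposition~\ref{prop:edge-input} turns its left side into $L-L'+2$, and the wall relation $L'=L-C_E^2$, checked in the same normalized coordinates, finishes the proof. Your tangent-cone observation already gives $P\cap\{y=1\}=[\alpha,L+\beta]\times\{1\}$ in every case, because both endpoints lie on the adjacent edges and those edges have lattice length at least one, so the case distinction in your last paragraph is unnecessary.
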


Equation~\eqref{eq:ledger} is an equality of integers, or equivalently an
edgewise equality in the Grothendieck group of lattice polygons.  Since the
$r_\nu$ may be negative, its right side is not a Minkowski decomposition by
nef polygons.

\section{Valuations along irreducible factors}
\label{sec:valuations}

The ledger becomes useful once the defects $r_\nu$ are known.  We now compute
them for the two kinds of factors that matter: a two-dimensional irreducible
factor always has defect three, and a binomial factor has defect two provided
the rest of $p$ meets it transversely.  An example after
Lemma~\ref{lem:binomial-factor} shows that transversality cannot be dropped.

\begin{lemma}[Two-dimensional factors]
\label{lem:full-factor}
Let $f\in\C[M]$ be irreducible with $\dim\NP(f)=2$.  Then
\begin{equation}
 f\nmid\mu(f).
 \label{eq:f-not-mu}
\end{equation}
Consequently, if $p=f^m g$ with $f\nmid g$, then
\begin{equation}
 \ord_f\delta(p)=-3,
 \qquad \ord_f\mu(p)=3m-3,
 \qquad r_f=3.
 \label{eq:full-defect}
\end{equation}
\end{lemma}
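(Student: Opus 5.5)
The plan is to compute everything from the $2\times2$ logarithmic Hessian, working in the effective lattice $M_f=M\simeq\Z^2$. Write $g=(\Theta_1f,\Theta_2f)^{T}$ and $F=(\Theta_i\Theta_jf)_{i,j}$. Then
\[
\Theta^2\log f=\frac{fF-gg^{T}}{f^{2}}.
\]
For $2\times2$ matrices,
\[
\det(fF-gg^{T})=f^{2}\det F-f\,g^{T}\operatorname{adj}(F)\,g .
\]
Substituting into \eqref{eq:def-mu} gives
\[
\mu(f)=f\det F-g^{T}\operatorname{adj}(F)\,g ,
\]
so that $\mu(f)\equiv -g^{T}\operatorname{adj}(F)g \pmod f$. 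To prove \eqref{eq:f-not-mu} it therefore suffices to show that the logarithmic bordered Hessian $H(f)=g^{T}\operatorname{adj}(F)g$ does not vanish identically on the curve $V(f)\subset(\C^*)^2$.

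\emph{Geometric step.} Since $f$ is irreducible, $V(f)$ is an irreducible curve in the torus, and its smooth points are dense in it. At a smooth point of the torus, $f=0$ and $g=0$ cannot hold simultaneously, because $\Theta_i=x_i\partial_{x_i}$ with $x_i\neq0$. In logarithmic coordinates $x=e^{z}$, a tangent vector to the curve is $(-\Theta_2f,\Theta_1f)$. A direct differentiation shows that the derivative along this vector of the logarithmic Gauss map $[\Theta_1f:\Theta_2f]$ equals $\pm H(f)$, divided by the square of a nonvanishing component of $g$.

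Now suppose $f\mid\mu(f)$. Then $H(f)$ vanishes on $V(f)$, so the logarithmic Gauss map is constant on the connected smooth locus. Hence the preimage of $V(f)$ under $\exp$ is locally a union of pieces of affine lines with one fixed rational or irrational direction. An irreducible algebraic curve in $(\C^*)^2$ whose logarithmic image is locally linear must be a coset of a one-dimensional subtorus. For this I would cite the standard fact that a curve with constant logarithmic Gauss map is a torus translate; the proof uses Ax--Lindemann, or more elementarily Zariski-closure of a one-parameter subgroup orbit. Consequently $f$ is, up to a Laurent unit, a binomial $\chi^{w}-\xi$, and $\NP(f)$ is a segment. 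This contradicts $\dim\NP(f)=2$. The main obstacle is this step: making the passage from constant Gauss map to subtorus coset rigorous, including excluding irrational directions via algebraicity. Everything else is bookkeeping.

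\emph{Consequence.} Let $p=f^{m}g_0$ with $f\nmid g_0$. Then $g_0$ is a unit in the local ring at the generic point of $V(f)$, so $G=\Theta^2\log g_0$ is regular there. Write
\[
f^{2}\,\Theta^2\log p=A+B,\qquad A=m(fF-gg^{T}),\qquad B=f^{2}G .
\]
Using $\det(A+B)=\det A+\det B+\operatorname{tr}(\operatorname{adj}(A)B)$, we get $\det A=m^{2}f\,\mu(f)$, which has $\ord_f=1$ by \eqref{eq:f-not-mu}. The other two terms have $\ord_f\ge2$. Hence $\ord_f\det(f^{2}\Theta^2\log p)=1$, which gives $\ord_f\delta(p)=1-4=-3$. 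Since $\mu(p)=p^{3}\delta(p)$ up to a unit, it follows that $\ord_f\mu(p)=3m-3$, and by \eqref{eq:defect} we get $r_f=3$.
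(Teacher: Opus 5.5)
Your proof is correct. Its computational parts match the paper's. Your identity $\mu(f)=f\det F-g^{T}\operatorname{adj}(F)g$ is the paper's congruence $\mu(f)\equiv-(CB^2+FA^2-2HAB)\pmod f$. Your tangent field $(-\Theta_2f,\Theta_1f)$ is the paper's $T=B\Theta_1-A\Theta_2$ up to sign. Your expansion of $\det\bigl(m(fF-gg^{T})+f^2G\bigr)$ is an explicit version of the paper's remark that mixed determinants have pole order at most two.

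The real difference is the step after the logarithmic Gauss map is shown to be constant. You argue geometrically:
\begin{itemize}
\item the curve contains an open piece of an orbit $t\mapsto x_0e^{tv}$, hence by analytic continuation the whole orbit;
\item the Zariski closure of that orbit is the coset cut out by the characters $\chi^w$ with $\langle w,v\rangle=0$;
\item an irrational $v$ would make this closure all of $(\C^*)^2$, while a rational $v$ identifies $V(f)$ with a subtorus coset, so $f$ is a binomial.
\end{itemize}
This is sound, but it costs analytic continuation and the description of algebraic subgroups of tori.

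The paper stays algebraic. Constancy of $[\Theta_1f:\Theta_2f]$ gives $(\alpha,\beta)\ne(0,0)$ such that $\alpha\Theta_1f+\beta\Theta_2f$ vanishes on the irreducible curve, so $f$ divides it. If the quotient $q$ is nonzero, then $\NP(f)+\NP(q)\subset\NP(f)$ forces $q$ to be a constant $c$; if it is zero, take $c=0$. Comparing coefficients then gives $\alpha a_1+\beta a_2=c$ on $\supp(f)$. A nonzero complex linear form has nonzero real or imaginary part, so this already places the support on a real affine line, contradicting $\dim\NP(f)=2$. Irrational directions never need separate treatment.

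So the paper's route removes the step you flagged as your main obstacle. Yours gives more than the lemma needs: an explicit identification of $V(f)$ as a torus coset.
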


\begin{proof}
Set
\[
 A=\Theta_1f,\quad B=\Theta_2f,\quad C=\Theta_1^2f,
 \quad H=\Theta_1\Theta_2f,\quad F=\Theta_2^2f.
\]
Expanding $f^3\det \Theta^2\log f$ and reducing modulo $f$ gives
\begin{equation}
 \mu(f)\equiv-\bigl(CB^2+FA^2-2HAB\bigr)\pmod f.
 \label{eq:mu-mod-f}
\end{equation}
On the smooth locus of $f=0$, the logarithmic vector field
$T=B\Theta_1-A\Theta_2$ is tangent to the curve, and on the patch $B\ne0$,
\begin{equation}
 T(A/B)=\frac{CB^2+FA^2-2HAB}{B^2}.
 \label{eq:gauss-derivative}
\end{equation}
If $f\mid\mu(f)$, the logarithmic Gauss map
$[\Theta_1f:\Theta_2f]$ is therefore constant on the normalization of the irreducible
curve.  Hence, for some $(\alpha,\beta)\ne(0,0)$,
\begin{equation}
 f\mid \alpha\Theta_1f+\beta\Theta_2f.
 \label{eq:derivative-divisibility}
\end{equation}
If the polynomial on the right vanishes, coefficient comparison immediately
puts $\supp(f)$ on an affine line.  Otherwise its Newton polygon is contained
in $\NP(f)$.  Divisibility and Newton-polytope additivity force the quotient
in \eqref{eq:derivative-divisibility} to be a Laurent monomial; boundedness of
$\NP(f)$ forces its translation vector to vanish.  Coefficient comparison
again says that $\alpha a_1+\beta a_2$ is constant on $\supp(f)$, contradicting
$\dim\NP(f)=2$.  This proves \eqref{eq:f-not-mu}.

Now write
\[
 \Theta^2\log p=m\Theta^2\log f+\Theta^2\log g.
\]
The determinant of the first summand is $m^2\delta(f)$ and has an exact pole
of order three along $f=0$ by \eqref{eq:f-not-mu}.  A mixed determinant using
one entry of $\Theta^2\log g$, which is regular at the generic point of $f=0$,
has pole order at most two.  The leading pole cannot cancel.  Multiplication
by $p^3$ proves \eqref{eq:full-defect}.
\end{proof}

\begin{lemma}[Transverse binomial factors]
\label{lem:binomial-factor}
Let $h$ be an irreducible Laurent binomial and write $p=h^m g$, with
$h\nmid g$.  Let $\bar g$ be the restriction of $g$ to the translated
one-dimensional subtorus $h=0$.  If $\bar g$ is not a Laurent monomial, then
\begin{equation}
 \ord_h\delta(p)=-2,
 \qquad \ord_h\mu(p)=3m-2,
 \qquad r_h=2.
 \label{eq:binomial-defect}
\end{equation}
\end{lemma}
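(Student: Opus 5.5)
Since the binomial $h$ is irreducible, after an integral monomial change of coordinates and multiplication by a unit we may assume $h=x-\xi$ with $\xi\ne0$, i.e.\ $h=0$ is the translated subtorus $\{x=\xi\}$ and $y$ is a coordinate on it. The plan is to compute the Laurent expansion of the logarithmic Hessian of $\log p=m\log h+\log g$ along $x=\xi$ directly. For $h$ alone, $\Theta_1\log h=x/(x-\xi)$ and $\Theta_1^2\log h=-\xi x/(x-\xi)^2$, while all entries involving $\Theta_2$ vanish. So
\[
 \Theta^2\log p=\begin{pmatrix} -m\xi x h^{-2}+G_{11} & G_{12}\\ G_{12} & G_{22}\end{pmatrix},
 \qquad G=\Theta^2\log g ,
\]
where $G$ is regular at the generic point of $h=0$ because $h\nmid g$.

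Expanding the determinant gives
\[
 \delta(p)=-m\xi x\,h^{-2}G_{22}+\det G .
\]
The second term is regular along $h$. Hence $\ord_h\delta(p)=-2$ exactly when $h\nmid G_{22}$, i.e.\ when $G_{22}=\Theta_2^2\log g$ does not vanish identically on $x=\xi$. On that curve, $\Theta_2$ is tangent to the subtorus, so the restriction commutes with $\Theta_2$. Therefore $G_{22}|_{h=0}=\Theta_2^2\log\bar g$, computed on the one-dimensional torus with coordinate $y$.

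The key step is then the rank-one fact that $\Theta_2^2\log\bar g\equiv0$ forces $\bar g$ to be a Laurent monomial. Indeed, $\Theta^2\log\bar g=0$ means $\Theta\log\bar g=y\bar g'/\bar g$ is a constant $c$. Then $\bar g$ is an eigenvector of the Euler operator $y\,d/dy$ on $\C[y^{\pm1}]$, so $c\in\Z$ and $\bar g$ is a multiple of $y^c$. Equivalently, $\mu(\bar g)=\bar g^2\,\Theta^2\log\bar g=\sum_{a<b}(a-b)^2c_ac_b\,y^{a+b}$ by \eqref{eq:cauchy-binet}. Its top coefficient $(a_{\max}-a_{\min})^2c_{a_{\max}}c_{a_{\min}}$ is nonzero whenever $\bar g$ has at least two terms, since the extreme exponents $a_{\min}+a_{\max}$ pair uniquely. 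A small point needs care here: $\bar g$ is defined as $g(\xi,y)$, and it is nonzero because $h\nmid g$. By hypothesis $\bar g$ is not a monomial, so $G_{22}|_{h=0}\neq0$ and $\ord_h\delta(p)=-2$.

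Finally, $\mu(p)=p^3\delta(p)$ up to a unit, and $\ord_h p^3=3m$, so $\ord_h\mu(p)=3m-2$ and $r_h=3m-e_h=2$. The only delicate point, and the expected obstacle, is justifying the initial normalization. An irreducible binomial $\chi^{a}-c\chi^{b}$ has primitive $a-b$, and after a $\operatorname{GL}_2(\Z)$ change it becomes $x-\xi$ times a unit. This change preserves $\mu$ and $\delta$ up to units, and the restriction $\bar g$ is intrinsic up to a monomial change on the subtorus, which also preserves the monomial/non-monomial dichotomy.
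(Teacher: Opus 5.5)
Your proof is correct and is essentially the paper's argument: you normalize $h$ to a one-variable binomial in $x$, isolate the term $-m\xi x h^{-2}G_{22}$ in the determinant, and use that $\Theta_y^2\log\bar g\equiv0$ exactly when $\bar g$ is a monomial. There is one slip, in your redundant Cauchy--Binet aside: $y^{a_{\min}+a_{\max}}$ is neither the top exponent of $\mu(\bar g)$ nor uniquely paired (for support $\{0,1,2,3\}$ its coefficient is $9c_0c_3+c_1c_2$, which can vanish), so you should use instead the top exponent $a_{\max}+a'_{\max}$, with $a'_{\max}$ the second-largest exponent, whose coefficient $(a_{\max}-a'_{\max})^2c_{a_{\max}}c_{a'_{\max}}$ is nonzero---though your Euler-operator argument already suffices without it.
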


\begin{proof}
The exponent difference of an irreducible Laurent binomial is primitive.
After an integral monomial coordinate change and multiplication by a unit,
take $h=1+\alpha x$.  Then
\[
 \Theta^2\log h=
 \begin{pmatrix}
  \alpha x/(1+\alpha x)^2&0\\0&0
 \end{pmatrix}.
\]
Put $B=\Theta^2\log g$.  At the generic point of $h=0$, $B$ is regular and
\begin{equation}
 \det\bigl(m\Theta^2\log h+B\bigr)
 =\det B+m\frac{\alpha x}{(1+\alpha x)^2}B_{22}.
 \label{eq:binomial-determinant}
\end{equation}
The restriction of $B_{22}$ is $\Theta_y^2\log\bar g$.  It vanishes identically
exactly when $\bar g=cy^q$.  Indeed,
\[
 \Theta_y(\Theta_y\log\bar g)=0
\]
makes $y\bar g'/\bar g$ constant, and comparison of Laurent coefficients
gives the claim.  By hypothesis this does not occur,
so \eqref{eq:binomial-determinant} has an exact double pole along $h=0$.
Multiplying by $p^3$ proves \eqref{eq:binomial-defect}.
\end{proof}

The hypothesis in Lemma~\ref{lem:binomial-factor} cannot be dropped.  For
example, $p=(1+x)(1+x+y)$ has complementary factor $1+x+y$, which restricts
to the monomial $y$ on $x=-1$, and $\mu(p)=xy(1+x)^2(2+2x+y)$, so the defect
of $1+x$ is one.  The transverse hypothesis is verified where the lemma is
used, in the proofs of Propositions~\ref{prop:six-exclusion}
and~\ref{prop:rectangle-recovery}.

\begin{lemma}[Directions and roots]
\label{lem:directions}
If an irreducible binomial $h$ divides $p$, its Newton segment is parallel to
an edge of $P$.  The normal fan of $P$ contains the pair of opposite rays
normal to that segment.  For each fixed unoriented edge direction, all
irreducible binomial factors of $p$ are associates.
\end{lemma}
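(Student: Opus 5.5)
We prove the three assertions of Lemma~\ref{lem:directions} in order. All three come from Newton-polytope additivity combined with the edge input of Proposition~\ref{prop:edge-input}.

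\textbf{Parallel edge.} Let $h$ be an irreducible binomial with $h\mid p$. Its Newton polygon is a primitive segment $S$ with direction $w$. Write $p=h\,q$. Additivity gives $P=S+\NP(q)$, a Minkowski sum. Take the linear functional $\pm\ell$ that vanishes on $w$, for either sign. The face of $P$ exposed by $\ell$ is then the sum of the faces of $S$ and of $\NP(q)$ exposed by $\ell$. The face of $S$ exposed by $\ell$ is all of $S$, which is one-dimensional. So the exposed face of $P$ has dimension at least one. It is therefore an edge of $P$, parallel to $w$.

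\textbf{Normal fan.} Doing the above for $\ell$ and for $-\ell$ gives two edges of $P$, both parallel to $S$. Their inner normals are the two opposite rays $\pm\ell$ normal to $S$. Both rays therefore lie in the normal fan of $P$, which is the second claim.

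\textbf{Associates.} Let $h_1,h_2$ be irreducible binomial factors of $p$ in the same unoriented direction. After a monomial coordinate change we may take that direction to be $e_1$. Each then has the form $h_i=c_i\chi^{u_i}(t+\xi_i)$ with $t=x$ and $\xi_i\ne0$, and $h_i$ is associate to $t+\xi_i$. It suffices to show $\xi_1=\xi_2$.

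Let $E$ be the edge of $P$ exposed by $\ell$, where $\ell$ is normal to $e_1$, as found above. Restricting to a face is multiplicative: the initial form of a product is the product of the initial forms. The initial form of $h_i$ on its own exposed face is all of $h_i$, so $t+\xi_i$ divides $p|_E$. Proposition~\ref{prop:edge-input} says $p|_E=(t+\xi)^L$ up to units. Unique factorization in the Laurent ring $\C[t^{\pm1}]$ then forces $\xi_i=\xi$ for both $i$. Hence $h_1$ and $h_2$ are associates.

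The hypotheses of Proposition~\ref{prop:edge-input} hold here: $p$ has effective rank two and unimodular support, and satisfies GEC.

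The one point needing care is the identification of exposed faces under Minkowski sums, applied with a functional that is constant on $S$. This is standard. The rest is bookkeeping.
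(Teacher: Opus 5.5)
Your proposal is correct and follows essentially the same route as the paper. The paper phrases the first two claims through the fact that the normal fan of a Minkowski sum refines the normal fan of each summand, whereas you use exposed faces of $S+\NP(q)$; the two are equivalent. For the third claim the paper does exactly what you do: it exposes $p$ by a normal perpendicular to the segment, notes that the whole binomial survives in the initial product, and concludes from \eqref{eq:edge-rows} and unique factorization that all such binomials share the root $\xi$.
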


\begin{proof}
The normal fan of a Minkowski sum refines the normal fan of each summand.  A
segment has a wall consisting of the two opposite normal rays, so those rays
must occur in the normal fan of $P$.  Expose \eqref{eq:p-factor} by either
normal perpendicular to the segment.  The whole binomial, rather than just
one monomial, appears in the initial product.  By
\eqref{eq:edge-rows}, that initial product is a power of one irreducible
one-variable binomial.  Unique factorization forces every binomial in the
same direction to have the same root, hence to be associate.
\end{proof}

For use in the next section, we package the ledger edge by edge.  For each
edge $E$, define
\begin{equation}
 A_E=\sum_{\dim\NP(f_\nu)=2}\ell_E\bigl(\NP(f_\nu)\bigr)\ge0,
 \qquad
 r_E=
 \begin{cases}
  r_h,&\text{if a binomial factor $h$ is parallel to $E$},\\
  0,&\text{if no such factor exists}.
 \end{cases}
 \label{eq:edge-defect-data}
\end{equation}
The second quantity is well-defined by Lemma~\ref{lem:directions}.  The
ledger and Lemma~\ref{lem:full-factor} therefore give, for every edge,
\begin{equation}
 2+C_E^2=3A_E+r_E.
 \label{eq:edge-defect-ledger}
\end{equation}

\section{Classification of the Newton polygon and the polynomial}
\label{sec:polygon}

We now combine the ledger with the defect computations.  The argument has
three steps: a common-sign relation for the self-intersections of the
boundary curves, the exclusion of polygons with six or more sides, and the
recovery of the factors for the two shapes that survive.

Write the invariant boundary curves of $X_P$ cyclically as
$C_1,\ldots,C_\sigma$, and let $L_i$ be the lattice length of the edge of $P$
corresponding to $C_i$.  Proposition~\ref{prop:edge-input} and
\eqref{eq:inner-selfint} imply
\begin{equation}
 C_i^2=(1-\rho)L_i
 \label{eq:common-sign}
\end{equation}
for a number $\rho$ independent of $i$.  Thus all $C_i^2$ have the same
strict sign unless all vanish.  We also need the following elementary fan
fact.

\begin{lemma}[Minus-one curves]
\label{lem:minus-one}
Let $X$ be a smooth complete toric surface all of whose invariant boundary
curves have negative self-intersection.  Then at least three of them have
self-intersection $-1$.
\end{lemma}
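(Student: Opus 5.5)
I would prove Lemma~\ref{lem:minus-one} by induction on the number $\sigma$ of rays in the fan, using the classical structure theory of smooth complete toric surfaces. Let the primitive ray generators be $v_1,\ldots,v_\sigma$ in cyclic order. Smoothness gives the wall relations
\[
 v_{i-1}+v_{i+1}=-C_i^2\,v_i,
\]
and we write $a_i=-C_i^2\ge1$ under the hypothesis. The plan has two parts: show that at least one $(-1)$-curve exists, and then upgrade this to at least three.

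\textbf{Step 1: some $a_i=1$.} Since every $a_i\ge 1$, the sequence $(a_i)$ has no $0$. I would use the standard relation $\sum_i a_i=3\sigma-12$. This follows from Noether's formula $K^2+\chi_{\mathrm{top}}=12$, with $\chi_{\mathrm{top}}=\sigma$ and $K^2=\bigl(\sum C_i\bigr)^2=\sum C_i^2+2\sigma$. Equivalently, it follows from the classification of smooth complete toric surfaces as blowups of $\mathbb P^2$ or of Hirzebruch surfaces $F_a$. A quick check disposes of small cases: $\mathbb P^2$ has all $C_i^2=+1$, and $F_a$ has a curve of self-intersection $0$, so neither satisfies the hypothesis. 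Hence $\sigma\ge5$. If all $a_i\ge2$, the sum would be at least $2\sigma$, which forces $\sigma\le 12$ and leaves only finitely many cases. I would avoid enumerating them by arguing directly instead. The rays span $\R^2$ and wind once around the origin. The continued-fraction identity
\[
 \begin{pmatrix}a_1&-1\\1&0\end{pmatrix}\cdots\begin{pmatrix}a_\sigma&-1\\1&0\end{pmatrix}=\id
\]
is incompatible with all $a_i\ge2$. Indeed, with all $a_i\ge2$ the successive ratios increase monotonically and the product can never return to the identity. This is the classical fact that $[a_1,\ldots,a_\sigma]$ with all entries at least $2$ never closes up. So some $a_i=1$.

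\textbf{Step 2: upgrade to three.} For the induction, contract a $(-1)$-curve $C_i$. This deletes $v_i$, and the neighbours' values drop: $a_{i\pm1}\mapsto a_{i\pm1}-1$. The result is a smooth complete toric surface with $\sigma-1$ rays. The contracted surface may no longer satisfy the negativity hypothesis, so the induction must run over the more general class of all smooth complete toric surfaces.

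I expect this generalization to be the main obstacle: the counting statement has to be reformulated so that it is preserved under blowup. The cleanest route is the known result (Oda) that every smooth complete toric surface with $\sigma\ge5$ is a toric blowup of one with $\sigma-1$ rays. This gives a pure combinatorial model: the sequence $(a_i)$ is obtained from a base sequence, either $(-1,-1,-1)$ for $\mathbb P^2$ or $(0,a,0,-a)$ for $F_a$, by repeatedly inserting a $1$ between two neighbours and increasing each of them by $1$.

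In this model, the desired claim becomes: if all entries are at least $1$, then at least three entries equal $1$. I would argue that each base sequence has entries at most $0$ somewhere. Each insertion creates a new entry equal to $1$ and increases only its two neighbours. An entry equal to $1$ is destroyed only when an insertion occurs adjacent to it, and that insertion creates a new $1$ in its place. So a $1$ survives locally. The count of entries that are at most $1$ can therefore be tracked.

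Concretely, I would show by induction on $\sigma$ the stronger invariant: the number of indices with $a_i\le1$ is at least $3$. This holds for the base cases: $\mathbb P^2$ gives $-1,-1,-1$, and $F_a$ gives at least three entries among $0,0,\pm a$. An insertion replaces the pair $(a_{i},a_{i+1})$ by $(a_i+1,1,a_{i+1}+1)$. It can remove at most two low entries and always adds one. The invariant alone therefore does not suffice, and a finer argument is needed.

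The finer argument uses the fact that all entries are eventually at least $1$. Every original nonpositive entry had to be raised by insertions on its sides. The last insertion adjacent to each such region leaves a fresh $1$ beside it. The base configurations contain at least three separated nonpositive or low regions, namely three for $\mathbb P^2$ and the two $0$'s together with the $-a$ entry for $F_a$. Each of these regions leaves behind at least one $1$, and these $1$'s are distinct. This yields three $(-1)$-curves.
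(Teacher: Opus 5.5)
\textbf{Gap in Step~2.} The decisive last sentence of Step~2, that the three low regions leave behind \emph{distinct} $1$'s, is asserted without justification. With the natural assignment (the $1$ created by the last insertion adjacent to a region) it is false. The regions are not separated: in $\mathbb P^2$ the three entries $-1$ are pairwise adjacent, and in $F_a$ the entries $0,-a,0$ are cyclically consecutive. Concretely, start from $(-1,-1,-1)$ and insert successively between the first and second, the second and third, and the third and first entries. This gives $(0,1,0,-1)$, then $(0,1,1,1,0)$, then $(1,1,1,1,1,1)$. The last insertion adjacent to the first original entry and the last insertion adjacent to the third original entry are the same insertion. So your argument yields only two distinct $(-1)$-curves here; the surface has six, but the argument does not see them.

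There is a secondary problem: the fresh $1$ ``beside'' a region need not survive. An insertion on its far side replaces $(e,1,g)$ by $(e,2,1,g+1)$. Some $1$ does survive inside the subdivided cone, but proving that requires the fact that a nontrivial smooth subdivision of a smooth two-dimensional cone cannot have all interior entries $\ge2$, which you do not supply. Separately, in Step~1 the inequality $3\sigma-12\ge2\sigma$ gives $\sigma\ge12$, not $\sigma\le12$. You abandon that route, so the slip is harmless.

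\textbf{Missing idea.} The stronger invariant you wrote down, namely at least three indices with $a_i\le1$ on \emph{every} smooth complete toric surface, is true and has a direct, non-inductive proof; this is the paper's argument. If $a_i\ge2$, the wall relation gives $v_i=\frac1{a_i}v_{i-1}+\frac1{a_i}v_{i+1}+\bigl(1-\frac2{a_i}\bigr)\cdot 0$. So $v_i$ lies in the convex hull of $v_{i-1}$, $v_{i+1}$ and $0$, and is therefore not a vertex of $Q=\conv\{v_1,\ldots,v_\sigma\}$. Completeness puts $0$ in the interior of $Q$, so $Q$ is a polygon with at least three vertices, each of which has $a_i\le1$. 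Under the hypothesis $a_i\ge1$, these are three $(-1)$-curves. This argument makes Step~1, Noether's formula, and Oda's blowup classification unnecessary.
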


\begin{proof}
Let $u_1,\ldots,u_\sigma$ be the cyclic primitive fan generators, oriented so that
$\det(u_i,u_{i+1})=1$, and write $a_i=-C_i^2>0$.  The wall relation is
\begin{equation}
 u_{i-1}+u_{i+1}=a_i u_i.
 \label{eq:wall-relation}
\end{equation}
The origin lies in the interior of
$Q=\conv\{u_1,\ldots,u_\sigma\}$.  If $a_i\ge2$, then
\[
 u_i=\frac1{a_i}u_{i-1}+\frac1{a_i}u_{i+1}
 +\left(1-\frac2{a_i}\right)0,
\]
so $u_i$ is not a vertex of $Q$.  The polygon $Q$ has at least three
vertices, and every corresponding index must have $a_i=1$.
\end{proof}

\begin{proposition}[Exclusion of six or more sides]
\label{prop:six-exclusion}
A Newton polygon under the hypotheses of
Theorem~\ref{thm:intro-surface} has fewer than six sides.
\end{proposition}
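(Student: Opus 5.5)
Suppose $P$ has $\sigma\ge6$ sides; we derive a contradiction from the edge ledger \eqref{eq:edge-defect-ledger}. The first step fixes the sign of the self-intersections. For a smooth complete toric surface, $\sum_i C_i^2=12-3\sigma$ by Noether's formula (equivalently, by the wall relations \eqref{eq:wall-relation}). For $\sigma\ge6$ this sum is at most $-6<0$. By \eqref{eq:common-sign}, all $C_i^2$ share a strict sign, so all are negative. Lemma~\ref{lem:minus-one} then gives at least three invariant curves $C_i$ with $C_i^2=-1$.

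The second step reads the ledger at a $(-1)$-edge $E$. There \eqref{eq:edge-defect-ledger} says $1=3A_E+r_E$ with $A_E\ge0$. If $E$ had no parallel binomial factor, then $r_E=0$ and $1=3A_E$, which is impossible. So each $(-1)$-edge has a parallel irreducible binomial factor $h_E$ with $r_{h_E}\equiv1\pmod 3$.

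Next we need two of these binomials in nonparallel directions. Among three $(-1)$-edges, at most two are parallel, since a convex polygon has at most two edges with a given unoriented direction. Hence there are $(-1)$-edges $E_1,E_2$ with nonparallel binomial factors $h_1,h_2$.

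The third step, which is the main obstacle, is to verify the transversality hypothesis of Lemma~\ref{lem:binomial-factor} for $h_1$. Write $p=h_1^{m}g$. Then $h_2\mid g$, and after a monomial change we may take $h_1=1+\alpha x$ and $h_2=\beta_0+\beta_1x^ay^b$ with $b\ne0$, since $h_2$ is not parallel to $h_1$. The restriction $\bar h_2$ of $h_2$ to the subtorus $x=-1/\alpha$ is then a genuine binomial in $y$, not a monomial. The restriction of the product $g$ is the product of the restrictions of its factors. By unique factorization in $\C[y^{\pm1}]$, $\bar g$ is then divisible by a nonmonomial factor, so $\bar g$ is not a monomial; no factor of $g$ restricts to zero, because $h_1\nmid g$.

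Lemma~\ref{lem:binomial-factor} therefore gives $r_{h_1}=2$. This contradicts $r_{h_1}\equiv1\pmod3$; explicitly, $1=3A_{E_1}+2$ forces $A_{E_1}=-1/3<0$, which is impossible. Hence $\sigma<6$.

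A point to double-check is that $r_E$ in \eqref{eq:edge-defect-ledger} is exactly $r_{h}$ times $\ell_E(\NP(h))=1$. This holds because $h$ is irreducible, so its segment is primitive, and by Lemma~\ref{lem:directions} there is a single binomial factor per direction.
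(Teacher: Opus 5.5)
Your proposal is correct and follows the paper's proof essentially step for step. The chain is the same: Noether's formula with the common-sign relation makes every $C_i^2<0$, Lemma~\ref{lem:minus-one} gives three $(-1)$-edges, and the ledger modulo three forces a parallel binomial at each of them. You then pick two nonparallel binomials, verify transversality by restricting to the subtorus, and let Lemma~\ref{lem:binomial-factor} give $r_h=2$, so $1=3A_E+2$ is impossible.
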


\begin{proof}
For a smooth complete toric surface with $\sigma$ invariant boundary curves,
\begin{equation}
 \sum_{i=1}^\sigma C_i^2=12-3\sigma.
 \label{eq:noether}
\end{equation}
Indeed $c_2(X_P)=\sigma$ and $\chi(\mathcal O_{X_P})=1$, so Noether's formula gives
$K_{X_P}^2=12-\sigma$; expanding
$K_{X_P}^2=(\sum_iC_i)^2$ gives \eqref{eq:noether}.

Suppose $\sigma\ge6$.  The right side of \eqref{eq:noether} is negative, and the
common-sign relation \eqref{eq:common-sign} forces every $C_i^2<0$.
Lemma~\ref{lem:minus-one} supplies at least three edges $E$ with
$C_E^2=-1$.  At such an edge the ledger reads
\begin{equation}
 1=3A_E+r_E.
 \label{eq:minus-one-ledger}
\end{equation}
Here the notation is defined in \eqref{eq:edge-defect-data}; in particular,
Lemma~\ref{lem:directions} shows that no other binomial contributes to this
edge length.

Modulo three, \eqref{eq:minus-one-ledger} forces a binomial factor parallel
to every $(-1)$-edge.  A convex polygon has at most two edges in one
unoriented parallel class, so among the three forced factors choose
nonparallel binomials $h$ and $h'$.  Write $p=h^m g$ with $m$ maximal.  The
factor $g$ contains $h'$.  On $h=0$, the nonparallel binomial $h'$ restricts
to a nonunit Laurent polynomial.  Every remaining factor restricts
nontrivially because $h\nmid g$, and in the one-variable Laurent UFD a
product is a unit only when all factors are units.  Thus $g|_{h=0}$ is not a
Laurent monomial.  Lemma~\ref{lem:binomial-factor} gives $r_h=2$.
Returning to the $(-1)$-edge parallel to $h$, equation
\eqref{eq:minus-one-ledger} becomes
\[
 1=3A_E+2,
\]
which is impossible.
\end{proof}

\begin{proposition}[Triangle factor recovery]
\label{prop:triangle-recovery}
If $P=m\Sigma_2$ under the hypotheses of
Theorem~\ref{thm:intro-surface}, then, up to a Laurent unit,
$p=(\alpha_0+\alpha_1x+\alpha_2y)^m$ with all $\alpha_i\ne0$.
\end{proposition}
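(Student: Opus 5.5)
We use the ledger \eqref{eq:edge-defect-ledger} on the three edges of $P=m\Sigma_2$. The toric surface is $X_P=\mathbb P^2$ and every boundary curve is a line, so $C_E^2=1$ for each edge $E$. The ledger therefore reads
\[
 3=3A_E+r_E\qquad\text{for each of the three edges.}
\]
The three edge directions of $\Sigma_2$ are pairwise nonparallel.

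First we rule out binomial factors. Suppose a binomial $h$ divides $p$. By Lemma~\ref{lem:directions}, the normal fan of $P$ must contain both rays normal to the segment $\NP(h)$. The fan of $m\Sigma_2$ has three rays, and no two of them are opposite. This is a contradiction, so $p$ has no binomial factors and $r_E=0$ for every edge.

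The ledger now gives $A_E=1$ for every edge. Every nonunit irreducible factor of $p$ is either a binomial or two-dimensional. Binomials are excluded, and a one-dimensional Newton polytope with more than two lattice points would factor over $\C$ in one variable, so every $f_\nu$ has $\dim\NP(f_\nu)=2$.

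Next we pin down the factors. Each $\NP(f_\nu)$ is a Minkowski summand of $m\Sigma_2$. A Minkowski summand of a triangle is a homothetic copy, so $\NP(f_\nu)=k_\nu\Sigma_2$ up to translation, with $k_\nu\ge1$. Its exposed length on every edge is $k_\nu$, so $A_E=\sum_\nu k_\nu=1$. Hence there is exactly one irreducible factor $f$, with $\NP(f)$ a translate of $\Sigma_2$ and $m_f=m$. Such an $f$ is, up to a Laurent unit, $\alpha_0+\alpha_1x+\alpha_2y$ with all three coefficients nonzero, since all three vertices must lie in the support. It is automatically irreducible. Thus $p=(\alpha_0+\alpha_1x+\alpha_2y)^m$ up to a unit.

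As a consistency check, Lemma~\ref{lem:full-factor} gives $r_f=3$, and the ledger reads $3=3\cdot1$. As a cross-check on the exponent, the edge restriction $(t+\xi)^m$ from Proposition~\ref{prop:edge-input} agrees.

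The main obstacle I expect is justifying that every Minkowski summand of $m\Sigma_2$ is a lattice multiple of $\Sigma_2$. I would argue this through normal fans: a summand's normal fan is coarsened by that of $P$, so the summand is a polygon with edges only in the three directions of $\Sigma_2$. Its edge lengths satisfy the closing relation for the three primitive edge vectors, which forces all three lengths equal. The case where the summand is a point corresponds to unit factors.
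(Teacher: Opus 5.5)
Your proposal is correct and follows essentially the same route as the paper: you exclude binomial factors because the three-ray fan of $m\Sigma_2$ has no opposite pair, so every nonunit irreducible factor is two-dimensional with defect three, and the ledger $3=3\sum_\nu\ell_E(\NP(f_\nu))$ then forces a single factor with unit edge lengths. Your homothety and closing-relation argument for Minkowski summands of a triangle is a slightly more explicit version of the paper's remark that a full-dimensional normal fan coarsened by the complete three-ray fan must be that same fan, so each factor contributes a positive integer at every edge.
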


\begin{proof}
A segment cannot be a Minkowski summand of a triangle, because its normal fan
requires a pair of opposite rays.  Every nonunit irreducible factor of $p$ is
therefore two-dimensional and has defect three.  Since $C_E^2=1$, the ledger
gives at each edge
\begin{equation}
 3=3\sum_\nu\ell_E(\NP(f_\nu)).
 \label{eq:triangle-ledger}
\end{equation}
A full-dimensional normal fan coarsened by a complete three-ray fan must be
that same fan, so every factor contributes a positive integer at every edge.
Equation~\eqref{eq:triangle-ledger} forces one distinct irreducible factor,
whose three edge lengths are one.  It is a three-term simplex polynomial
$f=\alpha_0+\alpha_1x+\alpha_2y$, and Newton-polytope equality yields
$p=c\chi^u f^m$.
\end{proof}

\begin{proposition}[Parallelogram factor recovery]
\label{prop:rectangle-recovery}
If $P$ is a lattice parallelogram under the hypotheses of
Theorem~\ref{thm:intro-surface}, then, up to a Laurent unit and an integral
monomial change of coordinates,
$p=(\alpha_0+\alpha_1x)^r(\beta_0+\beta_1y)^s$ with $r,s>0$ and all displayed
coefficients nonzero.
\end{proposition}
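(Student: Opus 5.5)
First normalize the polygon.  A lattice parallelogram $P$ with smooth toric surface $X_P$ has unimodular vertex cones, so after an integral monomial change of coordinates and a translation we may take $P=[0,r]\times[0,s]$ with $r,s>0$.  Then $X_P=\mathbb P^1\times\mathbb P^1$, every boundary curve has $C_E^2=0$, and the ledger \eqref{eq:edge-defect-ledger} reads $2=3A_E+r_E$ at every edge.  The plan is to show that there are no two-dimensional factors, so that every factor is a binomial, and then to identify the binomials with $\alpha_0+\alpha_1x$ and $\beta_0+\beta_1y$.

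\emph{Step 1: the factors are binomials in the two edge directions.}  Every Minkowski summand of $P$ has normal fan coarsened by the four-ray fan of the rectangle, so each irreducible factor $f_\nu$ is either a horizontal segment, a vertical segment, or a two-dimensional rectangle.  In the last case $\ell_E(\NP(f_\nu))\ge1$ at all four edges, so $A_E\ge1$ everywhere.  The ledger $2=3A_E+r_E$ then forces $r_E\le -1$ at both horizontal and vertical edges.  That requires binomial factors in both directions.  These two binomials are nonparallel, so Lemma~\ref{lem:binomial-factor} applies exactly as in the proof of Proposition~\ref{prop:six-exclusion}: on $h=0$ the other binomial $h'$ restricts to a nonmonomial, hence $r_h=2$.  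This contradicts $r_E\le-1$.

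So there is no two-dimensional factor, and every nonunit factor is a segment.  By Lemma~\ref{lem:directions}, all horizontal factors are associate to one binomial $h_1$, and all vertical factors are associate to one binomial $h_2$.  Comparing Newton polytopes, $P=\NP(h_1^{a})+\NP(h_2^{b})$ must equal $[0,r]\times[0,s]$.  Both directions must therefore occur, with the multiplicity of $h_1$ times its length equal to $r$, and similarly for $h_2$.

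\emph{Step 2: both binomials are primitive.}  Irreducible binomials have primitive exponent difference, so $h_1=\alpha_0+\alpha_1x$ and $h_2=\beta_0+\beta_1y$ up to units.  This gives $p=c\chi^u(\alpha_0+\alpha_1x)^r(\beta_0+\beta_1y)^s$.  Consistency is automatic: with both present, $r_{h_i}=2$ and $A_E=0$, so the ledger $2=0+2$ holds.

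\emph{Main obstacle.}  The main point needing care is the normalization at the start, namely that a smooth lattice parallelogram is a rectangle up to $\operatorname{GL}_2(\Z)$.  This holds because adjacent primitive edge vectors at a vertex form a lattice basis, and sending them to $e_1,e_2$ makes the parallelogram axis-parallel.  Once that is in place, the transversality needed in Step 1 is immediate, since the two binomials are nonparallel.
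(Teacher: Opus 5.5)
Your proof is correct and uses the same ingredients as the paper's: the ledger $2=3A_E+r_E$ at the $C_E^2=0$ edges, defect two for each binomial because the nonparallel binomial makes it transverse, hence no two-dimensional factors, and Lemma~\ref{lem:directions} together with the side lengths to get the final form. The only difference is the order of the steps. The paper reduces the ledger modulo three to obtain binomials in both directions at once. You instead rule out two-dimensional factors by contradiction and then read off both binomial directions from $\dim P=2$, which is equivalent.
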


\begin{proof}
At an edge in either of the two directions, $C_E^2=0$ and the ledger becomes
\begin{equation}
 2=3A_E+r_E.
 \label{eq:rectangle-ledger}
\end{equation}
Reduction modulo three forces a binomial in each direction.  The complement
of either contains the binomial in the other direction, so
Lemma~\ref{lem:binomial-factor} makes both defects equal to two.  Equation
\eqref{eq:rectangle-ledger} now gives $A_E=0$ on every edge, excluding all
two-dimensional irreducible factors.  Lemma~\ref{lem:directions} leaves one
associate binomial class in each independent direction, and the two side
lengths determine their multiplicities.  This gives the stated form.
\end{proof}

We can now finish the classification.

\begin{proof}[Proof of Theorem~\ref{thm:intro-surface}]
Every full-dimensional polygon has $\sigma\ge3$.
Proposition~\ref{prop:six-exclusion} leaves $\sigma=3,4,5$.  If $\sigma=3$,
equations
\eqref{eq:common-sign} and \eqref{eq:noether} make all three
self-intersections positive, and their sum is $3$; hence each equals $1$.
The fan is that of $\mathbb P^2$, and \eqref{eq:common-sign} makes all three
edge lengths equal.  Therefore, after an integral monomial change of
coordinates, $P$ is a translate of $m\Sigma_2$.

If $\sigma=4$, the sum in \eqref{eq:noether} is zero.  The common-sign alternative
forces $C_i^2=0$ for all $i$, so the fan is the product fan and $P$ is a
lattice parallelogram.  If $\sigma=5$, the sum is $-3$; five negative nonzero
integers would sum to at most $-5$, while the common zero or positive
alternatives are also impossible.  Thus five sides do not occur.

The polynomial normal forms now follow from
Propositions~\ref{prop:triangle-recovery}
and~\ref{prop:rectangle-recovery}.

Conversely, for a three-term unimodular simplex polynomial $f$, formula
\eqref{eq:cauchy-binet} makes $\mu(f)$ a nonzero Laurent monomial.  Since
$\Theta^2\log f^m=m\Theta^2\log f$,
\begin{equation}
 \mu(f^m)=m^2f^{3m-3}\mu(f),
 \label{eq:simplex-mu}
\end{equation}
which divides $(f^m)^3$.  For independent primitive binomials $h_1,h_2$, a
direct determinant calculation gives
\begin{equation}
 \mu(h_1^rh_2^s)=c\chi^q h_1^{3r-2}h_2^{3s-2}
 \label{eq:rectangle-mu}
\end{equation}
for a nonzero $c\chi^q$.  This also divides $p^3$, proving the converse and
the exponent-three assertion.
\end{proof}

\begin{remark}[Why blanket pole counting fails]
The transverse qualifier in Lemma~\ref{lem:binomial-factor} explains why a
naive derivative-order proof cannot replace the classification: there is no
blanket rule assigning defect two to every binomial factor.  For
example, positive polynomials with full lattice-point support on a Delzant
trapezoid, of the form
$(1+x)(y+(1+x)^k)$ can have $(1+x)$-multiplicity $k+1$ in $\mu$, larger than
three when $k\ge3$.  An additional coprime factor prevents GEC.
\end{remark}

\section{The all-dimensional compact toric application}
\label{sec:global}

We now prove Theorem~\ref{thm:intro-global}.  The main difficulty is that
neither the immersion nor the metric is assumed to respect the torus of the
given toric structure.  We therefore begin by conjugating that torus into the
isometry group of the metric, which turns the problem into one about
polytopes (Section~\ref{subsec:adapted-torus}).  We then identify the
polytope (Section~\ref{subsec:integral-product}) and, from it, the metric and
the immersion (Section~\ref{subsec:metric-rigidity}).  The reduction to a
toric immersion already appears in \cite[Lemma~2.8]{DiScalaSombra2025}; we
record the version for the chosen torus because the later polytope and
coefficient bookkeeping needs it.  Section~\ref{subsec:fixed-torus} sharpens
the conclusion when the metric is invariant under the original torus, and
Section~\ref{subsec:consequences} derives consequences for cscK, balanced,
and soliton metrics.

\subsection{Adapted-torus and two-face reduction}
\label{subsec:adapted-torus}

In this first step we replace the given torus by a conjugate one that
preserves the metric, and we show that the resulting immersion polynomial
satisfies GEC.  Put $L=\varphi^*\mathcal O_{\mathbb P^s}(1)$.
By Hulin's positivity theorem for compact projectively induced
K\"ahler--Einstein manifolds, $\lambda>0$ \cite{Hulin2000}.
The cohomological Einstein
identity $c_1(X)=\lambda c_1(L)$ makes $X$ Fano.

\begin{lemma}[Adapted torus]
\label{lem:adapted-torus}
Let $T_c$ be the compact torus of the chosen toric structure.  There is an
$A\in\Aut^0(X)$ for which $A^*\omega$ is $T_c$-invariant.  After a unitary
change of target coordinates, the full immersion
$\varphi_A=\varphi\circ A$ is toric for $T_c$ and has the form
\begin{equation}
 \varphi_A(x)=(\alpha_0\chi^{a_0}(x):\cdots:
                  \alpha_s\chi^{a_s}(x)),
 \qquad
 p=p_{\varphi_A}=\sum_{j=0}^s|\alpha_j|^2\chi^{a_j}.
 \label{eq:toric-immersion-polynomial}
\end{equation}
The exponents $a_j$ are distinct, the coefficients are positive, the support
$S=\{a_0,\ldots,a_s\}$ is unimodular, and $P=\NP(p)$ is the Delzant polytope
of $L$ up to translation.
\end{lemma}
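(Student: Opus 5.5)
The plan has three steps: first conjugate the torus into the isometry group of $\omega$, then use Calabi rigidity to make the immersion monomial, and finally read off the polytope data from the resulting monomial immersion.

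\emph{Step 1: conjugating $T_c$ into the isometry group.} Since $X$ is Fano and $\omega$ is Kähler--Einstein with $\lambda>0$, the Matsushima theorem says that $\Aut^0(X)$ is the complexification of the identity component $K$ of the isometry group of $\omega$. In particular $\Aut^0(X)$ is reductive, and $K$ is a maximal compact subgroup. The torus $T_c$ is a compact subgroup of $\Aut^0(X)$, so by Cartan's conjugacy theorem for maximal compact subgroups there is an $A\in\Aut^0(X)$ with $A^{-1}T_cA\subset K$. Then for $t\in T_c$ we have $t^*A^*\omega=A^*(A^{-1}tA)^*\omega$, where $A^{-1}tA\in K$ and $A$ is holomorphic. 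Hence $A^*\omega$ is $T_c$-invariant, and it is again Kähler--Einstein and projectively induced by $\varphi_A=\varphi\circ A$.

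\emph{Step 2: making $\varphi_A$ toric.} Each $t\in T_c$ is an isometry of $A^*\omega=\varphi_A^*\omega_{\FS}$. Both $\varphi_A$ and $\varphi_A\circ t$ are full holomorphic isometric immersions into $\mathbb P^s$ inducing the same metric. Calabi rigidity then gives a unique $U(t)\in\operatorname{PU}(s+1)$ with $\varphi_A\circ t=U(t)\circ\varphi_A$; uniqueness uses fullness. By uniqueness, $t\mapsto U(t)$ is a continuous homomorphism $T_c\to\operatorname{PU}(s+1)$. Such a map lifts to a homomorphism from a finite cover of $T_c$ into $U(s+1)$, which we can diagonalize simultaneously by a unitary change of coordinates. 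Extending holomorphically to the complex torus $T=T_c^{\C}$, which acts with a dense orbit, we get that on that orbit $\varphi_A$ has the form \eqref{eq:toric-immersion-polynomial}, each coordinate being a weight vector.

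The lift from the projective to the linear action, and the fact that the resulting weights are genuine characters $a_j\in M$, is the step I expect to need the most care. I would handle it as follows. Fix a point $x_0$ of the dense orbit and write $\varphi_A(t\cdot x_0)=U(t)\varphi_A(x_0)$. Because each coordinate of this expression is a holomorphic function on $T$ transforming by a character up to one common factor, dividing by a nonvanishing coordinate yields genuine characters. Absorbing that common factor into the projective class shows that the differences $a_j-a_0$ lie in $M$.

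\emph{Step 3: reading off the data.} Fullness forces the $a_j$ to be distinct. If two coordinates had the same weight, they would be proportional on the dense orbit, a unitary change of coordinates would make one vanish, and $\varphi_A$ would not be full. With $\alpha_j\ne0$, the pullback potential is $\log\sum_j|\alpha_j|^2|\chi^{a_j}|^2$, which gives the positive polynomial $p$. The translated exponents are exactly the lattice points of the sections of $L$ that are used, so $S\subset P_L\cap M$, where $P_L$ is the Delzant polytope of $L$. Since $\varphi_A$ is defined at the $T$-fixed points, each vertex of $P_L$ must occur in $S$, so $\NP(p)=P_L$ up to translation.

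Unimodularity comes from the immersion condition at each fixed point $x_v$. In the affine chart around $x_v$, the local coordinates are the characters $\chi^{w_i}$ along the edge directions $w_i$ at the vertex $v$. The differential of $\varphi_A$ at $x_v$ is injective only if, for each $i$, some $a_j-v$ equals $w_i$ exactly; monomials of higher order vanish to second order at $x_v$. This says precisely that the nearest support points on the edges incident to $v$ differ from $v$ by a lattice basis, which is the unimodularity condition of \cite[Definition~2.5]{DiScalaSombra2025}.
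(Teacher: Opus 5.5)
Your Step 1 is the paper's argument. The paper cites Calabi's theorem that $\operatorname{Isom}^0(X,\omega)$ is maximal compact in the reductive group $\Aut^0(X)$, while you deduce this from Matsushima's complexification statement. Check the direction of conjugation, though: $t^*A^*\omega=(A\circ t)^*\omega=A^*(AtA^{-1})^*\omega$, not $A^*(A^{-1}tA)^*\omega$. So you want $AT_cA^{-1}\subset K$, as in the paper, or replace your $A$ by $A^{-1}$.

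In Steps 2 and 3 you take a more self-contained route than the paper. The paper invokes Lemma~2.7 of Di Scala--Sombra for toricity after a unitary target change, and their Lemma~2.6 for distinct exponents, unimodularity and the polytope. You reprove these directly:
\begin{itemize}
\item Calabi rigidity, with uniqueness of $U(t)$ coming from fullness, gives a projective representation of $T_c$. You lift it through $\operatorname{SU}(s+1)\to\operatorname{PU}(s+1)$, diagonalize, and show the weight differences descend to $M$.
\item Fullness makes the weight spaces one-dimensional.
\item Base-point-freeness at the fixed points puts every vertex of the polytope into $S$.
\item Injectivity of $d\varphi_A$ at $x_v$, with coordinates $\chi^{w_i}$ on the affine chart, forces $v+w_i\in S$, which is unimodularity.
\end{itemize}
These arguments are sound. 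They show exactly where fullness and the immersion hypothesis are used; the paper's citation buys brevity.

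One step is missing, and the paper's proof supplies it explicitly. The coordinate sections of $\varphi_A$ are sections of $\varphi_A^*\mathcal O_{\mathbb P^s}(1)=A^*L$, not of $L$. Your Step 3 therefore identifies $\NP(p)$ with the polytope of $A^*L$, although you write ``sections of $L$.'' The lemma asserts the polytope of $L=\varphi^*\mathcal O_{\mathbb P^s}(1)$, so you need $A^*L\simeq L$. This follows because $\operatorname{Pic}^0(X)=0$ ($X$ is Fano, or simply a compact toric manifold). Hence $\operatorname{Pic}(X)$ is discrete, and the connected group $\Aut^0(X)$ acts trivially on it. With this line added and the conjugation direction corrected, your proof is complete.
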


\begin{proof}
Matsushima's theorem makes $\Aut^0(X)$ reductive in the K\"ahler--Einstein
case \cite{Matsushima1957}.  Let
$K_\omega=\operatorname{Isom}^0(X,\omega)\subset\Aut^0(X)$.  Calabi's
maximal-compact theorem for constant-scalar-curvature K\"ahler metrics says
that $K_\omega$ is a maximal compact subgroup of $\Aut^0(X)$
\cite[Theorem~3]{Calabi1985}.  The compact group $T_c$ lies in a maximal
compact subgroup, and maximal compact subgroups are conjugate.
Hence some $A\in\Aut^0(X)$ satisfies $AT_cA^{-1}\subset K_\omega$.  It
follows that $A^*\omega$ is $T_c$-invariant and is induced by the full
immersion $\varphi\circ A$.

Lemma~2.7 of Di Scala--Sombra makes this immersion toric after a unitary
target change, and their Lemma~2.6 gives all the asserted support properties
\cite{DiScalaSombra2025}.  Finally, $A^*L\simeq L$: on a Fano manifold
$\operatorname{Pic}^0(X)=0$, and the connected group $\Aut^0(X)$ acts
trivially on the discrete Picard group.  Thus $P$ is the polytope of $L$ for
the chosen torus, up to its usual lattice translation.
\end{proof}

The Einstein polynomial identity of Corollary~2.13 and equation~(3.5) of
\cite{DiScalaSombra2025}, applied to $A^*\omega$, is
\begin{equation}
 \mu(p)=c\chi^u p^{d+1-\lambda}.
 \label{eq:einstein-polynomial}
\end{equation}
Their equation~(3.6) and Proposition~3.16 say that
$q=\gamma p^\lambda$ is a Laurent polynomial.  Consequently
\begin{equation}
 p^{d+1}=(c\gamma)^{-1}\chi^{-u}q\,\mu(p),
 \label{eq:original-p-gec}
\end{equation}
so the positive immersion polynomial $p$ in
\eqref{eq:toric-immersion-polynomial} itself satisfies GEC.

\begin{remark}[Normalization caveat]
\label{rem:normalization-caveat}
It is common to rescale a K\"ahler--Einstein form so that its Einstein
constant becomes~$1$.  For a projectively induced metric this replaces $p$ by
$q=\gamma p^\lambda$ as in \cite[(3.6)]{DiScalaSombra2025}.  By
\cite[Proposition~3.16]{DiScalaSombra2025}, $q$ is again a Laurent
polynomial, but its coefficients need not be positive
\cite[Remark~3.17]{DiScalaSombra2025}, so $q$ need not come from a
projective immersion.  As Di Scala and Sombra point out
\cite[Section~3.5]{DiScalaSombra2025}, the rescaling therefore cannot be
assumed to take place within projective immersions, contrary to what might
be inferred from \cite[p.~485]{ArezzoLoiZuddas2012} and
\cite[Section~2.5.7]{MannoSalis2026}.  The proof of
Theorem~\ref{thm:intro-global} avoids this issue: it uses $q$ only as a
Laurent polynomial in \eqref{eq:original-p-gec}, never as an immersion
polynomial, and it proves GEC for the original positive $p$.  Since
Theorem~\ref{thm:intro-global} holds for every Einstein constant, it contains
the classification through complex dimension six of
\cite{ArezzoLoiZuddas2012,MannoSalis2026} without passing through the
rescaling.
\end{remark}

By Proposition~3.19 of \cite{DiScalaSombra2025}, GEC passes to every face
polynomial, so Theorem~\ref{thm:intro-surface} applies to each two-face of
$P$.  Hence every two-face of the polarization polytope is a dilated
unimodular triangle or a lattice parallelogram, even though the original
metric and immersion were not assumed to be toric for the chosen action.

\subsection{From two-faces to an integral product}
\label{subsec:integral-product}

We now know the shape of every two-face of $P$, and we use this information to
recover $P$ itself.  For $d\ge3$, Yu--Masuda state that a simple polytope all of whose two-faces
are triangles or quadrilaterals is combinatorially a product of simplices
\cite[Theorem~2.1]{YuMasuda2021}.  They attribute this criterion to Wiemeler;
compare \cite[Proposition~4.5]{Wiemeler2015}.  The cases
$d=1,2$ are immediate from the preceding classification.  A combinatorial
product need not be an integral-affine product, so the following extra step is
essential.

\begin{lemma}[Lattice untwisting]
\label{lem:lattice-untwisting}
Let $P$ be a Delzant lattice polytope combinatorially equivalent to
$\prod_{j=1}^k\Sigma_{n_j}$.  If every quadrilateral two-face is a lattice
parallelogram, then, up to integral-affine equivalence,
\begin{equation}
 P=\prod_{j=1}^k \ell_j\Sigma_{n_j}
 \label{eq:lattice-product}
\end{equation}
for positive integers $\ell_j$.
\end{lemma}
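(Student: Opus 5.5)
We work at a vertex and rebuild the facet normals. Since $P$ is combinatorially $\prod_j\Sigma_{n_j}$, its facets split into groups $\mathcal F_j$ of size $n_j+1$, one facet for each facet of $\Sigma_{n_j}$. A vertex $v$ is a choice of one omitted facet in each group. Fix such a vertex $v_0$, omitting $F_{j,0}\in\mathcal F_j$. By the Delzant condition, the primitive edge vectors $w_{jq}$ at $v_0$ ($1\le q\le n_j$) form a lattice basis, so we may take $v_0=0$ and $w_{jq}=e_{jq}$. Here the edge $e_{jq}$ is the one leaving facet $F_{j,q}$ and staying in every other facet through $v_0$. The inner normals of the facets through $v_0$ are then the dual basis vectors $e_{jq}^*$, so $P$ lies in the orthant $\{x_{jq}\ge0\}$. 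It remains to show that the missing facets have normals $-\sum_q e_{jq}^*$ with the correct support numbers. That is, each $F_{j,0}$ must be $\{\sum_q x_{jq}=\ell_j\}$.

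To determine the normal $u_{j,0}$ of $F_{j,0}$, use the parallelogram two-faces through $v_0$. For $j\ne j'$, the edges $e_{jq}$ and $e_{j'q'}$ span a two-face $G$. In the product this is an edge of the $j$-th factor times an edge of the $j'$-th, hence a quadrilateral, and by hypothesis a lattice parallelogram. Its vertices are $0$, $\ell e_{jq}$, $\ell' e_{j'q'}$ and $\ell e_{jq}+\ell' e_{j'q'}$. The far side parallel to $e_{j'q'}$ lies on $F_{j,0}$. Hence $\langle u_{j,0},e_{j'q'}\rangle=0$ for all $j'\ne j$: the normal $u_{j,0}$ involves only the $j$-block coordinates.

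Next, within a block, consider the triangle two-faces spanned by $e_{jq}$ and $e_{jq'}$. Every two-face is a triangle or parallelogram (the Theorem~\ref{thm:intro-surface} classification, though here we only need that it is Delzant). The triangle at $v_0$ is a smooth lattice triangle with edge directions $e_{jq}$ and $e_{jq'}$, so the third edge has direction $e_{jq}-e_{jq'}$ up to scaling. Both endpoints of the third edge lie on $F_{j,0}$, which forces $u_{j,0}$ to take equal values on $e_{jq}$ and $e_{jq'}$. Consequently $u_{j,0}=-c\sum_q e_{jq}^*$, and primitivity gives $c=1$. Write $P\subset\R^{n_1}\times\cdots\times\R^{n_k}$ via the basis $e_{jq}$. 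The facet inequalities are then $x_{jq}\ge0$ together with $\sum_q x_{jq}\le\ell_j$, where $\ell_j$ is the support number of $F_{j,0}$. Because $P$ is the intersection of its facet half-spaces, $P=\prod_j\ell_j\Sigma_{n_j}$, and $\ell_j$ is a positive integer because $\ell_je_{j1}$ is a lattice vertex.

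The main obstacle is the bookkeeping of which facet contains which edge or two-face. This is the claim that the far side of the parallelogram lies on $F_{j,0}$, and that in the triangle case the third edge lies in $F_{j,0}$. I would verify it purely combinatorially in $\prod\Sigma_{n_j}$ using the identification of faces with choices of subfaces in each factor. The two-face $G$ is the intersection of all facets through $v_0$ except $F_{j,q}$ and $F_{j',q'}$. Its side opposite to $v_0$ along $e_{jq}$ is $G\cap F_{j,0}$, since in the $j$-th simplex factor the edge from vertex $0$ towards vertex $q$ ends at the vertex omitting facet $0$. The triangle case is identical within one factor. For the cases $d\le2$ or $n_j=1$, the triangle step is vacuous, and the argument reduces to the parallelogram step.
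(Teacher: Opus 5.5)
Your argument is correct, but it rebuilds $P$ from its facet inequalities, whereas the paper rebuilds it from its vertices.

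The paper labels every vertex by a tuple $\alpha$. It then proceeds in three steps:
\begin{enumerate}
\item The parallelogram identity on all quadrilateral two-faces, with an induction on the number of nonzero coordinates, gives $v_\alpha=v+\sum_j w_{j,\alpha_j}$.
\item The Delzant triangles at $v$ show that the edge lengths are constant within each block.
\item $P$ is recovered as the convex hull of its vertices.
\end{enumerate}

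You instead observe that $P$ is cut out by two families of facets. The $d$ facets through $v_0$ have normals supplied by the Delzant basis. The remaining $k$ facets $F_{j,0}$ you fix by $d-1$ independent orthogonality relations on each $u_{j,0}$, read off from two-faces through $v_0$ only: $e_{j'q'}$ for $j'\ne j$ from the parallelograms, and $e_{jq}-e_{jq'}$ from the triangles.

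What this buys you: there is no induction, and the parallelogram hypothesis is needed only at the single vertex $v_0$, so you actually prove a slightly stronger statement. The cost is the face-lattice bookkeeping of which sides lie on $F_{j,0}$, and you handle that correctly.

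Two small points:
\begin{itemize}
\item The triangle step is shared with the paper. Your claim that the third edge is parallel to $e_{jq}-e_{jq'}$ needs the paper's one-line justification. For $\conv(0,ae_{jq},be_{jq'})$ with $g=\gcd(a,b)$, smoothness at $ae_{jq}$ forces $b/g=1$, and smoothness at $be_{jq'}$ forces $a/g=1$.
\item Your closing remark is slightly off for $d=2$. When $P$ is a triangle, it is the parallelogram step that is vacuous and the triangle step that does all the work. Your argument as written still covers this case.
\end{itemize}
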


\begin{proof}
Label the vertices by tuples
\[
 \alpha=(\alpha_1,\ldots,\alpha_k),
 \qquad \alpha_j\in\{0,\ldots,n_j\},
\]
and let $v$ be the all-zero vertex.  Define
\[
 w_{jq}=v_{(0,\ldots,q,\ldots,0)}-v,
 \qquad w_{j0}=0.
\]
Every four-cycle obtained by changing coordinates in two distinct factors is
a quadrilateral two-face.  Its parallelogram identity makes the displacement
in either coordinate independent of the other coordinates.  Induction on the
number of nonzero coordinates yields
\begin{equation}
 v_\alpha=v+\sum_jw_{j,\alpha_j}.
 \label{eq:vertex-additivity}
\end{equation}

Write $w_{jq}=a_{jq}e_{jq}$ with $e_{jq}$ primitive.  The $d$ directions
$e_{jq}$ at $v$ form a lattice basis by the Delzant condition.  For distinct
$q,t$ in one block, the vertices $v$, $v+w_{jq}$, and $v+w_{jt}$ span a
triangular two-face.  If $g=\gcd(a_{jq},a_{jt})$, its third primitive edge
direction is
\[
 \frac{a_{jt}e_{jt}-a_{jq}e_{jq}}{g}.
\]
Smoothness at $v+w_{jq}$ forces $a_{jt}/g=1$, and smoothness at
$v+w_{jt}$ forces $a_{jq}/g=1$.  Hence all $a_{jq}$ in block $j$ have a
common value $\ell_j$.  Equation~\eqref{eq:vertex-additivity}, together with
the lattice basis at $v$, identifies the convex hull of all vertices with
\eqref{eq:lattice-product}.
\end{proof}

Apply Lemma~\ref{lem:lattice-untwisting} to $P$ and rename its side lengths
$m_j$.  Its normal fan is a product of simplex fans, and the
polytope--line-bundle dictionary gives
\begin{equation}
 X\simeq\prod_{j=1}^k\mathbb P^{n_j}
 \label{eq:variety-product}
\end{equation}
and
\begin{equation}
 P\simeq\prod_{j=1}^km_j\Sigma_{n_j},
 \qquad
 L\simeq\boxtimes_{j=1}^k\mathcal O_{\mathbb P^{n_j}}(m_j).
 \label{eq:polarized-product}
\end{equation}

\subsection{Metric and immersion rigidity}
\label{subsec:metric-rigidity}

With the variety and its polarization identified, it remains to determine the
Einstein constant, the metric, and the immersion.  Let $H_j$ be the
hyperplane class pulled back
from the $j$th factor.  With the convention $[\omega]/2\pi=c_1(L)$, the
cohomological Einstein equation is
\[
 c_1(X)=\lambda c_1(L),
\]
and hence
\[
 \sum_j(n_j+1)H_j=\lambda\sum_jm_jH_j.
\]
Therefore
\begin{equation}
 \lambda=\frac{n_j+1}{m_j}\quad\text{for every }j.
 \label{eq:cohomological-matching}
\end{equation}

Set
\begin{equation}
 \omega_0=\bigoplus_jm_j\omega_{\FS,j}.
 \label{eq:product-metric}
\end{equation}
It lies in the class of $\omega$ and, by
\eqref{eq:cohomological-matching}, has the same Einstein constant.  After
rescaling both forms into $2\pi c_1(X)$, Bando--Mabuchi uniqueness
\cite[Theorem~A(ii)]{BandoMabuchi1987} gives $F\in\Aut^0(X)$ such that
\begin{equation}
 \omega=F^*\omega_0.
 \label{eq:bm}
\end{equation}

Let $\iota_{\mathbf m}\colon X\to\mathbb P^{N_{\mathbf m}}$ be the complete product
Veronese--Segre immersion associated to \eqref{eq:polarized-product}, with
the standard multinomial normalization of its coordinates.  Then
$\iota_{\mathbf m}^*\omega_{\FS}=\omega_0$, so
$\iota_{\mathbf m}\circ F$ and $\varphi$ are full K\"ahler immersions of the
same connected K\"ahler manifold.  Calabi rigidity
\cite[Theorem~9]{Calabi1953} gives $s=N_{\mathbf m}$ and a unitary projective
transformation $U\in\operatorname{PU}(N_{\mathbf m}+1)$ such that
\[
 \varphi=U\circ\iota_{\mathbf m}\circ F.
\]
This proves Theorem~\ref{thm:intro-global}; its converse follows from
$\Ric(m_j\omega_{\FS,j})=(n_j+1)\omega_{\FS,j}$ and
\eqref{eq:cohomological-matching}.

\subsection{Fixed-torus coefficient rigidity}
\label{subsec:fixed-torus}

Theorem~\ref{thm:intro-global} identifies the immersion only up to an
automorphism of $X$, because the metric need not be invariant under the torus
we started with.  When it is invariant, that automorphism can be absorbed into
the torus, and we obtain a statement about the coefficients of the immersion
polynomial itself.  This is the form used in Section~\ref{sec:finite-nef}.

\begin{corollary}[Fixed-torus coefficient rigidity]
\label{cor:intro-fixed-torus}
In Theorem~\ref{thm:intro-global}, suppose in addition that $\omega$ is
invariant under the chosen compact torus of $X$.  A unitary target change then
makes $\varphi$ toric.  If $p_\varphi=\sum_{a\in S}w_ax^a$ is its positive
Laurent polynomial, then, after an integral-affine relabeling of the
exponents,
\[
 \NP(p_\varphi)\simeq\prod_{j=1}^k m_j\Sigma_{n_j},
 \qquad
 p_\varphi(x)=C x^{\tau}\prod_{j=1}^k
 \Bigl(1+\sum_{q=1}^{n_j}c_{jq}x_{jq}\Bigr)^{m_j}
\]
for some $C,c_{jq}>0$ and $\tau\in M$.  Consequently $S$ is the full
lattice-point set of its Newton polytope: a possibly sparse toric immersion is
forced to be the complete product Veronese--Segre monomial system.
\end{corollary}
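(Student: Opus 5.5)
Since $\omega$ is already $T_c$-invariant, we can skip the conjugation in Lemma~\ref{lem:adapted-torus} (take $A=\id$). Di Scala--Sombra's Lemma~2.7 then makes $\varphi$ itself toric after a unitary target change, so $\varphi=(\alpha_a\chi^a)_{a\in S}$ and $p_\varphi=\sum_{a\in S}|\alpha_a|^2\chi^a$ with positive coefficients and unimodular support. The proof of Theorem~\ref{thm:intro-global} (two-face classification, Yu--Masuda/Wiemeler, Lemma~\ref{lem:lattice-untwisting}) applies verbatim to this $p_\varphi$. It gives $\NP(p_\varphi)\simeq\prod_jm_j\Sigma_{n_j}$ after an integral-affine relabeling, with $X\simeq\prod\mathbb P^{n_j}$ in coordinates adapted to $T_c$ and $(n_j+1)/m_j=\lambda$.

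Next we identify the metric. The two metrics $\omega$ and $\omega_0=\bigoplus m_j\omega_{\FS,j}$ are both $T_c$-invariant Kähler--Einstein metrics in the same class. By Bando--Mabuchi, $\omega=F^*\omega_0$ for some $F\in\Aut^0(X)$. We want to choose $F$ to commute with $T_c$, which would put $F$ in the complexified torus $T_\C$. The argument runs as follows. $T_c$ and $FT_cF^{-1}$ are both maximal tori in the compact group $K_{\omega}=\operatorname{Isom}^0(\omega)$, since $T_c$ is maximal compact abelian in $\Aut^0(X)$. So after composing $F$ with an isometry of $\omega_0$ conjugating one torus to the other, we may assume $F$ normalizes $T_c$. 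Then $F$ lies in $N(T_c)\cap\Aut^0(X)$, which is $T_\C$ times a finite Weyl-type group. The Weyl part acts by permuting homogeneous coordinates within each $\mathbb P^{n_j}$ block and preserves $\omega_0$, so it can be absorbed, leaving $F\in T_\C$. This normalizer step is the one I expect to need the most care: one has to check that the finite part genuinely consists of $\omega_0$-isometries (it is realized by permutation matrices in $\prod PU(n_j+1)$), and that the integral-affine relabeling of the first step is compatible with the standard product torus coordinates.

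With $F\in T_\C$, acting by $x_{jq}\mapsto t_{jq}x_{jq}$, the Kähler potentials compare on the dense orbit. On one side, $\log p_\varphi$ is a potential of $\omega$. On the other, $\log\prod_j(1+\sum_q|t_{jq}|^2x_{jq})^{m_j}$ (in the real variables $|x|^2$) is a potential of $F^*\omega_0$. Two $T_c$-invariant potentials of the same form on the torus differ by a pluriharmonic $T_c$-invariant function, that is, by an affine function of $\log|x|^2$. Exponentiating gives
\[
p_\varphi=Cx^\tau\prod_j\Bigl(1+\sum_qc_{jq}x_{jq}\Bigr)^{m_j},\qquad c_{jq}=|t_{jq}|^2>0 .
\]
Here $\tau$ is integral because both sides are Laurent polynomials with matching Newton polytopes up to translation.

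The support statement then follows directly. Expanding the product by the multinomial theorem gives strictly positive coefficients on every lattice point of $\prod m_j\Sigma_{n_j}$, so $S$ equals the full lattice-point set. Equivalently, Calabi rigidity forces $s=N_{\mathbf m}$.
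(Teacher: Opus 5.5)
Your proposal is correct and follows essentially the same route as the paper: take $A=\id$, use the already-established product form of $\NP(p_\varphi)$, conjugate $FT_cF^{-1}$ back to $T_c$ by an isometry, use $N_{\Aut^0(X)}(T_c)=T_\C\rtimes\prod_j\mathfrak S_{n_j+1}$, compare the torus-invariant potentials, and finish with integrality of the translation and the multinomial theorem. One small correction: $FT_cF^{-1}$ preserves $\omega_0$, not $\omega$, so both maximal tori should be placed in $\operatorname{Isom}^0(X,\omega_0)$ rather than $\operatorname{Isom}^0(\omega)$ (your next sentence in fact uses this group); absorbing the in-block permutations into $F$ as $\omega_0$-isometries is an equally valid substitute for the paper's integral-affine relabeling.
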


We now prove the corollary, so assume that $\omega$ is invariant under the
chosen compact torus $T_c$.
In Lemma~\ref{lem:adapted-torus} we may take $A=\id$.  Hence the polynomial
$p$ in \eqref{eq:toric-immersion-polynomial} is the immersion polynomial of $\varphi$
for the original action, after a unitary target change.  It is allowed to be
sparse at this stage; equations \eqref{eq:original-p-gec} and
\eqref{eq:polarized-product} have already proved GEC and the product form of
its Newton polytope.  We now recover every coefficient and thereby rule out
all omitted lattice points.

\begin{lemma}[Product automorphisms and torus normalizers]
\label{lem:product-normalizer}
Set $X=\prod_j\mathbb P^{n_j}$ and
$\omega_0=\bigoplus_jm_j\omega_{\FS,j}$.  Then
\begin{equation}
 \Aut^0(X)=\prod_j\operatorname{PGL}(n_j+1,\C),
 \qquad
 \operatorname{Isom}^0(X,\omega_0)=\prod_j\operatorname{PU}(n_j+1).
 \label{eq:product-groups}
\end{equation}
If $T_c$ is the standard compact torus and $T_\C$ its complexification, then
\begin{equation}
 N_{\Aut^0(X)}(T_c)=T_\C\rtimes\prod_j\mathfrak S_{n_j+1}.
 \label{eq:torus-normalizer}
\end{equation}
Every maximal torus of $\operatorname{Isom}^0(X,\omega_0)$ is conjugate to
$T_c$ inside that group.  Permutations of repeated projective factors, when
present in the full automorphism or isometry group, are discrete and amount
to allowed integral lattice relabelings.
\end{lemma}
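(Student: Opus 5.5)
The plan is to derive all four assertions from the structure of $\operatorname{PGL}$ and $\operatorname{PU}$, using the identity component of the automorphism group of a product of projective spaces. For $\Aut^0(X)$ I would use the standard fact that $\Aut^0$ of a product of Fano manifolds with $H^1(\mathcal O)=0$ splits as a product. Concretely, the Lie algebra $H^0(X,T_X)=\bigoplus_j \mathrm{pr}_j^*H^0(\mathbb P^{n_j},T)$ by Künneth, since $T_X=\bigoplus_j\mathrm{pr}_j^*T_{\mathbb P^{n_j}}$ and $H^0(\mathbb P^{n_i},\mathcal O)=\C$. Hence $\Aut^0(X)$ is the connected group generated by $\prod_j\operatorname{PGL}(n_j+1,\C)$, and this product is connected, closed and acts effectively. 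For the isometry group I would argue that the identity component preserves each factor of the product, which follows from the Lie algebra splitting just obtained. It is therefore a compact subgroup of $\prod_j\operatorname{PGL}(n_j+1,\C)$ containing $\prod_j\operatorname{PU}(n_j+1)$, and each $\operatorname{PU}(n_j+1)$ preserves $m_j\omega_{\FS,j}$. Maximality of $\operatorname{PU}$ as a compact subgroup of $\operatorname{PGL}$ (Calabi's maximal-compact theorem already quoted in Lemma~\ref{lem:adapted-torus} would also do) then gives equality. Alternatively, a component-wise argument works: an isometry in $\prod\operatorname{PGL}$ restricts on each slice $\mathbb P^{n_j}\times\{pt\}$ to an isometry of $m_j\omega_{\FS}$.

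For the normalizer, I would reduce to one factor, since the normalizer in a product of the product torus is the product of the normalizers. In $\operatorname{PGL}(n+1,\C)$, an element $g$ normalizing the compact diagonal torus $T_c$ also normalizes its Zariski closure $T_\C$. Thus $g$ permutes the weight spaces of $T_\C$ on $\C^{n+1}$, which are the coordinate lines, because the characters $t_i/t_0$ are pairwise distinct. So $g$ is a monomial matrix, giving $N=T_\C\rtimes\mathfrak S_{n+1}$. Conversely, every such matrix normalizes $T_c$, because permutations preserve $T_c$ and $T_\C$ is abelian and contains $T_c$. One point needs care: conjugation by an element of $T_\C$ fixes $T_c$ pointwise, so the normalizer of $T_c$, not only that of $T_\C$, is all of $T_\C\rtimes\mathfrak S$.

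Conjugacy of maximal tori is the standard theorem for compact connected Lie groups, applied to $\prod_j\operatorname{PU}(n_j+1)$. We only need to note that $T_c$ is a maximal torus there, since its dimension $\sum n_j$ equals the rank. The last sentence is about the full groups. Any holomorphic automorphism permutes the factors of equal dimension: by the Lie-algebra splitting, conjugation by it permutes the simple ideals $\mathfrak{sl}(n_j+1)$. So the full group is $\Aut^0(X)\rtimes\Gamma$ with $\Gamma$ a finite group of permutations of equal $n_j$, and such a permutation preserves $\omega_0$ exactly when the corresponding $m_j$ agree. On characters, a factor permutation acts as a block permutation of the coordinates $x_{jq}$, which is an integral lattice automorphism preserving $\prod m_j\Sigma_{n_j}$.

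The main obstacle I expect is the isometry-group identification, specifically making sure that the isometry group is not larger than $\prod\operatorname{PU}$. My first attempt is the maximal-compact argument inside $\Aut^0(X)$, which is clean once the splitting of $\Aut^0$ is established. The normalizer computation is routine weight-space bookkeeping.
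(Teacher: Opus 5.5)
Your proposal is correct, and for the normalizer computation and the conjugacy of maximal tori it coincides with the paper's argument. You differ in how you identify the two groups in \eqref{eq:product-groups}.

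The paper argues geometrically. $\Aut^0(X)$ preserves each extremal ray of the nef cone, hence each factor projection, and so acts factorwise by projective linear maps. The isometry group is then obtained by restricting to each scaled Fubini--Study factor.

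You instead compute $H^0(X,T_X)=\bigoplus_j\mathfrak{sl}(n_j+1)$ by K\"unneth. The connected, effectively acting subgroup $\prod_j\operatorname{PGL}(n_j+1,\C)$ is therefore open in the complex Lie group $\Aut(X)$ and equals $\Aut^0(X)$. You then obtain $\operatorname{Isom}^0(X,\omega_0)$ as a compact subgroup containing the maximal compact subgroup $\prod_j\operatorname{PU}(n_j+1)$.

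Each route buys something. Yours makes the isometry identification a one-line maximality argument and never needs to check that Fubini--Study isometries inside $\operatorname{PGL}$ are unitary. The paper's avoids the Lie-algebra machinery and extends directly to the full automorphism group, since any automorphism permutes the nef extremal rays.

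Two small points in your version. First, the step from \emph{conjugation permutes the simple ideals} to \emph{the automorphism permutes the factors} needs one more sentence. The normal subgroup $\operatorname{PGL}(n_j+1,\C)$ has the $j$-slices as orbits, so an automorphism conjugating it onto the $\sigma(j)$th factor carries $j$-slices to $\sigma(j)$-slices, and a map preserving every slice family is a product map. Second, the parenthetical appeal to Calabi's theorem is not an independent substitute. It makes $\operatorname{Isom}^0$ maximal compact, but equality with $\prod_j\operatorname{PU}(n_j+1)$ still rests on the latter being maximal compact.
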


\begin{proof}
The connected automorphism group preserves each extremal ray of the nef cone,
hence each factor projection, and acts on the factors by projective linear
maps.  This gives the first identity in \eqref{eq:product-groups}.  The
identity component of the holomorphic isometry group preserves the product
decomposition and restricts to the projective unitary group on each scaled
Fubini--Study factor, giving the second identity.  The usual conjugacy theorem
for maximal tori in a compact connected Lie group gives the last assertion.

In each $\operatorname{PGL}(n_j+1,\C)$ factor, an element normalizing the
standard diagonal torus permutes its one-dimensional weight spaces.  It is
therefore represented by a monomial matrix, uniquely a diagonal matrix times
a permutation matrix modulo scalars.  Conversely every such matrix normalizes
the torus.  Taking products proves \eqref{eq:torus-normalizer}.  Factor
permutations are outside the identity component and give only the stated
discrete relabelings.
\end{proof}

Let $F$ be as in \eqref{eq:bm}.  Because $\omega=F^*\omega_0$ is
$T_c$-invariant, $FT_cF^{-1}$ is a $d$-dimensional compact torus in
$\operatorname{Isom}^0(X,\omega_0)$ and hence is maximal.  By
Lemma~\ref{lem:product-normalizer}, an isometry $K$ of $\omega_0$ conjugates
it to $T_c$.  Replacing $F$ by $KF$ leaves \eqref{eq:bm} unchanged and makes
$F$ normalize $T_c$.  Equation~\eqref{eq:torus-normalizer} now writes $F$ as
a diagonal complex-torus element followed by permutation matrices.  The
permutation parts, together with any optional repeated-factor permutations,
give the allowed integral-affine relabeling.  After making that relabeling, the
diagonal part acts on the dense orbit by $x_{jq}\mapsto c_{jq}x_{jq}$ with
$c_{jq}>0$.

The standard product potential is the logarithm of
\[
 p_0(x)=\prod_j\left(1+\sum_{q=1}^{n_j}x_{jq}\right)^{m_j}.
\]
Thus $F^*\omega_0$ has torus-invariant potential
\[
 \log p_F(x),\qquad
 p_F(x)=\prod_j\left(1+\sum_{q=1}^{n_j}c_{jq}x_{jq}\right)^{m_j}.
\]
Equality of the two K\"ahler forms says that
$\log p-\log p_F$ has zero real Hessian in the logarithmic variables
$\theta$.  It is therefore a real affine function $a+\langle b,\theta\rangle$.
Exponentiating gives
$p(x)=e^a x^b p_F(x)$.  Both sides are finite exponential sums in $\theta$,
and their uniqueness says that one integral support is the translate by $b$
of the other.  Taking the difference of any matched pair of exponents forces
$b\in M$.  Consequently
\begin{equation}
 p(x)=C x^{\tau}\prod_j
 \left(1+\sum_{q=1}^{n_j}c_{jq}x_{jq}\right)^{m_j},
 \label{eq:coefficient-form}
\end{equation}
with $\tau=b\in M$ and $C=e^a>0$.  The multinomial theorem identifies all
coefficients of $p$ and shows, a posteriori, that
$\supp(p)=\tau+((\prod_jm_j\Sigma_{n_j})\cap M)$.  Thus the coordinates of
the toric full immersion in \eqref{eq:toric-immersion-polynomial} form a basis
of $H^0(X,L)$; after removing the phases of its coordinate coefficients by a
diagonal unitary target transformation, it is the product Veronese--Segre
system with the indicated positive torus translate.  Conversely, \eqref{eq:coefficient-form} is obtained
from \eqref{eq:product-metric} by the stated toric equivalences and is
K\"ahler--Einstein exactly when
\eqref{eq:cohomological-matching} holds.  This completes the proof of
Corollary~\ref{cor:intro-fixed-torus}.

\subsection{Balanced metrics, regular quantization, and solitons}
\label{subsec:consequences}

Theorem~\ref{thm:intro-global} has several immediate consequences once the
K\"ahler class is a positive multiple of the first Chern class.  They all rest
on the following elementary lemma, which also shows exactly where that
hypothesis enters.

\begin{lemma}[Constant scalar curvature on the first-Chern ray]
\label{lem:cscK-first-chern-ray}
Let $(X,\omega)$ be a compact connected K\"ahler manifold.  Suppose that
$\omega$ has constant scalar curvature and that
\begin{equation}
 c_1(X)=\lambda\frac{[\omega]}{2\pi}
 \label{eq:first-chern-ray}
\end{equation}
for some $\lambda\in\R$.  Then $\Ric(\omega)=\lambda\omega$.
\end{lemma}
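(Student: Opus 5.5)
The plan is to compare $\Ric(\omega)$ with $\lambda\omega$ through a Ricci potential and to show that this potential is constant by taking a trace. By \eqref{eq:first-chern-ray}, $[\Ric(\omega)]/2\pi=c_1(X)=\lambda[\omega]/2\pi$, so the real closed $(1,1)$-form $\Ric(\omega)-\lambda\omega$ is $d$-exact. Since $X$ is compact K\"ahler, the $\partial\bar\partial$-lemma gives a smooth real function $F$ with
\[
 \Ric(\omega)-\lambda\omega=\sqrt{-1}\,\partial\bar\partial F .
\]

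Next I will take the trace with respect to $\omega$, contracting with $\omega^{n-1}$ and comparing with $\omega^n$. With $n=\dim_\C X$ and the normalization in which $\operatorname{tr}_\omega\Ric(\omega)$ is the scalar curvature $S$ (up to a universal positive factor, which plays no role), this gives
\[
 S-n\lambda=\Delta_\omega F ,
\]
where $\Delta_\omega$ is the $\bar\partial$-Laplacian. Integrating over $X$ kills the right-hand side, so the average of $S$ is $n\lambda$. This average is also the topological quantity $2\pi n\,c_1(X)\cdot[\omega]^{n-1}/[\omega]^n$, so the two computations agree. The hypothesis that $S$ is constant then forces $S\equiv n\lambda$.

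With $S$ constant, the trace identity becomes $\Delta_\omega F=0$. On a compact connected manifold a harmonic function is constant, by the maximum principle or by integrating $F\Delta F$. Hence $\sqrt{-1}\,\partial\bar\partial F=0$, and therefore $\Ric(\omega)=\lambda\omega$. No step here is a real obstacle. The only point needing care is normalization: the convention $[\omega]/2\pi=c_1(L)$ must match the convention $\Ric(\omega)\in 2\pi c_1(X)$, so that \eqref{eq:first-chern-ray} really makes $\Ric(\omega)-\lambda\omega$ exact rather than merely cohomologous up to a scale factor. I would check this once against $\Ric(\omega_{\FS})=(s+1)\omega_{\FS}$ on $\mathbb P^s$.
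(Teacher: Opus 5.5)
Your proposal is correct and is essentially the paper's proof. Both use the $\partial\bar\partial$-lemma to write $\Ric(\omega)-\lambda\omega=\sqrt{-1}\,\partial\bar\partial F$, trace against $\omega$ to make $\Delta_\omega F$ constant, integrate to force that constant to vanish, and conclude from compactness and connectedness that $F$ is constant.
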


\begin{proof}
The two closed real $(1,1)$-forms $\Ric(\omega)$ and $\lambda\omega$
represent the same cohomology class.  By the $\partial\bar\partial$-lemma,
\[
 \Ric(\omega)-\lambda\omega=i\partial\bar\partial f
\]
for a real smooth function $f$.  Contracting with $\omega$ shows that
$\Delta_\omega f$ is constant, because the scalar curvature is constant.
Its integral against the volume form is zero, so $\Delta_\omega f=0$.
Compactness then makes $f$ constant.
\end{proof}

\begin{corollary}[Positive-ray projectively induced cscK metrics]
\label{cor:cscK-positive-ray}
Let $X$ be a smooth compact toric manifold and let
$\varphi\colon X\to\mathbb P^s$ be a full holomorphic immersion.  Put
$\omega=\varphi^*\omega_{\FS}$ and
$L=\varphi^*\mathcal O_{\mathbb P^s}(1)$.  If $\omega$ has constant scalar
curvature and
\[
 c_1(X)=\lambda c_1(L),\qquad \lambda>0,
\]
then all the conclusions of Theorem~\ref{thm:intro-global} hold.  In
particular, $(X,\omega)$ is homogeneous.
\end{corollary}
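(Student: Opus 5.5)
The plan is to reduce Corollary~\ref{cor:cscK-positive-ray} to Theorem~\ref{thm:intro-global} by way of Lemma~\ref{lem:cscK-first-chern-ray}, and then read off homogeneity from the explicit normal form.

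\textbf{Step 1: the Kähler class.} With the convention $[\omega_{\FS}]/2\pi=c_1(\mathcal O_{\mathbb P^s}(1))$ fixed before Theorem~\ref{thm:intro-global}, pulling back by $\varphi$ gives
\[
 \frac{[\omega]}{2\pi}=c_1(L).
\]
The hypothesis $c_1(X)=\lambda c_1(L)$ is then exactly condition~\eqref{eq:first-chern-ray}.

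\textbf{Step 2: cscK implies Kähler--Einstein.} The manifold $X$ is compact and connected, being a smooth compact toric manifold, and $\omega$ has constant scalar curvature. Lemma~\ref{lem:cscK-first-chern-ray} therefore gives $\Ric(\omega)=\lambda\omega$. At this point $\varphi$ is a full holomorphic immersion of a smooth compact toric manifold inducing a Kähler--Einstein metric, which is exactly the hypothesis of Theorem~\ref{thm:intro-global}.

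\textbf{Step 3: apply Theorem~\ref{thm:intro-global}.} The theorem gives all its conclusions: $X\simeq\prod_j\mathbb P^{n_j}$, the matching $(n_j+1)/m_j=\lambda$, and the factorization $\varphi=U\circ\iota_{\mathbf m}\circ F$. The positivity $\lambda>0$ holds in any case, and it also follows from that theorem.

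\textbf{Step 4: homogeneity.} Theorem~\ref{thm:intro-global} gives $\omega=F^*\omega_0$ with $\omega_0=\bigoplus_jm_j\omega_{\FS,j}$. So $F$ is a holomorphic isometry from $(X,\omega)$ to $(\prod_j\mathbb P^{n_j},\omega_0)$. The group $\prod_j\operatorname{PU}(n_j+1)$ acts holomorphically and isometrically on the target, and it is transitive because each $\operatorname{PU}(n_j+1)$ acts transitively on $\mathbb P^{n_j}$ preserving $\omega_{\FS}$. Conjugating this group by $F$ gives a transitive group of holomorphic isometries of $(X,\omega)$, so $(X,\omega)$ is homogeneous.

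There is no substantive obstacle. The only points needing care are the normalization in Step~1, which makes $c_1(L)$ equal to $[\omega]/2\pi$ rather than a multiple of it, and the observation that connectedness of $X$ is automatic, since Lemma~\ref{lem:cscK-first-chern-ray} uses it when it concludes that a harmonic function is constant.
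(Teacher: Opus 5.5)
Your proposal is correct and follows the paper's proof essentially step for step. The normalization $[\omega]/2\pi=c_1(L)$ lets Lemma~\ref{lem:cscK-first-chern-ray} upgrade cscK to $\Ric(\omega)=\lambda\omega$, Theorem~\ref{thm:intro-global} then applies, and homogeneity is transported from the product metric $\bigoplus_j m_j\omega_{\FS,j}$ by conjugating with the automorphism $F$.
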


\begin{proof}
Our normalization gives $[\omega]/2\pi=c_1(L)$, so
Lemma~\ref{lem:cscK-first-chern-ray} makes $\omega$ K\"ahler--Einstein.
Apply Theorem~\ref{thm:intro-global}.  Its product metric is homogeneous,
and pulling it back by a domain automorphism preserves homogeneity.
\end{proof}

\begin{corollary}[Regular quantization on the positive first-Chern ray]
\label{cor:regular-quantization}
Let $(X,L)$ be a smooth compact polarized toric manifold satisfying
$c_1(X)=\lambda c_1(L)$ for some $\lambda>0$.  Suppose that there is a
K\"ahler form $\omega\in2\pi c_1(L)$ for which $r\omega$ is balanced for
every sufficiently large positive integer $r$; that is, $\omega$ admits a
regular quantization.  Then, for positive integers
$n_j,m_j$,
\begin{equation}
 X\simeq\prod_j\mathbb P^{n_j},\qquad
 L\simeq\boxtimes_j\mathcal O_{\mathbb P^{n_j}}(m_j),\qquad
 \frac{n_j+1}{m_j}=\lambda,
 \label{eq:regular-product}
\end{equation}
and, for some $F\in\Aut^0(X)$,
\[
 \omega=F^*\!\left(\bigoplus_jm_j\omega_{\FS,j}\right).
\]
Conversely, each polarized product in \eqref{eq:regular-product}, with the
displayed product metric, has a regular quantization.
\end{corollary}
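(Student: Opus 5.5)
The plan is to reduce to Theorem~\ref{thm:intro-global}, using the fact that balanced metrics are projectively induced, and then to use Lemma~\ref{lem:cscK-first-chern-ray} to upgrade to Kähler--Einstein.

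First, fix a large integer $r$ for which $r\omega$ is balanced. By definition of balanced (Donaldson, Zhang), the Kodaira map of $L^r$ given by an orthonormal basis of $H^0(X,L^r)$ with respect to the $L^2$ product of a hermitian metric $h_r$ with curvature $r\omega$ is a full embedding $\varphi_r\colon X\to\mathbb P^{N_r}$. With our normalization it satisfies $\varphi_r^*\omega_{\FS}=r\omega$, because the density of states function is constant. So $r\omega$ is projectively induced, and $\varphi_r^*\mathcal O(1)=L^r$.

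Second, I show that $\omega$ has constant scalar curvature. Regular quantization means that the Bergman density $\rho_r$ is constant for all large $r$. The Tian--Yau--Zelditch--Lu expansion $\rho_r=r^d+\tfrac12 s(\omega)r^{d-1}+O(r^{d-2})$ then forces the scalar curvature $s(\omega)$ to be constant. Since $c_1(X)=\lambda c_1(L)=(\lambda/r)\,c_1(L^r)$ with $\lambda/r>0$, Corollary~\ref{cor:cscK-positive-ray} applies to $\varphi_r$ with Einstein constant $\lambda/r$. Equivalently, Lemma~\ref{lem:cscK-first-chern-ray} makes $r\omega$ Kähler--Einstein and Theorem~\ref{thm:intro-global} applies directly.

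Third, from Theorem~\ref{thm:intro-global} we get $X\simeq\prod_j\mathbb P^{n_j}$ and $L^r\simeq\boxtimes_j\mathcal O(m_j')$ with $(n_j+1)/m_j'=\lambda/r$. We also get $r\omega=F^*\bigl(\bigoplus_j m_j'\omega_{\FS,j}\bigr)$ for some $F\in\Aut^0(X)$. Since $\operatorname{Pic}$ of a product of projective spaces is torsion-free, $L\simeq\boxtimes_j\mathcal O(m_j)$ with $m_j=m_j'/r$ a positive integer, and then $(n_j+1)/m_j=\lambda$. Dividing by $r$ gives the stated form of $\omega$.

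The main obstacle is the second step, where we must extract constant scalar curvature from regularity. I would cite the asymptotic expansion rather than reprove it. A cleaner alternative avoids the expansion: any single balanced $r\omega$ already makes $r\omega$ projectively induced, but constancy of $s$ genuinely needs infinitely many $r$ (or at least the expansion). So I would state the expansion as the input.

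For the converse, the product metric $\bigoplus_j m_j\omega_{\FS,j}$ is invariant under the transitive group $\prod_j\operatorname{PU}(n_j+1)$, which lifts to act on $L^r$ preserving $h_r$. The Bergman density is therefore invariant and hence constant for every $r$, so every multiple $r\omega$ is balanced.
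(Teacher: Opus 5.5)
Your proposal is correct and follows essentially the same route as the paper. Constancy of the Tian--Yau--Zelditch coefficients gives constant scalar curvature, Lemma~\ref{lem:cscK-first-chern-ray} upgrades this to K\"ahler--Einstein on the first-Chern ray, Theorem~\ref{thm:intro-global} is applied to the coherent-states immersion of a balanced $r\omega$, and torsion-freeness of the Picard group lets you divide by $r$. One small correction in the converse: $\prod_j\operatorname{PU}(n_j+1)$ need not itself lift to $L^r$ (for instance, $\operatorname{PU}(2)$ does not lift to $\mathcal O_{\mathbb P^1}(r)$ when $r$ is odd), but the covering group $\prod_j\operatorname{SU}(n_j+1)$ does lift, is still transitive, and preserves $h_r$, which is all your invariance argument needs.
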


\begin{proof}
Regularity makes every coefficient of the Tian--Yau--Zelditch expansion of
the distortion function constant \cite[Lemma~2.3]{ArezzoLoiZuddas2012}; in
particular the first coefficient, a constant multiple of the scalar
curvature, is constant \cite{ArezzoLoiZuddas2013}.  Lemma~\ref{lem:cscK-first-chern-ray} therefore
gives $\Ric(\omega)=\lambda\omega$.  Choose a sufficiently large $r$ for
which $L^r$ is very ample and $r\omega$ is balanced.  Its coherent-states
map is a full K\"ahler immersion and
$\Ric(r\omega)=(\lambda/r)(r\omega)$.  Apply
Theorem~\ref{thm:intro-global}.  Since the Picard group of a product of
projective spaces is torsion-free, the conclusion for $L^r$ divides by $r$
to give \eqref{eq:regular-product}, and the metric identity divides by $r$.
Conversely, transitivity makes the distortion function of every positive
tensor power of the displayed homogeneous polarization constant.
\end{proof}

\begin{corollary}[Homothetic balanced product rigidity]
\label{cor:homothetic-balanced}
Let $(X,L)$ be a smooth compact polarized toric manifold with
$c_1(X)=\lambda c_1(L)$ for some $\lambda>0$.  Suppose that a balanced
K\"ahler metric $g_B$, polarized by a positive tensor power of $L$, has
infinitely many balanced positive-integer homotheties.  Then $X$ is a product
of projective spaces, $L$ is a matched product polarization as in
\eqref{eq:regular-product}, and $g_B$ is homogeneous.  Conversely, the
standard matched product metrics have infinitely many balanced
positive-integer homotheties.
\end{corollary}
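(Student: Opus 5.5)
The plan is to reduce to Corollary~\ref{cor:regular-quantization} by showing that infinitely many balanced homotheties already give a regular quantization, and then read off homogeneity from the product metric. Write $g_B$ with Kähler form $\omega_B\in 2\pi c_1(L^{r_0})$. The hypothesis is that $r\omega_B$ is balanced for infinitely many positive integers $r$, say $r$ in an infinite set $R$. For each $r\in R$ the Bergman density function $\rho_r$ of $(L^{rr_0},h^r)$, where $h$ is the hermitian metric with curvature $\omega_B$, is constant. By the Tian--Yau--Zelditch--Catlin expansion,
\[
 \rho_r=\sum_{i\ge0}a_i(\omega_B)\,r^{d-i}+O(r^{-\infty}),
\]
with $a_0=1$ and $a_1$ a constant multiple of the scalar curvature. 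Take the sequence $r\to\infty$ inside $R$. Each $\rho_r$ is constant, and subtracting the average over $X$ kills every term successively. Dividing by $r^{d-1}$ and letting $r\to\infty$ shows that $a_1-\bar a_1\to0$ uniformly, so $a_1$ is constant. Hence the scalar curvature of $\omega_B$ is constant. We only need $a_1$, so we do not need full regularity.

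Next apply Lemma~\ref{lem:cscK-first-chern-ray}. We have $c_1(X)=\lambda c_1(L)=(\lambda/r_0)[\omega_B]/2\pi$, so $\omega_B$ is Kähler--Einstein with constant $\lambda/r_0>0$. Fix one $r\in R$ large enough that $L^{rr_0}$ is very ample. The coherent-states (Kodaira) map of the balanced metric $r\omega_B$, taken with an orthonormal basis of $H^0(X,L^{rr_0})$, is a full holomorphic embedding. Balancedness means exactly that the Fubini--Study form pulls back to $r\omega_B$, with the usual normalization constant absorbed. So $r\omega_B$ is projectively induced and Einstein with constant $\lambda/(rr_0)$.

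Theorem~\ref{thm:intro-global} then gives $X\simeq\prod\mathbb P^{n_j}$ and $L^{rr_0}\simeq\boxtimes\mathcal O(m_j')$ with $(n_j+1)/m_j'=\lambda/(rr_0)$. It also gives $r\omega_B=F^*\bigoplus m_j'\omega_{\FS,j}$. Since the Picard group is torsion-free and $c_1(X)=\lambda c_1(L)$, we get $L\simeq\boxtimes\mathcal O(m_j)$ with $(n_j+1)/m_j=\lambda$. The metric $g_B$ is the pullback by an automorphism of a homogeneous product metric, so it is homogeneous.

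For the converse, the matched product metric is invariant under the transitive group $\prod\operatorname{PU}(n_j+1)$. Its Bergman density for every tensor power is therefore invariant, hence constant, so every positive-integer homothety is balanced.

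The main obstacle is the first step. The full regularity argument of Corollary~\ref{cor:regular-quantization} uses balancedness for all large $r$. Here it must be replaced by the subsequence argument, and we have to check that the TYZ expansion is uniform in $C^0$, so that constancy along an infinite subsequence still forces $a_1$ to be constant. We also have to match the normalization in which balanced means $\varphi^*\omega_{\FS}=r\omega_B$ exactly, and not only up to a constant factor; this matters for applying Theorem~\ref{thm:intro-global}, whose Einstein constant is not rescaled.
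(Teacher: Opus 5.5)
Your proposal is correct and follows the paper's proof step for step: constancy of the scalar curvature, then Lemma~\ref{lem:cscK-first-chern-ray} with Einstein constant $\lambda/r_0$, then a large very ample balanced homothety fed into Theorem~\ref{thm:intro-global}, then division in the torsion-free Picard group, and finally transitivity for the converse. The only difference is that you derive constancy of $a_1$ directly from the $C^0$-uniform Tian--Yau--Zelditch expansion along the infinite set $R$, which is exactly the content of the Arezzo--Loi--Zuddas Lemma~2.3 that the paper cites; your normalization worry is also harmless, because a constant density disappears under $\partial\bar\partial\log$, so the coherent-states map pulls $\omega_{\FS}$ back to exactly $r\omega_B$.
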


\begin{proof}
By \cite[Lemma~2.3]{ArezzoLoiZuddas2012}, infinitely many balanced
homotheties force the scalar curvature to be constant.  If the K\"ahler form
$\omega_B$ of $g_B$ represents $2\pi c_1(L^{r_0})$, then
$c_1(X)=(\lambda/r_0)[\omega_B]/2\pi$.
Lemma~\ref{lem:cscK-first-chern-ray} makes $g_B$ K\"ahler--Einstein.  Choose
an arbitrarily large balanced homothety whose polarizing line bundle is very
ample and apply Theorem~\ref{thm:intro-global}.  Division in the torsion-free
Picard group gives the assertion for $L$.  The converse follows from
homogeneity, or directly from the complete product Veronese--Segre systems.
\end{proof}

For the anticanonical polarization $L=K_X^{-1}$,
Corollary~\ref{cor:homothetic-balanced} extends to all dimensions the
product-classification clause of
\cite[Theorem~1.2]{ArezzoLoiZuddas2012}.  Our results do not address the
existence assertion of that theorem.

\begin{corollary}[Finite-projective toric K\"ahler--Ricci solitons]
\label{cor:toric-krs}
Let $X$ be a smooth compact toric manifold and let $(\omega,V)$ be a
K\"ahler--Ricci soliton on $X$.  If $(X,\omega)$ admits a K\"ahler immersion
into a finite-dimensional complex projective space with its positive
Fubini--Study metric, then the soliton metric is K\"ahler--Einstein and
homogeneous.  After replacing the target by the projective span of the image,
all the conclusions of Theorem~\ref{thm:intro-global} hold.
\end{corollary}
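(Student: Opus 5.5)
The plan is to show that the soliton vector field $V$ vanishes, so that $\omega$ is K\"ahler--Einstein, and then to apply Theorem~\ref{thm:intro-global}.

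\emph{Step 1: normalize the target and find the Einstein constant.} Replace $\mathbb P^s$ by the projective span of $\varphi(X)$. The immersion becomes full, and $\omega=\varphi^*\omega_{\FS}$ for the restricted Fubini--Study form. Then $[\omega]/2\pi=c_1(L)$ with $L=\varphi^*\mathcal O(1)$. The soliton equation $\Ric(\omega)-\lambda\omega=\mathcal L_V\omega$ is cohomologically trivial on the right, so $c_1(X)=\lambda c_1(L)$. Compactness of the soliton forces $X$ to be Fano, hence $\lambda>0$.

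\emph{Step 2: the potential and the torus.} Write $\mathcal L_V\omega=i\partial\bar\partial\theta_V$, where $\theta_V$ is the Hamiltonian potential of $\operatorname{Im}V$. By the standard theory (Tian--Zhu), $\operatorname{Im}V$ generates a compact torus of isometries of $\omega$. Conjugate a maximal torus containing it to the toric torus $T_c$, as in Lemma~\ref{lem:adapted-torus}, using that the soliton's isometry group is maximal compact. We may then assume that $\omega$ is $T_c$-invariant and $V\in\mathfrak t_\C$. By Di Scala--Sombra Lemma~2.7, after a unitary target change $\varphi$ is toric with positive polynomial $p$. The soliton equation then reads
\[
\det\nabla^2\psi=Ce^{\langle b,\theta\rangle-\lambda\psi}\,e^{-\langle\beta,\nabla\psi\rangle},\qquad \psi=\log p,
\]
where $\beta$ corresponds to $V$.

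\emph{Step 3 (main obstacle): show $\beta=0$.} I would use analytic continuation in $\theta\mapsto\theta+it$. The left side and $e^{-\lambda\psi}$ are algebraic functions of $x=e^\theta$: they are rational in $x$ and in $p^{\lambda}$. The factor $e^{-\langle\beta,\nabla\psi\rangle}=\exp\bigl(-\sum\beta_i\Theta_ip/p\bigr)$ is the exponential of a rational function. Solving for it,
\[
\exp\bigl(-\langle\beta,\Theta p\rangle/p\bigr)=\mu(p)\,x^{-b}p^{\lambda-d-1}/C,
\]
and the right side has at worst algebraic singularities (poles and branch points) along the zeros of $p$ in $(\C^*)^d$. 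The left side has an essential singularity along any component of $p=0$ where the rational function $\langle\beta,\Theta p\rangle/p$ has a pole. Since $p$ has positive coefficients and full-dimensional Newton polytope, its zero set in $(\C^*)^d$ is a nonempty hypersurface. So $p$ must divide $\langle\beta,\Theta p\rangle$. The argument of Lemma~\ref{lem:full-factor} (Newton polytope containment plus boundedness) then shows that $\langle\beta,a\rangle$ is constant on $\supp p$. Because the support affinely spans, $\beta=0$.

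The delicate points are the existence of complex zeros of $p$ off the axes and the reduction to algebraicity. A fallback is Futaki-invariant/Mabuchi-type uniqueness: a toric soliton that is induced from projective space restricts its barycenter to the Einstein one. I would try the essential-singularity argument first.

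\emph{Step 4: conclude.} With $V=0$ we have $\Ric(\omega)=\lambda\omega$. Theorem~\ref{thm:intro-global} then gives all its conclusions, including homogeneity of the product metric pulled back by $F\in\Aut^0(X)$.
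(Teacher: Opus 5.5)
Your proposal is correct in substance, but it takes a different route from the paper. The paper never proves $V=0$ by hand. It quotes Loi--Mossa \cite[Theorem~1.1]{LoiMossa2021}: any K\"ahler--Ricci soliton K\"ahler-immersed in a finite-dimensional complex space form is already K\"ahler--Einstein. It then gets $\lambda>0$ from Hulin \cite{Hulin2000}, restricts to the projective span, and applies Theorem~\ref{thm:intro-global}. You instead stay inside the paper's Laurent-polynomial framework. After adapting the torus and making $\varphi$ toric, the soliton equation becomes $\exp(-R)=\mu(p)x^{-b}p^{\lambda-d-1}/C$ with $R=\langle\beta,\Theta p\rangle/p$ rational. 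A growth comparison along each component $\{f=0\}$ of $\{p=0\}$ shows that $R$ has no poles in the torus. The Newton-polytope step of Lemma~\ref{lem:full-factor} then makes $\langle\beta,a\rangle$ constant on $\supp p$, hence $\beta=0$.

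The paper's route is shorter and does not use the toric structure. Yours is elementary at the polynomial level, but it moves the burden to Step~2. There Matsushima and Calabi must be replaced by their soliton analogues, due to Tian--Zhu: $JV$ is a Killing field commuting with every Killing field, which is what puts $V$ in $\mathfrak t_\C$ after conjugation, and the soliton's isometry group is maximal compact in $\Aut^0(X)$. Alternatively, use Wang--Zhu's torus-invariant soliton together with Tian--Zhu uniqueness. Either way, these results should be cited, not called ``standard theory.''

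Three small repairs are needed.
\begin{enumerate}
\item ``Compactness forces $X$ to be Fano'' is not a valid reason, since compact K\"ahler--Einstein manifolds with $\lambda\le0$ exist. Use instead that $c_1(X)$ is represented by the nonzero effective sum $\sum_iD_i$ of the toric boundary divisors. Then $\lambda\,c_1(L)^d=c_1(X)\cdot c_1(L)^{d-1}>0$, which you need before invoking Fano soliton theory.
\item The right side is not algebraic when $b$ or $\lambda$ is nonintegral. What you actually use is that near a generic point of $\{f=0\}$ every continuation has modulus $|f|^{\kappa}|h|$, with $\kappa$ real and $h$ holomorphic and nonvanishing. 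Hence $\operatorname{Re}R=O(\log(1/|f|))$, which rules out a pole of $R$.
\item You do not need zeros of $p$ to exist. Absence of poles along every component already makes $R$ a Laurent polynomial.
\end{enumerate}
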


\begin{proof}
Loi and Mossa prove that a K\"ahler--Ricci soliton K\"ahler immersed into a
finite-dimensional definite or indefinite complex space form is
K\"ahler--Einstein \cite[Theorem~1.1]{LoiMossa2021}.  In the present compact
positive-projective setting, Hulin's theorem makes the Einstein constant
positive \cite{Hulin2000}.  Restrict the target to the projective span and
apply Theorem~\ref{thm:intro-global}.
\end{proof}

These corollaries settle restricted forms of several questions in the
literature.  Theorem~\ref{thm:intro-global} proves the compact smooth toric
case of \cite[Conjecture~4.3.3]{LoiZedda2018}.  Corollary~\ref{cor:cscK-positive-ray} proves the compact smooth toric,
positive-first-Chern-ray case of the conjectural homogeneity of compact
projectively induced cscK metrics recalled in \cite{LoiZuddas2020}.
Corollary~\ref{cor:regular-quantization} proves the corresponding
positive-first-Chern-ray case of the toric product prediction in
\cite[Remark~3.3]{LoiZuddas2024}.  The unrestricted cscK, regular
quantization, complete noncompact, and nontoric questions remain open.

\section{Finite-support exponential families}
\label{sec:finite-nef}

This section proves Theorem~\ref{thm:intro-nef}.  Because the carrier $A$ is
an arbitrary finite subset of $\R^d$, there is no lattice or torus to start
from, and the first task is to build them.  Section~\ref{subsec:nef-germ}
does this by completing the projective exponential map.
Section~\ref{subsec:nef-classification} then applies
Corollary~\ref{cor:intro-fixed-torus} and restates the result in statistical
language, and Section~\ref{subsec:nef-literature} relates it to earlier work.

We first recall the statistical setting.  As in the introduction,
\[
 \psi(\theta)=\log\sum_{a\in A}c_ae^{\langle a,\theta\rangle}
\]
is the cumulant function of a minimal finite-support natural exponential
family, where $c_a>0$ and $A$ affinely spans $\R^d$.  Its Fisher information
in natural coordinates is $\nabla^2\psi$.  A standard Diaconis--Ylvisaker
conjugate prior has the form
\[
 \exp\{t(\langle\theta,m_0\rangle-\psi(\theta))\}\,d\theta,
 \qquad t>0
\]
\cite{DiaconisYlvisaker1979}.  Thus the Jeffreys prior belongs to this family
exactly when \eqref{eq:intro-fisher-determinant} holds, with
$(t,m_0)=(\lambda/2,b/\lambda)$; by Lemma~\ref{lem:nef-boundary}, $\lambda>0$
and this prior is proper.
Here $m_0$ is a mean hyperparameter; it is unrelated to the support-translation
point also denoted $m_0$ in Section~\ref{sec:preliminaries}.

\subsection{Boundary behavior and the projective germ}
\label{subsec:nef-germ}

Two facts come before the classification.  The first is elementary: the
identity forces $\lambda>0$ and makes the Jeffreys prior proper.  The second
manufactures the lattice and the torus that the hypothesis does not provide.

\begin{lemma}[Boundary and properness]
\label{lem:nef-boundary}
Under \eqref{eq:intro-fisher-determinant},
\[
 \lambda>0,\qquad \frac b\lambda\in\operatorname{int}\conv(A),
\]
and the corresponding Diaconis--Ylvisaker density is proper.
\end{lemma}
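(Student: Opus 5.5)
The plan is to integrate both sides of \eqref{eq:intro-fisher-determinant} over $\R^d$ and read off all three assertions from the finiteness of the integral.

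\emph{Step 1: the left side has finite integral.} Since $A$ affinely spans $\R^d$ and $c_a>0$, $\psi$ is strictly convex. Its gradient $\nabla\psi$ is the mean map of the exponential family. It is a diffeomorphism of $\R^d$ onto $\operatorname{int}\conv(A)$: this is the standard steepness/minimality fact for finite-support natural exponential families. By the change of variables $\eta=\nabla\psi(\theta)$,
\[
 \int_{\R^d}\det\nabla^2\psi\,d\theta=\operatorname{vol}\bigl(\conv(A)\bigr)\in(0,\infty).
\]
Hence
\[
 \int_{\R^d}\exp\{\langle b,\theta\rangle-\lambda\psi(\theta)\}\,d\theta
 =C^{-1}\operatorname{vol}(\conv(A))<\infty.
\]
Once $\lambda>0$ is established, this finiteness is exactly the properness of the Diaconis--Ylvisaker density with $(t,m_0)=(\lambda/2,b/\lambda)$.

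\emph{Step 2: $\lambda>0$.} We use the elementary bounds
\[
 \log c_a+\langle a,\theta\rangle\le\psi(\theta)\le\log\textstyle\sum_ac_a+h(\theta),
 \qquad h(\theta)=\max_{a\in A}\langle a,\theta\rangle,
\]
valid for every $a\in A$. If $\lambda\le0$, the lower bound with any fixed $a$ gives $\exp\{\langle b,\theta\rangle-\lambda\psi\}\ge\mathrm{const}\cdot\exp\langle b-\lambda a,\theta\rangle$. The exponential of a linear function, possibly zero, is not integrable on $\R^d$, which contradicts Step 1.

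\emph{Step 3: $m:=b/\lambda\in\operatorname{int}\conv(A)$.} With $\lambda>0$, the upper bound gives
\[
 \exp\{\langle b,\theta\rangle-\lambda\psi\}\ge\mathrm{const}\cdot e^{\lambda g(\theta)},
 \qquad g(\theta)=\langle m,\theta\rangle-h(\theta).
\]
Here $g$ is concave, positively homogeneous and Lipschitz, and $h$ is the support function of $\conv(A)$.
\begin{itemize}
 \item If $m\notin\conv(A)$, some $\theta_0$ has $g(\theta_0)>0$. Then $g>0$ on an open cone, and the integral diverges.
 \item If $m\in\partial\conv(A)$, a supporting functional $\theta_0\neq0$ gives $g(t\theta_0)=0$ for all $t\ge0$. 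By the Lipschitz bound, $g\ge-K$ on the unit tube around this ray. The tube has infinite volume and the integrand is bounded below there, so the integral again diverges.
\end{itemize}
Both cases contradict Step 1, so $m$ lies in the interior.

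Alternatively, Steps 2--3 can be replaced by citing the Diaconis--Ylvisaker criterion \cite{DiaconisYlvisaker1979}: the integral is finite iff $t>0$ and $m_0\in\operatorname{int}\conv(A)$.

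The only point needing care is Step 1, namely that the image of $\nabla\psi$ is exactly the open polytope. This follows because $\nabla\psi(\theta)$ is a strict convex combination of the points of $A$, and because $\nabla\psi(t\theta_0)$ tends to the face of $\conv(A)$ exposed by $\theta_0$ as $t\to\infty$.
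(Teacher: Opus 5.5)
Your proof that $\lambda>0$ and $b/\lambda\in\operatorname{int}\conv(A)$ is correct. The properness claim, however, has a gap. In Step~1 you prove that $\exp\{\langle b,\theta\rangle-\lambda\psi\}=\exp\{\lambda(\langle b/\lambda,\theta\rangle-\psi)\}$ is integrable. That is the Diaconis--Ylvisaker density with $t=\lambda$, not $t=\lambda/2$.

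The density with $(t,m_0)=(\lambda/2,b/\lambda)$ is $\exp\{\tfrac12(\langle b,\theta\rangle-\lambda\psi)\}$, which is proportional to the Jeffreys density $\sqrt{\det\nabla^2\psi}$. It is the square root of your integrand. Integrability of $f$ does not by itself give integrability of $\sqrt f$, so your sentence ``this finiteness is exactly the properness'' is false as stated.

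You can repair it with what you already have:
\begin{enumerate}
\item After Step~3, $m=b/\lambda\in\operatorname{int}\conv(A)$. Compactness of the unit sphere then gives $\delta>0$ with $h(\theta)-\langle m,\theta\rangle\ge\delta\|\theta\|$.
\item Your bound $\psi\ge\min_a\log c_a+h$ yields $t(\langle m,\theta\rangle-\psi)\le-t\delta\|\theta\|+\mathrm{const}$.
\item This is integrable for every $t>0$, in particular for $t=\lambda/2$. Equivalently, apply the Diaconis--Ylvisaker criterion you cite at $t=\lambda/2$.
\end{enumerate}
This is exactly the last paragraph of the paper's proof.

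Apart from this, your route genuinely differs from the paper's.
\begin{itemize}
\item[] \emph{The paper's route (pointwise along rays).} As $t\to\infty$ the law $p_{tu}$ concentrates on the exposed face $F_u$, so the covariance degenerates and $\det\nabla^2\psi(tu)\to0$. Inserting $\psi(tu)=th_P(u)+\log\sum_{F_u}c_a+o(1)$ into the identity forces $\langle b,u\rangle<\lambda h_P(u)$ for every $u\ne0$. Applying this to $\pm u$ gives $\lambda>0$ and the interior condition at once, with no integration.
\item[] \emph{Your route (global).} You use the finiteness of the total Fisher volume $\int\det\nabla^2\psi\,d\theta$. You then exclude the bad cases by divergence: the exponential of a linear function, an open cone, or an infinite-volume tube. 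This is essentially the necessity half of the Diaconis--Ylvisaker theorem at $t=\lambda$. It trades the concentration asymptotics for a change of variables through the mean map.
\end{itemize}
The point you flag as ``the only point needing care'' is in fact unnecessary. For finiteness you only need $\nabla\psi$ injective (by strict convexity) with image in the bounded set $\conv(A)$; the area formula then gives $\int\det\nabla^2\psi\le\operatorname{vol}\conv(A)$. Surjectivity onto the interior is never used, and your ray-limit remark would in any case only sketch it.
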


\begin{proof}
Define
\[
 p_\theta(a)=\frac{c_ae^{\langle a,\theta\rangle}}{Z(\theta)}.
\]
Then $\nabla\psi(\theta)=\mathbb E_\theta a$ and
$\nabla^2\psi(\theta)=\operatorname{Cov}_\theta(a)>0$.  Fix $u\ne0$,
put $P=\conv(A)$, and let $F_u$ be the carrier points on the exposed face
where $\langle a,u\rangle=h_P(u):=\max_{a'\in A}\langle a',u\rangle$.  As $t\to+\infty$,
\[
 p_{tu}(a)\longrightarrow
 \frac{c_a\mathbf1_{F_u}(a)}{\sum_{v\in F_u}c_v},\qquad
 \psi(tu)=t h_P(u)+\log\sum_{a\in F_u}c_a+o(1).
\]
The limiting law is supported on a proper affine hyperplane, so the limiting
covariance is singular.  Equation~\eqref{eq:intro-fisher-determinant} therefore
forces
\[
 \langle b,u\rangle<\lambda h_P(u)\qquad(u\ne0).
\]
Applying this to $-u$ gives
\[
 \lambda\min_{a\in A}\langle a,u\rangle
 <\langle b,u\rangle
 <\lambda\max_{a\in A}\langle a,u\rangle.
\]
Minimality makes the extrema distinct.  Hence $\lambda>0$, and the strict
support-function criterion gives $b/\lambda\in\operatorname{int}P$.

For completeness, compactness of the unit sphere gives $\delta>0$ such that
$h_P(u)-\langle b/\lambda,u\rangle\ge\delta$ when $\|u\|=1$.
The log-density of the conjugate prior therefore tends to $-\infty$ at least
linearly in $\|\theta\|$, proving integrability.
\end{proof}

\begin{lemma}[Intrinsic phase torus]
\label{lem:intrinsic-phase-torus}
Assume $d\ge2$ and \eqref{eq:intro-fisher-determinant}.  The holomorphic map
\begin{equation}
 \Phi(z)=\bigl[\sqrt{c_a}e^{\langle a,z\rangle}\bigr]_{a\in A}
 \colon\C^d\longrightarrow\mathbb P^{|A|-1}
 \label{eq:nef-projective-map}
\end{equation}
has a full compact smooth projective K\"ahler--Einstein completion $X$.
Moreover, there is an effective Hamiltonian $d$-torus $T$ on $X$ for which
the completed immersion is toric, and, for any $a_0\in A$,
\[
 A-a_0\subset M:=\operatorname{Hom}(T,S^1),\qquad
 \Z(A-a_0)=M.
\]
\end{lemma}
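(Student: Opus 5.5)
Our plan is to derive the statement in four steps: build the completion with Hulin's theorem, produce a torus by closing the phase action, match the dimension of that torus using the Einstein identity, and recover the lattice from the characters.

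\emph{Completion.} Equation \eqref{eq:intro-fisher-determinant} says that $\omega=\Phi^*\omega_{\FS}=i\partial\bar\partial\psi(z+\bar z)$ satisfies $\Ric(\omega)=\lambda\omega$ on $\C^d$, and Lemma~\ref{lem:nef-boundary} gives $\lambda>0$. We may first discard a redundant target span, but since the exponentials $e^{\langle a,z\rangle}$ for distinct $a$ are linearly independent, $\Phi$ is already full. Because $A$ affinely spans $\R^d$, $\Phi$ is an immersion. A small ball $B\subset\C^d$ is therefore a projectively induced K\"ahler--Einstein germ. Hulin's completion theorem \cite{Hulin1996} then says that $\Phi(B)$ lies in a compact K\"ahler--Einstein submanifold $X\subset\mathbb P^{|A|-1}$ of dimension $d$. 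By analytic continuation, $\Phi(\C^d)\subset X$, since $\C^d$ is connected and $\Phi^{-1}(X)$ is an analytic set containing an open set. The manifold $X$ is smooth and projective by Chow's theorem.

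\emph{Phase torus.} The unitary diagonal maps $U_t=\operatorname{diag}(e^{i\langle a,t\rangle})$, $t\in\R^d$, satisfy $U_t\Phi(z)=\Phi(z+it)$. Hence they preserve $\Phi(\C^d)$, and so by continuation they preserve its closure $X$. This holds because $U_tX$ is again an irreducible $d$-dimensional subvariety that shares an open set with $X$. Let $T$ be the closure of $\{U_t\}$ in $\operatorname{PU}(|A|)$. It is a compact connected abelian group, hence a torus, and it acts on $X$ by holomorphic isometries fixing the hyperplane $\sum$-data. It is Hamiltonian, with moment map the restriction of the standard one on $\mathbb P^{|A|-1}$.

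\emph{Dimension.} This is the main obstacle. The lower bound $\dim T\ge d$ is clear: the orbit of $\Phi(0)$ contains $\Phi(i\R^d)$, and $\Phi$ is an immersion, which gives $d$ independent infinitesimal generators. Effectiveness of a Hamiltonian torus action on a compact K\"ahler $d$-fold forces $\dim T\le d$. Here I would use the fact that a $T$-orbit through a generic point is isotropic for $\omega$, so its dimension is at most $d$, and that generic stabilizers of an effective torus action are finite. Thus $\dim T=d$ and $X$ is a toric manifold. The map $\R^d\to T$, $t\mapsto U_t$, is then a surjective homomorphism onto a $d$-torus with discrete kernel $\Lambda$, so $T=\R^d/\Lambda$.

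\emph{Lattice.} A character of $U_t$ on the coordinate $a$, taken projectively relative to $a_0$, is $t\mapsto e^{i\langle a-a_0,t\rangle}$. It descends to $T$, so $a-a_0\in\Lambda^*=:M=\operatorname{Hom}(T,S^1)$. The completed immersion is diagonal in these characters, hence toric. For generation, the subgroup $\Z(A-a_0)\subset M$ has full rank, and its annihilator in $T$ acts trivially on every homogeneous coordinate. Effectiveness of the action on $X$, which spans $\mathbb P^{|A|-1}$, then forces that annihilator to be trivial. Since a full-rank sublattice whose annihilator is trivial must be all of $M$, we conclude $\Z(A-a_0)=M$. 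I also expect to need to check that $T$ acts effectively on $X$ and not merely on $\mathbb P^{|A|-1}$. This follows because $X$ is full, so the orbit of $\Phi(0)$ already separates the characters.
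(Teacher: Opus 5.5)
Your proposal is correct and follows the paper's proof essentially step for step. The common steps are Hulin completion, the closure of the diagonal phase action as the torus, $\dim T\ge d$ from the immersed orbit $\Phi(i\R^d)$, $\dim T\le d$ from isotropy of Hamiltonian orbits plus effectiveness, and $\Z(A-a_0)=M$ via the kernel $\Lambda$ and the annihilator argument. The differences are minor: you globalize $\Phi(\C^d)\subset X$ by the identity theorem for analytic sets, so invariance under every $U_t$ is immediate (the paper uses small $t$ and a subgroup argument), and you take the Hamiltonian property from the ambient diagonal moment map rather than from $H^1(X;\R)=0$. Effectiveness is cleanest from the fact that $\Phi(0)$ has all coordinates nonzero, so its stabilizer in the projective diagonal torus is trivial.
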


\begin{proof}
The pulled-back Fubini--Study potential in \eqref{eq:nef-projective-map} is
$\psi(z+\bar z)$.  Hence its Hermitian matrix is
$\nabla^2\psi(z+\bar z)>0$, and
\[
 \Ric(\Phi^*\omega_{\FS})=\lambda\Phi^*\omega_{\FS};
\]
the constant and affine terms in
\eqref{eq:intro-fisher-determinant} are pluriharmonic.  The immersion is full
because the exponentials $e^{\langle a,z\rangle}$ are linearly independent,
as one sees by restricting a putative relation to a generic complex line.

After shrinking to an embedded germ, Hulin's completion theorem
\cite[main theorem and Proposition~4.5]{Hulin1996} gives a complete
real-analytic projective Einstein continuation in the same projective space.
Up to coverings, its completed geometric image is unique.  Hulin's no-double-point conclusion lets us identify it with a smooth
submanifold $X\subset\mathbb P^{|A|-1}$; it is compact because
$\lambda>0$, and it remains full.

For $t\in\R^d$, set
\[
 \rho(t)=\left[\operatorname{diag}
 \bigl(e^{i\langle a,t\rangle}\bigr)_{a\in A}\right]
 \in\operatorname{PU}(|A|).
\]
On $\C^d$, $\rho(t)\Phi(z)=\Phi(z+it)$.  Choose connected neighborhoods
$U\Subset V$ of $0$ on which $\Phi$ parametrizes the embedded germ in $X$.
For all sufficiently small $t$, one has $U+it\subset V$, and
\[
 \rho(t)\Phi(U)=\Phi(U+it)
\]
is a nonempty open submanifold of both $\rho(t)X$ and $X$.  The two connected
complex submanifolds therefore agree by analytic continuation.  The set of
$t$ preserving $X$ is a subgroup containing a neighborhood of zero, hence is
all of $\R^d$.

Let $T=\overline{\rho(\R^d)}$.  It is a compact connected torus preserving
$X$ and acts effectively.  Indeed, a projective unitary transformation that
fixes $X$ pointwise has $X$ in the union of its projectivized eigenspaces;
irreducibility and fullness force it to be scalar.  If $r=\dim T$, then
$r\ge d$: an equality $d\rho_0(v)=0$ with $v\ne0$ would make
$\langle a-a_0,v\rangle=0$ for every $a\in A$, contrary to minimality.

Positive Ricci curvature and the Bochner identity give
$H^1(X;\R)=0$, so the $T$-action is Hamiltonian.  At a principal point its
orbit has dimension $r$, by effectivity, and Hamiltonian torus orbits are
isotropic.  Thus $r\le d$, and consequently $r=d$.  The differential of
$\rho\colon\R^d\to T$ is now an isomorphism; its image is open and hence all
of $T$.  Its kernel $\Lambda$ is a full lattice and
$T\simeq\R^d/\Lambda$.  A principal $T$-orbit is now $d$-dimensional and
isotropic, hence Lagrangian.  The projective stabilizer of $X$ is algebraic;
because it contains $T$, it contains the algebraic complexification
$T_\C\simeq(\C^*)^d$.  The complexified orbit through a principal point has
complex dimension $d$ and is therefore open.  Thus $X$ is a smooth compact
toric manifold.

If $t\in\Lambda$, the projective transformation $\rho(t)$ is scalar.  Hence
$\langle a-a_0,t\rangle\in2\pi\Z$, which proves $A-a_0\subset M$.  Put
$L=\Z(A-a_0)$.  If $L$ had finite index greater than one in $M$, the
nontrivial annihilator $\operatorname{Hom}(M/L,S^1)\subset T$ would act by a
common scalar on every homogeneous coordinate, contradicting effectivity.
Thus $L=M$.

After fixing $a_0$, the action has the linear diagonal lift
\[
 [t]\longmapsto\operatorname{diag}
 \bigl(e^{i\langle a-a_0,t\rangle}\bigr)_{a\in A},
 \qquad [t]\in\R^d/\Lambda.
\]
All coordinates of $\Phi(0)$ are nonzero, and the weights $A-a_0$ generate
$M$, so its $T_\C$-stabilizer is trivial.  Hence $\Phi(0)$ lies on the open
complex-torus orbit, and the original carrier is the toric polynomial
$\sum_{a\in A}c_a\chi^{a-a_0}$ in the full intrinsic character lattice.
\end{proof}

\subsection{Classification and converse}
\label{subsec:nef-classification}

With the intrinsic torus in hand, the case $d\ge2$ follows from the compact
theory.  The case $d=1$ reduces to an elementary differential equation, and
the converse is a short computation.

\begin{proof}[Proof of Theorem~\ref{thm:intro-nef}]
Lemma~\ref{lem:nef-boundary} proves positivity and properness.  Suppose first
that $d\ge2$.  By Lemma~\ref{lem:intrinsic-phase-torus}, the original
coordinates give a full toric immersion of the compact smooth toric manifold
$X$, its metric is invariant under the intrinsic torus, and its carrier
differences generate the full character lattice.  Corollary
\ref{cor:intro-fixed-torus} therefore gives
\[
 \sum_{a\in A}c_a\chi^{a-a_0}
 =Kx^w\prod_j
 \left(1+\sum_{q=1}^{n_j}\gamma_{jq}x_{jq}\right)^{m_j},
 \qquad \frac{n_j+1}{m_j}=\lambda.
\]
The multinomial theorem identifies the complete product carrier and its
weights:
\[
 K\prod_j
 \binom{m_j}{r_{j0},r_{j1},\ldots,r_{jn_j}}
 \prod_{j,q}\gamma_{jq}^{r_{jq}},
 \qquad r_{j0}=m_j-\sum_qr_{jq}.
\]
The last product is an exponential tilt, and the character coordinates give
an invertible real affine change of sufficient statistic.  This proves
\eqref{eq:intro-nef-factorization} for $d\ge2$.

Suppose now that $d=1$.  Let $\alpha=\min A$, $\beta=\max A$,
$m(\theta)=\psi'(\theta)$, and
$V(m(\theta))=\psi''(\theta)$.  Logarithmic differentiation of
\eqref{eq:intro-fisher-determinant} gives
\[
 V'(m)=\frac{\psi'''(\theta)}{\psi''(\theta)}=b-\lambda m.
\]
Since $m'(\theta)=\psi''(\theta)>0$, the mean map is a diffeomorphism from
$\R$ onto $(\alpha,\beta)$.  Exponential concentration at the two extreme
carrier points extends $V$ continuously to
$V(\alpha)=V(\beta)=0$.  Therefore
\[
 V(m)=\frac\lambda2(m-\alpha)(\beta-m).
\]
Put $\ell=2/\lambda$, $h=(\beta-\alpha)/\ell$, and
$y=(m-\alpha)/(\beta-\alpha)$.  Then $y'=hy(1-y)$, whence
\begin{equation}
 Z(\theta)=K e^{\alpha\theta}(1+\gamma e^{h\theta})^\ell
 \label{eq:nef-one-dimensional-Z}
\end{equation}
for some $K,\gamma>0$.  With $x=\gamma e^{h\theta}$, the left side after
division by $Ke^{\alpha\theta}$ is a finite positive generalized power sum,
whereas the right side is $(1+x)^\ell$.  Analyticity at zero forces every
exponent to be a nonnegative integer: after subtracting the preceding integer
terms, a first noninteger exponent would give a noninteger order of
vanishing.  The left side is therefore a polynomial, so the binomial series
forces $\ell\in\Z_{>0}$.  Comparing coefficients in
\eqref{eq:nef-one-dimensional-Z} gives
\[
 A=\{\alpha,\alpha+h,\ldots,\alpha+\ell h\},\qquad
 c_{\alpha+rh}=K\binom{\ell}{r}\gamma^r.
\]
This is the $\ell$-trial binomial carrier up to an affine change and a tilt,
and $\lambda=2/\ell$.

Conversely, for one canonical block put
\[
 D_j=1+\sum_{q=1}^{n_j}e^{\theta_{jq}},\qquad
 \psi_j=m_j\log D_j.
\]
The matrix determinant lemma gives
\[
 \det\nabla^2\psi_j
 =m_j^{n_j}\exp\!\left(
   \sum_{q=1}^{n_j}\theta_{jq}
   -\frac{n_j+1}{m_j}\psi_j\right).
\]
For a product, the Hessian is block diagonal.  Under the matching rule,
\[
 \det\nabla^2\psi
 =\left(\prod_jm_j^{n_j}\right)
   \exp\!\left(\sum_{j,q}\theta_{jq}-\lambda\psi\right).
\]
For the general factorization \eqref{eq:intro-nef-factorization}, let $B$ be
the matrix whose rows are the $u_{jq}^{\mathsf T}$.  The exact parameters are
\[
 b=\lambda v+\sum_{j,q}u_{jq},\qquad
 C=(\det B)^2\left(\prod_jm_j^{n_j}\right)
   \left(\prod_{j,q}\gamma_{jq}\right)K^\lambda.
\]
Common rescaling, exponential tilting, and an invertible affine change of
sufficient statistic preserve the form of the identity.  Explicitly, under
$a\mapsto La+v$,
\[
 \widetilde\psi(\vartheta)=\langle v,\vartheta\rangle
   +\psi(L^{\mathsf T}\vartheta),\qquad
 \det\nabla^2\widetilde\psi=(\det L)^2
   \det\nabla^2\psi(L^{\mathsf T}\vartheta),
\]
so $\lambda$ is unchanged.  This proves the converse.  Finally,
Lemma~\ref{lem:nef-boundary} shows that the Jeffreys prior is precisely the
proper Diaconis--Ylvisaker prior with precision $\lambda/2$ and mean
hyperparameter $b/\lambda$.
\end{proof}

\begin{corollary}[Exponential-family and type formulation]
\label{cor:nef-statistics-vocabulary}
Let $\mu_F$ be a positive measure on $\R^d$ with finite support, not
concentrated on an affine hyperplane, and let $F=F(\mu_F)$ be the natural
exponential family that it generates.  In the notation of
Theorem~\ref{thm:intro-nef}, $\mu_F=\sum_{a\in A}c_a\delta_a$, its Laplace
transform is $L_{\mu_F}=Z$, and its cumulant function is $\psi=\log Z$.  Let
$M_F=\operatorname{int}\conv(A)$ be the mean domain, let
$\theta(m)=(\nabla\psi)^{-1}(m)$ be the natural parameter as a function of the
mean $m\in M_F$, and let $V_F(m)=\nabla^2\psi(\theta(m))$ be the variance
function.  Let $P^*$ be the family of images under $\nabla\psi$ of the
normalized Diaconis--Ylvisaker priors
$\exp\{t(\langle\theta,m_0\rangle-\psi(\theta))\}\,d\theta$ with $t>0$ and
$m_0\in M_F$, and let $\widetilde P$ be the family of normalized priors on
$M_F$ with densities
$\exp\{t(\langle\theta(m),m_0\rangle-\psi(\theta(m)))\}\,dm$, whenever these
are integrable \cite{ConsonniVeronese1992,Casalis1996}.  Then the following
are equivalent.
\renewcommand{\labelenumi}{(\arabic{enumi})}
\begin{enumerate}
\item $\widetilde P=P^*$.
\item There are $b\in\R^d$ and $\lambda,c\in\R$ such that
 \[
  \det V_F(m)=\exp\{\langle\theta(m),b\rangle-\lambda\,\psi(\theta(m))+c\}
  \qquad(m\in M_F);
 \]
 equivalently, \eqref{eq:intro-fisher-determinant} holds with $C=e^c$.
\item There are $b\in\R^d$ and $\lambda\in\R$ such that the skewness vector
 satisfies
 \[
  \sum_{i=1}^dV_F'(m)(e_i)\,e_i=b-\lambda m\qquad(m\in M_F),
 \]
 where $(e_i)$ is the standard basis.
\item The Jeffreys prior $\sqrt{\det\nabla^2\psi(\theta)}\,d\theta$ is
 proportional to a Diaconis--Ylvisaker prior
 \[
  \exp\{t(\langle\theta,m_0\rangle-\psi(\theta))\}\,d\theta,
  \qquad t>0,\quad m_0\in M_F.
 \]
\item There are $C>0$, $b\in\R^d$, and $\lambda\in\R$ such that the measure
 $\nu_0$ of Kokonendji--Seshadri satisfies
 \[
  \nu_0=C\,\delta_b*\mu_F^{*(d+1-\lambda)},
 \]
 where $\mu_F^{*s}$ denotes the positive measure whose Laplace transform is
 $L_{\mu_F}^s$.
\item There are positive integers $n_j,m_j$ with $\sum_jn_j=d$ and
 $(n_j+1)/m_j=\lambda$ for every $j$ such that, after an invertible affine
 change of statistic, $F$ is the natural exponential family of the
 independent product $\bigotimes_j\operatorname{Mult}(n_j+1,m_j)$, where
 $\operatorname{Mult}(n+1,m)$ denotes a multinomial law with $n+1$
 categories and $m$ trials.
\end{enumerate}
The constants $b$ and $\lambda$ are the same in (2), (3), and (5), $\lambda$
is the common ratio in (6), and the constant $C$ in (5) is $e^c$.  In (4) one
then has $t=\lambda/2$ and $m_0=b/\lambda$.
\end{corollary}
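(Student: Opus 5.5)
The plan is to use (2) as a hub: prove (2)$\Leftrightarrow$(1), (3), (4), (5) separately, and then close the circle with Theorem~\ref{thm:intro-nef}, which gives (2)$\Leftrightarrow$(6). Throughout we work in natural coordinates and transport to the mean parameter by $m=\nabla\psi(\theta)$, a diffeomorphism $\R^d\to M_F$ with Jacobian $dm=\det\nabla^2\psi(\theta)\,d\theta=\det V_F(m)\,d\theta$.

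\textbf{(1)$\Leftrightarrow$(2).} The image of the Diaconis--Ylvisaker prior under $\nabla\psi$ has density
\[
e^{t(\langle\theta(m),m_0\rangle-\psi(\theta(m)))}\,(\det V_F(m))^{-1}
\]
with respect to $dm$. So $P^*=\widetilde P$ means that for each $(t,m_0)$ there is $(t',m_0')$ with $(\det V_F)^{-1}$ equal, up to a constant, to $\exp\{(t'-t)(\dots)\}$. The implication (2)$\Rightarrow$(1) is then a reparametrization $t\mapsto t\pm\lambda$, $m_0\mapsto$ the appropriate barycentric combination with $b/\lambda$. Integrability is ensured by Lemma~\ref{lem:nef-boundary}, whose proof adapts to $\widetilde P$ because $\det V_F$ is bounded on finite supports. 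Conversely, one inclusion at a single parameter already forces $\log\det V_F$ to be affine in $(\theta(m),\psi(\theta(m)))$, which is (2).

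\textbf{(2)$\Leftrightarrow$(3).} Take the gradient in $m$ of $\log\det V_F$. With $\partial\theta/\partial m=V_F^{-1}$, differentiating (2) gives
\[
V_F^{-1}\,\nabla_m\log\det V_F=V_F^{-1}(b-\lambda m),
\]
because $\nabla_\theta\psi=m$. By Jacobi's formula, $\sum_i V_F'(m)(e_i)e_i$ is identified with $V_F\nabla_m\log\det V_F$, the standard skewness/trace identity $\partial_{\theta_i}\log\det\nabla^2\psi=\operatorname{tr}(\nabla^2\psi^{-1}\partial_i\nabla^2\psi)$. This yields (3). For the reverse direction integrate on the connected $M_F$, the integration constant becoming $c$.

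\textbf{(2)$\Leftrightarrow$(4).} Taking square roots, (2) is the statement that the Jeffreys density equals $e^{c/2}\exp\{\tfrac\lambda2(\langle\theta,b/\lambda\rangle-\psi)\}$. For (4)$\Rightarrow$(2), square. For (2)$\Rightarrow$(4), Lemma~\ref{lem:nef-boundary} supplies $\lambda>0$ and $b/\lambda\in M_F$, so $t=\lambda/2>0$ and $m_0=b/\lambda$ are admissible.

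\textbf{(2)$\Leftrightarrow$(5).} By Kokonendji--Seshadri, $\nu_0$ is the measure with Laplace transform $Z^{d+1}\det\nabla^2\psi$; in the finite case this is the atomic measure given by \eqref{eq:cauchy-binet}. Condition (2) reads $L_{\nu_0}=C\,e^{\langle b,\theta\rangle}Z^{d+1-\lambda}$, which is the Laplace transform of $C\delta_b*\mu_F^{*(d+1-\lambda)}$. Injectivity of the Laplace transform on positive measures gives the equivalence. The delicate point is that $\mu_F^{*s}$ for non-integer $s$ is defined through its Laplace transform, so one just notes that the identity forces $Z^{d+1-\lambda}$ to be a Laplace transform.

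\textbf{(2)$\Leftrightarrow$(6).} This is exactly Theorem~\ref{thm:intro-nef}, together with the remark that $\prod_j(1+\sum_q e^{\theta_{jq}})^{m_j}$ is the Laplace transform of $\bigotimes_j\operatorname{Mult}(n_j+1,m_j)$ in its last-$n_j$-counts statistic. The constant bookkeeping ($t=\lambda/2$, $m_0=b/\lambda$, $C=e^c$) is read off along the way.

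The main obstacle is (1)$\Rightarrow$(2). The definition of $\widetilde P=P^*$ must be made precise enough (equality as families of probability measures) that a single matching yields an affine relation. I would try first to fix $(t,m_0)$, write the matched $(t',m_0')$, and separate $\theta$-dependence using the linear independence of $1$, the $\theta_i$, and $\psi$. The last is non-affine because $\nabla^2\psi>0$.
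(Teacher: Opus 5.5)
Your treatment of (2)$\Leftrightarrow$(4), (2)$\Leftrightarrow$(5) and (2)$\Leftrightarrow$(6) follows the paper. For (4), Lemma~\ref{lem:nef-boundary} supplies $\lambda>0$ and $b/\lambda\in M_F$. For (5), you use the Kokonendji--Seshadri identity $L_{\nu_0}=Z^{d+1}\det\nabla^2\psi$ with injectivity of the Laplace transform, and the fractional power is realized as $C^{-1}\delta_{-b}*\nu_0$. For (6), you use Theorem~\ref{thm:intro-nef}.

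You differ in (1)$\Leftrightarrow$(2)$\Leftrightarrow$(3), which the paper does not prove. It cites Casalis for the three-way equivalence for every natural exponential family, and Hassairi for identifying the vector in (3) with $\nabla_\theta\log\det\nabla^2\psi$ at $\theta(m)$. Your direct derivation via $dm=\det V_F\,d\theta$ and the chain rule makes the corollary self-contained. In particular (2)$\Leftrightarrow$(3) is one line: (3) is the $\theta$-gradient of (2), and $\R^d$ is connected.

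Two small corrections there. Differentiating (2) in $m$ gives $\nabla_m\log\det V_F=V_F^{-1}(b-\lambda m)$; your display has a spurious $V_F^{-1}$ on the left. And the identity $\sum_iV_F'(m)(e_i)e_i=V_F\nabla_m\log\det V_F$ needs the symmetry of the third derivatives of $\psi$, not just Jacobi's formula. Also, as your own first paragraph shows, (1)$\Rightarrow$(2) is not the obstacle you name at the end. A single matched pair already reads $\det V_F=K\exp\{\langle\theta,tm_0-t'm_0'\rangle-(t-t')\psi\}$. Linear independence of $1,\theta_1,\ldots,\theta_d,\psi$ is needed only for uniqueness of $(b,\lambda,c)$, which gives the ``same constants'' clause.

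The direction that needs care once you drop the citation is (2)$\Rightarrow$(1), and your justification covers only half of it. Under (2), the image of the prior with parameters $(t,m_0)$ has $m$-density proportional to $\exp\{\langle\theta(m),tm_0-b\rangle-(t-\lambda)\psi(\theta(m))\}$. Its precision $t-\lambda$ is therefore $\le0$ whenever $t\le\lambda$. Your appeal to an adapted Lemma~\ref{lem:nef-boundary} only establishes integrability of the members of $\widetilde P$ with $t>0$ and $m_0\in M_F$, so it yields only $\widetilde P\subset P^*$.

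Worse, for $t=\lambda$ and $m_0\ne b/\lambda$, the image $\exp\{\langle\theta(m),\lambda m_0-b\rangle\}\,dm$ is not of the form $\exp\{t'(\langle\theta(m),m_0'\rangle-\psi)\}\,dm$ for any $(t',m_0')$. So you must index $\widetilde P$ by $(t,s)=(t,tm_0)$ over its full integrability domain. This is the reading under which (1) is actually true here; the paper avoids the issue by citing Casalis.

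With that reading the argument closes at once. In $\theta$-coordinates the $\widetilde P$-density with parameters $(t,s)$ equals $e^c\exp\{\langle\theta,s+b\rangle-(t+\lambda)\psi\}$, which is the Diaconis--Ylvisaker density with parameters $(t+\lambda,s+b)$. Since $\psi$ differs from the support function of $\conv(A)$ by a bounded amount, this is integrable exactly when $t+\lambda>0$ and $(s+b)/(t+\lambda)\in M_F$. That is exactly the parameter set of $P^*$.
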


\begin{proof}
Casalis writes $(B,b)$ for our $(b,-\lambda)$ and states the equivalence of
(1), (2), and (3) for every natural exponential family on $\R^d$
\cite[Introduction, item~4, pp.~1830--1831]{Casalis1996}.  The vector in (3)
is the skewness vector $\nabla\log\det\nabla^2\psi(\theta)$ evaluated at
$\theta=\theta(m)$ \cite[Corollary~2.4]{Hassairi1999}, so (3) is the gradient
form of (2).  The equivalence of (2) and (4) is the definition of the
Diaconis--Ylvisaker family together with Lemma~\ref{lem:nef-boundary}.  For
the equivalence of (2) and (5), recall that Kokonendji and Seshadri define
$\nu_0$ as the image of
\[
 \frac1{(d+1)!}
 \det\begin{pmatrix}1&1&\cdots&1\\X_0&X_1&\cdots&X_d\end{pmatrix}^2
 \mu_F(dX_0)\cdots\mu_F(dX_d)
\]
under $(X_0,\ldots,X_d)\mapsto X_0+\cdots+X_d$, and prove
\[
 L_{\nu_0}(\theta)=L_{\mu_F}(\theta)^{d+1}\det\nabla^2\psi(\theta)
\]
\cite[Theorem~2.2]{KokonendjiSeshadri1996}.  Taking Laplace transforms turns
(2) into (5); the positive measure $\mu_F^{*(d+1-\lambda)}$ exists because its
Laplace transform is then that of $C^{-1}\delta_{-b}*\nu_0$.  Conversely, (5)
gives (2).  Finally, the equivalence of (2) and (6) is
Theorem~\ref{thm:intro-nef}.  For the matched product,
$(d+1)m_j-(n_j+1)\in\mathbb Z_{\ge0}$ in every block, so
$\mu_F^{*(d+1-\lambda)}$ is, block by block, an integral convolution power of
a one-trial categorical law, although $d+1-\lambda$ itself need not be an
integer.  The only degenerate case is the one-trial categorical family, where
$\lambda=d+1$ and $\nu_0$ is a point mass.
\end{proof}

\begin{remark}[Types, products, and support conventions]
\label{rem:nef-types}
Two natural exponential families are of the same type when one is the image
of a convolution power of the other under an invertible affine map.  Affine
changes of statistic are already allowed in
Corollary~\ref{cor:nef-statistics-vocabulary}(6); passing to types also
allows convolution powers, which replace every $m_j$ by $pm_j$ and $\lambda$
by $\lambda/p$.  Hence there is exactly one type for each partition
$d=\sum_jn_j$.  With $G=\gcd_j(n_j+1)$, it contains the representative
$m_j=(n_j+1)/G$, for which $\lambda=G$, and every member of the type is an
affine image of a positive integral convolution power of that representative.
Thus the numbers of types in dimensions $1,2,3,4$ are $1,2,3,5$.  Among these
types only the one-block multinomial is simple quadratic; the multi-block
products are not among the classical simple-quadratic and Wishart examples.

The scalar linear part in Corollary~\ref{cor:nef-statistics-vocabulary}(3)
is essential.  For Bernoulli$\times$binomial$(2)$, with statistic
$(x_1,x_2)\in\{0,1\}\times\{0,1,2\}$, the skewness vector is
$(1-2m_1,\,1-m_2)$, which is affine in $m$, but (2) fails because the two
blocks have $\lambda=2$ and $\lambda=1$.  More generally, an independent
product of families satisfying (2) satisfies (2) exactly when all factors
have the same $\lambda$; the vectors $b$ of the factors are then
concatenated, because $\det V_F$ is multiplicative and the cumulants add.

Item (5) is the translation-and-power form of the relation ``$F(\nu_0)$ and
$F(\mu_F)$ are of the same type'' proved by Kokonendji and Seshadri for the
simple-quadratic class \cite[Theorem~3.1]{KokonendjiSeshadri1996}; their
proof produces exactly a translation and a convolution power.  In general,
the same-type relation only requires $\nu_0$ to be, up to a positive constant
and an exponential tilt, the image of a convolution power of $\mu_F$ under
some invertible affine map.  We do not classify finite-support families under
that weaker relation.

Finally, finite support means a finitely supported generating measure,
whereas bounded support may include continuous or mixed components.  The
bounded-support result of Ghribi--Masmoudi \cite{GhribiMasmoudi2010} is
therefore adjacent rather than a special case of Theorem~\ref{thm:intro-nef}.
In dimension one Morris's classification \cite{Morris1982} reduces the bounded case to the binomial family;
higher-dimensional continuous or mixed bounded supports remain outside the
present proof.
\end{remark}

\subsection{Relation to the statistical literature}
\label{subsec:nef-literature}

We close this section by placing Theorem~\ref{thm:intro-nef} and
Corollary~\ref{cor:nef-statistics-vocabulary} among earlier results on the
same identity.  Equation~\eqref{eq:intro-fisher-determinant} has a
substantial statistics history.  Casalis formulates the equivalent
determinant condition while
comparing the standard conjugate families on the natural and mean parameter
spaces and notes that the full class satisfying it was not known
\cite[Introduction, item~4, pp.~1830--1831]{Casalis1996}.  She also points
out that Guti\'errez-Pe\~na and Smith independently obtained a similar
statement.  They
study when transformations of the canonical or mean parameter preserve the
standard conjugate form, give multivariate extensions, and relate the Jeffreys
prior to that family \cite{GutierrezPenaSmith1995,GutierrezPenaSmith1997}; see also
the published correction to the 1995 article
\cite{GutierrezPenaSmith1996Correction}.

Several results are closer to ours.  Kokonendji and Seshadri give the
determinant-weighted convolution identity behind \eqref{eq:cauchy-binet},
prove that $F(\nu_0)$ and $F(\mu_F)$ are of the same type for every simple
quadratic family \cite[Theorem~3.1]{KokonendjiSeshadri1996}, and explicitly
compute the multinomial determinant law
\cite[Theorem~2.2 and Section~3.1]{KokonendjiSeshadri1996}.
Consonni--Veronese introduced the comparison of the two conjugate families
in dimension one \cite{ConsonniVeronese1992}.  Hassairi identifies the vector
in Casalis's third criterion with the skewness vector and, for a class of
generating measures that includes the infinitely divisible ones, reformulates
$\widetilde P=P^*$ through his generalized variance transform
\cite[Corollaries~2.4 and~2.5]{Hassairi1999}.
Druilhet--Pommeret study invariant Jeffreys-conjugate priors
\cite{DruilhetPommeret2012}.  For Hessian geometry, see Shima
\cite{Shima2007}; Furuhata and Kurose classify Hessian manifolds of
nonpositive constant Hessian sectional curvature \cite{FuruhataKurose2013}.
Eaves and Chang prove, in dimension one, that the Jeffreys prior is conjugate
exactly for quadratic variance functions, and treat the multinomial family
separately \cite[Proposition~2.1]{EavesChang1992}.
Ghribi and Masmoudi prove a bounded-support converse for the normalized
one-trial categorical generalized-variance identity
\cite[Theorem~3.1]{GhribiMasmoudi2010}.  The one-dimensional binomial case
also belongs to the classical quadratic-variance classification
\cite{Morris1982}.

\section{The Manno--Salis germ and univalent classification}
\label{sec:local}

This section proves Corollary~\ref{cor:intro-manno-salis}.  The idea is to
complete the local Einstein germ to a compact submanifold, extend the torus
action to the completion, and apply Theorem~\ref{thm:intro-global} there.  We
then explain why univalence cannot be dropped and give a direct algebraic
check of the normalized Manno--Salis polynomial.

\subsection{Completion and extension of the torus action}

The proof has three steps: completion of the germ, extension of the torus
action, and restriction back to the original domain.  The case $d=1$ is
elementary and is treated first.

\begin{proof}[Proof of Corollary~\ref{cor:intro-manno-salis}]
When $d=1$, the germ assertion is the rank-one case of
\cite[Theorem~1.1]{MannoSalis2026}; equivalently, it follows from the
powered-binomial classification \cite[Proposition~4.1]{DiScalaSombra2025}
after local torification.  In the univalent case, the local image lies in
the corresponding rational normal curve.  The homogeneous equations of
that curve vanish on a nonempty open subset of the connected image and
hence everywhere; the immersion therefore identifies all of $Y$ with an
open subset of the curve.  This proves both assertions when $d=1$.
Assume henceforth that $d\ge2$.

Shrink to a connected invariant neighborhood $V$ of the fixed point on which
$\varphi$ is an embedding.  Hulin's completion theorem extends this
projective submanifold germ to a complete real-analytic K\"ahler--Einstein
submanifold $(\overline Y,\overline\omega)$ of the same projective space
\cite{Hulin1996}.  This is the completion step used in Lemma~2.13 of the
preprint \cite{MannoSalis2024}, numbered Lemma~2.14 in the published version
\cite{MannoSalis2026}.  The completed immersion remains full, since a
hyperplane containing its image would also contain the original open germ.
Since $\lambda>0$, Bonnet--Myers makes $\overline Y$
compact, and Kobayashi's theorem for compact K\"ahler manifolds with positive
Ricci tensor makes it simply connected \cite{Kobayashi1961}.

Next we check that the given torus action survives this completion.  Choose a
basis of the integral lattice of $T^d$ and let $\xi_1,\ldots,\xi_d$ be the
corresponding Killing fields near the fixed point.  Nomizu's extension
theorem uniquely extends each $\xi_i$ to a global Killing field
$\overline\xi_i$ on the simply connected real-analytic manifold
$\overline Y$ \cite{Nomizu1960}.  Their brackets vanish globally because
they vanish on a nonempty open set.  The extensions are real holomorphic:
$\mathcal L_{\overline\xi_i}J$ is analytic and vanishes on that same open set.
Compactness makes all their flows complete.

Normalize the integral lattice so that its elements have time-one flow.  For
each $\ell\in\Z^d$, the corresponding global time-one isometry is the identity
on $V$ and therefore on all of $\overline Y$: an isometry
of a connected Riemannian manifold is determined by its value and derivative
at one point.  The commuting $\R^d$-action consequently factors through
$\R^d/\Z^d=T^d$.  It is effective, since a global kernel element would lie in
the kernel of the original action, and it fixes the original fixed point.
Finally $H^1(\overline Y;\R)=0$.  Each closed one-form
$\iota_{\overline\xi_i}\overline\omega$ is therefore exact, so the action is
Hamiltonian.  Hence $(\overline Y,\overline\omega)$ is a smooth compact toric
K\"ahler manifold.  This expands the torus-extension step stated in
Lemma~2.14 of the preprint \cite{MannoSalis2024}, numbered Lemma~2.15 in the
published version \cite{MannoSalis2026}.

Theorem~\ref{thm:intro-global} applies to $\overline Y$.  Restricting its
holomorphic isometry to $V$ proves the germ assertion.  If $\varphi$ is
univalent in the sense stated in the corollary, the connected embedded image
$\varphi(Y)$ continues from $\varphi(V)$ inside the Hulin completion.  The
two submanifolds have the same dimension, so this inclusion is open; hence
the product identification restricts to all of $Y$.  In the Riemannian
normalization $\Ric(g)=2g$ of \cite[Conjecture~3]{MannoSalis2024}, our form
constant is $\lambda=1$, and the matching equation reads $m_j=n_j+1$.
\end{proof}

\begin{remark}[The covering qualification is necessary]
\label{rem:covering-obstruction}
The literal open-subset conclusion for an arbitrary abstract immersed domain
fails without a univalence condition.  Here is a concrete construction.  Let
$\nu\colon\mathbb P^2\to\Delta$ be the standard moment map, choose a small
closed disk $D\subset\operatorname{int}\Delta$, and put
$B=\Delta\setminus D$ and $U=\nu^{-1}(B)$.  Thus $B$ is relatively open in
$\Delta$, and $U$ is a connected
$T^2$-invariant open subset containing the torus fixed points.  The moment
map has the continuous section
\[
 (r_1,r_2)\longmapsto
 [\sqrt{1-r_1-r_2}:\sqrt{r_1}:\sqrt{r_2}],
\]
so $\pi_1(B)\simeq\Z$ injects into $\pi_1(U)$.  In particular the universal
cover $\pi\colon\widetilde U\to U$ is nontrivial.

Let $\mathcal V_3\colon\mathbb P^2\to\mathbb P^9$ be the complete cubic
Veronese immersion.  Pull back $3\omega_{\FS}$ and compose $\pi$ with
$U\hookrightarrow\mathbb P^2$ and $\mathcal V_3$.  This is a full K\"ahler
immersion of the K\"ahler--Einstein manifold $\widetilde U$ into
$\mathbb P^9$, now in the normalized regime
$\Ric(3\omega_{\FS})=3\omega_{\FS}=1\cdot(3\omega_{\FS})$.  The universal
cover $\R^2$ of $T^2$ acts on $\widetilde U$ by the uniquely lifted flows that
fix a chosen lift of a torus fixed point.  Every lattice element acts as a
deck transformation fixing that point and is therefore the identity, so the
action descends to $T^2$.  It remains effective and is Hamiltonian, with the
moment map $3\nu\circ\pi$ (up to an additive constant).  In complex
dimension two, the normalized product
models are $(\mathbb P^2,3\omega_{\FS})$ and
$(\mathbb P^1\times\mathbb P^1,
2\omega_{\FS}\oplus2\omega_{\FS})$.  If $\widetilde U$ were holomorphically
isometric to an open subset of the first, composing that identification with
$\mathcal V_3$ and applying Calabi rigidity in $\mathbb P^9$ would make the
covering immersion a unitary transform of an injective map, a contradiction.
For the second model, its complete $(2,2)$ Veronese--Segre immersion is full
in $\mathbb P^8$, whereas the covering immersion is full in $\mathbb P^9$;
Calabi rigidity forbids two full projective immersions of the same connected
K\"ahler manifold in different target dimensions.  Thus the literal global
open-subset form fails even under the normalized hypothesis of
Conjecture~3 in \cite{MannoSalis2024}.
This is the covering ambiguity explicitly retained in Hulin's completion
theorem \cite[p.~288]{Hulin1996}.
\end{remark}

\subsection{A direct normalized-polynomial certificate}

There is also a purely algebraic way to see why the normalized polynomial in
the Manno--Salis formulation enters the scope of the bivariate theorem.  Put
$\mathbf1=(1,\ldots,1)$.

\begin{lemma}[Cauchy--Binet vertex certificate]
\label{lem:vertex-certificate}
Let
\[
 P(x)=\sum_{a\in\mathcal A}c_ax^a\in\C[x_1^{\pm1},\ldots,x_d^{\pm1}]
\]
have effective rank $d$, with $0\in\mathcal A$ and every $c_a\ne0$, and set
$Q=\NP(P)$.  Compute $\mu(P)$ in \eqref{eq:def-mu} using the support
translation $m_0=0$, so that the Cauchy--Binet identity
\eqref{eq:cauchy-binet} holds with exactly the displayed exponents.  Suppose
\begin{equation}
 \mu(P)=x^{\mathbf1}P^d
 \label{eq:normalized-mu}
\end{equation}
and $R=Q-\mathbf1$ is Delzant and reflexive.  Then $\mathcal A$ is
unimodular.
\end{lemma}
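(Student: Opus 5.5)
The plan is to read off unimodularity vertex by vertex, by comparing the coefficient of one extremal monomial on the two sides of \eqref{eq:normalized-mu}. On the left we use the Cauchy--Binet expansion \eqref{eq:cauchy-binet}. On the right, the coefficient of the relevant vertex monomial of $x^{\mathbf 1}P^d$ is a pure power of a single coefficient of $P$, hence nonzero.

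First I would set up local coordinates at a vertex. Fix a vertex $v$ of $Q$; then $v-\mathbf 1$ is a vertex of $R$. Because $R$ is Delzant, the primitive edge directions $e_1,\ldots,e_d$ at $v$ form a lattice basis. Let $u_1,\ldots,u_d$ be the dual basis. The facets of $R$ through $v-\mathbf 1$ have inner normals $u_i$, and reflexivity writes them as $\langle u_i,y\rangle\ge-1$. Hence $\langle u_i,v-\mathbf 1\rangle=-1$ for every $i$. Put $t_i(y)=\langle u_i,y-v\rangle$ and $\ell=\sum_iu_i$. Then:
\begin{itemize}
\item $Q\subset\{t_i\ge0\}$;
\item $t_i(\mathbf 1)=1$ for all $i$, so $\ell(\mathbf 1)-\ell(v)=d$;
\item for a lattice point $a\in Q$, $\ell(a)-\ell(v)=\sum_it_i(a)$ is a nonnegative integer, zero only at $a=v$, and equal to one exactly when $a=v+e_i$ for some $i$.
\end{itemize}

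Next I would compare coefficients of $x^{\mathbf 1+dv}$. The vertex $v$ minimizes $\ell$ uniquely on $Q$, so on the right of \eqref{eq:normalized-mu} this monomial occurs with coefficient $c_v^d\ne0$; in particular $v\in\mathcal A$. On the left, by \eqref{eq:cauchy-binet} with translation $m_0=0$, the monomial $x^{\sum_{a\in B}a}$ comes only from $(d+1)$-subsets $B\subset\mathcal A$ that are affinely independent, since otherwise the determinant vanishes. For such a monomial to equal $x^{\mathbf 1+dv}$ we need
\[
 \sum_{a\in B}\bigl(\ell(a)-\ell(v)\bigr)=\ell(\mathbf 1)-\ell(v)=d.
\]
If $v\notin B$, every term is at least $1$, so the sum is at least $d+1$. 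If $v\in B$, the other $d$ points contribute at least $1$ each, so equality forces each of them to be some $v+e_i$. Affine independence of $B$ forces these $d$ points to be distinct, hence exactly $v+e_1,\ldots,v+e_d$. The only possible contributing subset is therefore $B=\{v,v+e_1,\ldots,v+e_d\}$.

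Since the left coefficient must equal $c_v^d\neq0$, this $B$ must actually lie in $\mathcal A$. Thus the nearest support points on the edges at $v$ are $v+e_i$, which differ from $v$ by a lattice basis. As $v$ was an arbitrary vertex, $\mathcal A$ is unimodular.

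The main obstacle I expect is the bookkeeping of normalizations, not any deep step. I must check that the translation $m_0=0$ makes the exponents in \eqref{eq:cauchy-binet} literally $\sum_{a\in B}a$, with no extra unit shift, so that $x^{\mathbf 1+dv}$ is compared exactly. I must also confirm that reflexivity is used precisely to give $t_i(\mathbf 1)=1$ at every vertex, which is what makes the target $\ell$-excess equal to $d$ rather than something larger that other subsets could reach.
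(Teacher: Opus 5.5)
Your proposal is correct and follows essentially the paper's proof. Both arguments use reflexivity to write $\mathbf 1-v$ as the sum of the primitive edge vectors at $v$, and both note that the right side has coefficient $c_v^d\ne0$ at $x^{\mathbf 1+dv}$. Tangent-cone coordinates then force the only contributing Cauchy--Binet subset to be $\{v,v+e_1,\ldots,v+e_d\}$. The one cosmetic difference is that you count with the single functional $\ell=\sum_iu_i$ and distinctness of the points of $B$, whereas the paper tracks each coordinate sum $\sum_jk_{ji}=1$ separately.
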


\begin{proof}
Fix a vertex $v$ of $Q$, put $r=v-\mathbf1$, and let
$u_1,\ldots,u_d$ be the primitive outgoing edge basis of $R$ at $r$.  If
$\eta_1,\ldots,\eta_d$ are the primitive normals of the incident facets,
oriented so that their inequalities are at most one, smooth reflexivity gives
\[
 \langle\eta_i,r\rangle=1,
 \qquad
 \langle\eta_i,u_j\rangle=-\delta_{ij}.
\]
It follows that
\begin{equation}
 \mathbf1-v=-r=\sum_{i=1}^du_i.
 \label{eq:reflexive-vertex-sum}
\end{equation}

The coefficient of $x^{\mathbf1+dv}$ on the right of
\eqref{eq:normalized-mu} is $c_v^d\ne0$.  Indeed, choose a linear functional
uniquely minimized on $Q$ at $v$; if a sum of $d$ support points equals
$dv$, equality in the resulting $d$ lower bounds forces every summand to be
$v$.
The Cauchy--Binet expansion \eqref{eq:cauchy-binet} therefore contains an
affinely independent $(d+1)$-subset $B=\{a_0,\ldots,a_d\}\subset\mathcal A$
with
\begin{equation}
 \sum_{j=0}^da_j=\mathbf1+dv.
 \label{eq:cb-vertex-sum}
\end{equation}
Every lattice point in the tangent cone has a unique expression
\[
 a_j-v=\sum_{i=1}^dk_{ji}u_i,
 \qquad k_{ji}\in\Z_{\ge0}.
\]
Equations \eqref{eq:reflexive-vertex-sum} and
\eqref{eq:cb-vertex-sum} give $\sum_jk_{ji}=1$ for each $i$.  Thus there are
exactly $d$ unit entries among the $d+1$ distinct row vectors
$(k_{j1},\ldots,k_{jd})$.  Affine independence forces these rows to be
$0,e_1,\ldots,e_d$.  Hence
\[
 \{v,v+u_1,\ldots,v+u_d\}\subset\mathcal A.
\]
This holds at every vertex, which is precisely unimodularity of $\mathcal A$.
\end{proof}

\begin{corollary}[Normalized Manno--Salis polynomial]
\label{cor:normalized-polynomial}
Let $P=\sum_{a\in\mathcal A}c_ax^a\in\R[x_1,\ldots,x_d]$ have finite
full-rank support $\mathcal A\subset\mathbb N^d$, with $c_a>0$ for every
$a\in\mathcal A$ and
$c_0=c_{e_i}=1$, and suppose
\begin{equation}
 u(t)=\log P(e^{t_1},\ldots,e^{t_d})-\sum_{i=1}^dt_i,
 \qquad \det D^2u=e^{-u}.
 \label{eq:ms-normalized-equation}
\end{equation}
Then there is a partition
$\{1,\ldots,d\}=B_1\sqcup\cdots\sqcup B_k$, with $n_j=|B_j|$, such that
\begin{equation}
 P(x)=\prod_{j=1}^k
 \left(1+\frac{1}{n_j+1}\sum_{i\in B_j}x_i\right)^{n_j+1}.
 \label{eq:normalized-polynomial-form}
\end{equation}
\end{corollary}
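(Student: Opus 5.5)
The plan is to reduce Corollary~\ref{cor:normalized-polynomial} to Theorem~\ref{thm:intro-nef} with carrier $A=\mathcal A\subset\R^d$, and then pin down the free parameters using the normalizations.

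\emph{Step 1: match the equation.} Put $\psi(t)=\log P(e^{t})$. Then $u=\psi-\langle\mathbf 1,t\rangle$, so $D^2u=\nabla^2\psi$. The equation $\det D^2u=e^{-u}$ therefore becomes
\[
 \det\nabla^2\psi=\exp\{\langle\mathbf 1,t\rangle-\psi\},
\]
which is \eqref{eq:intro-fisher-determinant} with $C=1$, $b=\mathbf 1$ and $\lambda=1$. Since $\mathcal A$ has full rank and the $c_a$ are positive, Theorem~\ref{thm:intro-nef} applies. It gives $Z=P(e^t)$ in the form \eqref{eq:intro-nef-factorization} with $m_j=n_j+1$.

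\emph{Step 2: identify the directions.} Expanding the product shows that $\conv(\mathcal A)$ is the translate by $v$ of $\prod_j(n_j+1)\conv(0,u_{j1},\ldots,u_{jn_j})$. By hypothesis $\mathcal A\subset\mathbb N^d$ and $0,e_1,\ldots,e_d\in\mathcal A$. Two comparisons then fix the picture.
\begin{enumerate}
\item The support of the product consists of the points $v+\sum r_{jq}u_{jq}$ with $\sum_q r_{jq}\le n_j+1$. In particular it is a lattice set generated by the $u_{jq}$.
\item Each $e_i-0$ must be an integer combination of the $u_{jq}$.
\end{enumerate}

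I would use a vertex argument. The point $0$ lies in $\mathcal A\subset\mathbb N^d$ and minimizes every coordinate, and every coordinate functional is nonnegative on $\mathcal A$. Hence $0$ is a vertex, indeed the unique minimizer of $\langle\mathbf 1,\cdot\rangle$. The vertices of the product polytope are $v+\sum_j(n_j+1)u_{j,\alpha_j}$, with $u_{j0}=0$.

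Choose a translation so that $0$ is the vertex with all $\alpha_j=0$, which gives $v=0$. Possibly we must first re-choose which summand of each block plays the role of ``$1$''; rewriting $1+\sum_q\gamma_qe^{\langle u_q,t\rangle}$ about another term is harmless. The primitive edge directions at this vertex are then the $u_{jq}$, and the points nearest to $0$ along the edges are $u_{jq}\in\mathcal A$.

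The $e_i$ lie in $\mathcal A$ with $\langle\mathbf 1,e_i\rangle=1$, the minimal positive value. So each $e_i$ must be one of the edge-adjacent points $u_{jq}$. Indeed, any other support point is $\sum r_{jq}u_{jq}$ with $\sum r\ge2$. Its $\mathbf 1$-pairing is a sum of at least two values $\langle\mathbf 1,u_{jq}\rangle$, and each such value is a positive integer, since $u_{jq}\in\mathcal A\setminus\{0\}$ lies in $\mathbb N^d$. So the pairing is at least $2$.

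Counting shows that there are $d$ of the $u_{jq}$ and $d$ of the $e_i$. Hence $\{u_{jq}\}=\{e_1,\ldots,e_d\}$. This defines the partition $B_j=\{i: e_i=u_{jq}\text{ for some }q\}$.

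\emph{Step 3: fix the constants.} Now $P(x)=K\prod_j(1+\sum_{i\in B_j}\gamma_ix_i)^{n_j+1}$. The constant term gives $K=1$. The coefficient of $x_i$ is $(n_j+1)\gamma_i$, and setting it equal to $1$ gives $\gamma_i=1/(n_j+1)$. This yields \eqref{eq:normalized-polynomial-form}.

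For consistency, I would check that $C=1$ and $b=\mathbf 1$ agree with the converse formula $b=\lambda v+\sum u_{jq}=\mathbf 1$. The constant is
\[
 C=\prod_j (n_j+1)^{n_j}\prod_i\gamma_i=\prod_j (n_j+1)^{n_j}(n_j+1)^{-n_j}=1,
\]
as required.

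\emph{Main obstacle.} The delicate step is Step 2: showing that the vertex $0$ and its edge neighbours coincide with the canonical product vertex and the $u_{jq}$. This requires handling the reparametrization of each block about a different vertex of its simplex, and confirming that the $e_i$ are exactly the edge-adjacent support points. The positivity of $\langle\mathbf 1,\cdot\rangle$ on $\mathcal A\setminus\{0\}$ should settle this.
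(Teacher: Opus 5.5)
Your proof is correct, and it takes a genuinely different route from the paper.

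\textbf{Your route.} You specialize Theorem~\ref{thm:intro-nef} directly. With $\psi=\log P(e^t)$, the equation is \eqref{eq:intro-fisher-determinant} with $(C,b,\lambda)=(1,\mathbf 1,1)$. The hypotheses of that theorem hold, and there is no circularity, since Section~\ref{sec:finite-nef} never uses Section~\ref{sec:local}. The theorem then gives the full factorization with $m_j=n_j+1$. After you re-base each block at the vertex $0$, your height argument with $\langle\mathbf 1,\cdot\rangle$ does the rest:
\begin{itemize}
\item every $u_{jq}$ lies in $\mathcal A\setminus\{0\}\subset\mathbb N^d$, so it has positive integral height;
\item each $e_i$ has height $1$ and must therefore equal some $u_{jq}$;
\item counting gives $\{u_{jq}\}=\{e_i\}$.
\end{itemize}
The constant and linear coefficients then force $K=1$ and $\gamma_i=1/(n_j+1)$. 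Two small remarks. You do not need the word ``primitive'' for the edge directions; only $u_{jq}\in\mathcal A$ is used, and it follows from the expanded support. The final check of $C$ and $b$ is consistent but not logically required.

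\textbf{The paper's route.} The paper works at the level of the polynomial identity $\mu(P)=x^{\mathbf 1}P^d$.
\begin{itemize}
\item It imports Delzant reflexivity of $\NP(P)-\mathbf 1$ from Manno--Salis.
\item It upgrades this to unimodularity of the actual support via the new vertex certificate (Lemma~\ref{lem:vertex-certificate}), which gives GEC.
\item Theorem~\ref{thm:intro-surface}, Yu--Masuda, and lattice untwisting then identify only the Newton polytope.
\item To recover the coefficients it needs Manno--Salis's uniqueness theorem for the real Monge--Amp\`ere equation, and it treats $d=1$ separately.
\end{itemize}

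\textbf{Comparison.} Your argument is shorter and uniform in $d$, and it avoids three external Manno--Salis inputs (reflexivity, the barycenter condition, Monge--Amp\`ere uniqueness). The price is that it inherits the analytic machinery behind Theorem~\ref{thm:intro-nef}: Hulin completion, the intrinsic phase torus, and Bando--Mabuchi and Calabi rigidity. The paper's proof is instead meant as a second, essentially algebraic view. It shows directly, without completing the germ, why the normalized polynomial falls under GEC and the bivariate classification.
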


\begin{proof}
Equation~\eqref{eq:ms-normalized-equation} is exactly
$\mu(P)=x^{\mathbf1}P^d$.  Proposition~2.15 of the preprint
\cite{MannoSalis2024}, numbered Proposition~2.16 in the published version
\cite{MannoSalis2026}, says that $R=\NP(P)-\mathbf1$ is Delzant and
reflexive.
Lemma~\ref{lem:vertex-certificate} makes $\supp(P)$ unimodular, while
\eqref{eq:normalized-mu} makes $P$ GEC.
When $d=1$, the same conclusion is obtained before invoking the rank-two
theorem: reflexivity gives $\NP(P)=[0,2]$, and the rank-one powered-binomial
classification \cite[Proposition~4.1]{DiScalaSombra2025}, together with
$c_0=c_1=1$, gives $P(x)=(1+x/2)^2$.  Thus the remainder of the argument may
be read with $d\ge2$.

For $d\ge2$, Theorem~\ref{thm:intro-surface}, Yu--Masuda's criterion
\cite[Theorem~2.1]{YuMasuda2021}, and
Lemma~\ref{lem:lattice-untwisting} show that $\NP(P)$ is a product of
dilated simplices.  Since $\NP(P)$ lies in the positive orthant and contains
$0,e_1,\ldots,e_d$, its tangent cone at $0$ is the positive orthant.  Thus
its primitive edges there are the coordinate rays, and the product factors
align with a partition of the variables.  Reflexivity of the translate
forces the dilation in an $n_j$-dimensional block to be $n_j+1$.

The closure of the gradient image of $u$ is $R$; the existence equation also
gives the zero-barycenter condition recorded in Lemma~2.16 of the preprint
\cite{MannoSalis2024}.  The right-hand side $P_0$ of
\eqref{eq:normalized-polynomial-form} gives a solution $u_0$ of
\eqref{eq:ms-normalized-equation} with the same gradient image.  Uniqueness
for this real Monge--Amp\`ere equation
\cite[Proposition~2.17]{MannoSalis2024} (Proposition~2.18 in
\cite{MannoSalis2026}) gives $u(t)=u_0(t+c)$ for some
$c\in\R^d$.  Hence
\[
 P(x)=e^{-\sum_i c_i}P_0(e^{c_1}x_1,\ldots,e^{c_d}x_d).
\]
The constant normalization first gives $\sum_i c_i=0$, and the linear
normalizations then give $c_i=0$ for every $i$.  This proves
\eqref{eq:normalized-polynomial-form}.
\end{proof}

\begin{remark}[Arbitrary Einstein constants]
For $\lambda\ne1$ one cannot reduce to
Corollary~\ref{cor:normalized-polynomial} by rescaling
(Remark~\ref{rem:normalization-caveat}); the statements for arbitrary
Einstein constants therefore use completion and
Theorem~\ref{thm:intro-global}.
\end{remark}

\section{Further comparisons, limitations, and open problems}
\label{sec:scope}

This final section looks at the results from outside.  We compare
Theorem~\ref{thm:intro-surface} with linear precision in algebraic
statistics, compare the geometric results with the work of Manno and Salis,
and end with the limitations of the method and some open problems.

Two remarks on the algebraic side come first.  Before the present
classification, Di Scala and Sombra had excluded particular trapezoidal and
hexagonal faces and used those exclusions in several higher-dimensional
families \cite{DiScalaSombra2025}; the signed ledger replaces such case
analysis by one uniform argument.  Also, the exponent three in
$\mu(p)\mid p^3$ is a consequence of the classification, not of a general
derivative-order estimate.

\subsection{Likelihood maps and linear precision}

The polygons in Theorem~\ref{thm:intro-surface} also appear in the theory of
toric patches with linear precision.  The two conditions are nevertheless
different, and this subsection makes the comparison precise.  Write
\[
 \mathcal L_p(x,y)=
 \left(\frac{x\partial_xp}{p},\frac{y\partial_yp}{p}\right)
\]
for the logarithmic polar, or likelihood, map.  Rational linear precision of
the associated toric patch is equivalent to birationality of this map and
hence to maximum-likelihood degree one
\cite[Theorem~3.9 and Proposition~4.1]{GarciaPuenteSottile2010}; see also
\cite[Sections~5, 6, and 8]{ClarkeCox2020}.  Our unimodularity hypothesis
makes the exponent configuration primitive.  For coefficients outside the
positive real locus, ``ML degree'' below denotes the algebraic degree of
$\mathcal L_p$.

\begin{proposition}[Rank-two GEC implies ML degree one]
\label{prop:gec-ml-degree}
Every rank-two unimodular GEC polynomial in
Theorem~\ref{thm:intro-surface} has maximum-likelihood degree one.  The
converse is false, even for a positive polynomial with full lattice-point
support in a smooth polygon.
\end{proposition}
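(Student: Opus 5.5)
By Theorem~\ref{thm:intro-surface}, after a Laurent unit and an integral monomial change of coordinates, every rank-two unimodular GEC polynomial is either $p=\chi^u f^m$ with $f=\alpha_0+\alpha_1x+\alpha_2y$, or $p=\chi^u h_1^rh_2^s$ with $h_1=\alpha_0+\alpha_1x$ and $h_2=\beta_0+\beta_1y$. I will first check that the degree of $\mathcal L_p$ is unchanged by these normalizations. Multiplying $p$ by $c\chi^u$ only translates $\mathcal L_p$ by $u$. An integral monomial change $\theta\mapsto G\theta$ with $G\in\operatorname{GL}_2(\Z)$ is a birational automorphism of the torus, and it composes $\mathcal L_p$ with the linear map $G^{\mathsf T}$. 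Birationality of $\mathcal L_p$ is therefore preserved, and the degree can be computed in the normal form.

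In the normal form I will invert $\mathcal L_p$ explicitly. For the trinomial power,
\[
 \mathcal L_p=u+m\Bigl(\frac{\alpha_1x}{f},\frac{\alpha_2y}{f}\Bigr).
\]
Given a target $(s,t)$, put $\sigma=(s-u_1)/m$ and $\tau=(t-u_2)/m$. Then
\[
 \frac{\alpha_1x}{\alpha_0}=\frac{\sigma}{1-\sigma-\tau},\qquad
 \frac{\alpha_2y}{\alpha_0}=\frac{\tau}{1-\sigma-\tau},
\]
which is a rational inverse, so the degree is one.

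For the binomial product the map splits into coordinates:
\[
 \mathcal L_p=u+\Bigl(\frac{r\alpha_1x}{h_1},\frac{s\beta_1y}{h_2}\Bigr).
\]
Each coordinate is a Möbius transformation in one variable, hence invertible, so the degree is again one. The only care needed is that all displayed coefficients are nonzero; this makes the maps dominant and the inverses well defined.

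For the failure of the converse I need a positive polynomial with full lattice-point support on a smooth polygon that has ML degree one but is not GEC. The natural candidate is a Delzant trapezoid, which the classification excludes as a GEC Newton polygon. The trapezoid is the Hirzebruch-type polygon $\conv\{(0,0),(2,0),(0,1),(1,1)\}$. I take
\[
 p=(1+x)\bigl(y+(1+x)\bigr)=(1+x)^2+y(1+x),
\]
whose support is all six lattice points of that polygon, with positive coefficients.

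To show ML degree one, set $w=y/(1+x)$, so that $p=(1+x)^2(1+w)$. Then $\mathcal L_p$ becomes a triangular rational map in the variables $(x,w)$, which I invert successively. More generally, products of a binomial power with a trinomial power in adapted coordinates should be birational, in line with the known linear-precision trapezoids; I will cite \cite{GarciaPuenteSottile2010} for that class if the direct computation is messy. Non-GEC is immediate, since Theorem~\ref{thm:intro-surface} forces a GEC Newton polygon to be a triangle or a parallelogram, and a trapezoid is neither.

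The main obstacle is choosing the coefficients of the example so that the likelihood map is genuinely birational and not merely of small degree. If the direct triangular inversion fails, I will fall back on the Garc\'ia-Puente--Sottile criterion: a trapezoid patch with binomial-weighted coefficients has rational linear precision, and by their Theorem~3.9 and Proposition~4.1 this is equivalent to ML degree one.
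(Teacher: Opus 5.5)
Your proposal is correct and follows the paper's proof: you use the same normal-form inversions for the first assertion and the same trapezoid example $p=(1+x)(1+x+y)$ for the second, and your substitution $w=y/(1+x)$ gives $(u,v)=\bigl(\tfrac{x}{1+x}\cdot\tfrac{2+w}{1+w},\,\tfrac{w}{1+w}\bigr)$, whose triangular inversion yields exactly the paper's inverse $x=u/(2-u-v)$, $y=v(2-v)/((1-v)(2-u-v))$, so the Garc\'ia-Puente--Sottile fallback is never needed. The remaining differences are minor: you deduce non-GEC from Theorem~\ref{thm:intro-surface} rather than from the paper's direct computation $\mu(p)=xy(1+x)^2(2+2x+y)$, which is valid because the support is unimodular and a trapezoid is neither a triangle nor a parallelogram, and that trapezoid has five lattice points, not six.
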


\begin{proof}
Multiplication by a Laurent unit translates $\mathcal L_p$, and an integral
monomial change conjugates it by invertible integral linear maps.  For
$p=(\alpha_0+\alpha_1x+\alpha_2y)^m$,
\[
 \mathcal L_p(x,y)=
 \left(\frac{m\alpha_1x}{\alpha_0+\alpha_1x+\alpha_2y},
       \frac{m\alpha_2y}{\alpha_0+\alpha_1x+\alpha_2y}\right),
\]
whose rational inverse is
\[
 x=\frac{\alpha_0u}{\alpha_1(m-u-v)},\qquad
 y=\frac{\alpha_0v}{\alpha_2(m-u-v)}.
\]
For $p=(\alpha_0+\alpha_1x)^r(\beta_0+\beta_1y)^s$, the two coordinates
separate and the inverse is
\[
 x=\frac{\alpha_0u}{\alpha_1(r-u)},\qquad
 y=\frac{\beta_0v}{\beta_1(s-v)}.
\]
Theorem~\ref{thm:intro-surface} proves the first assertion.

For the converse, take
\[
 p=(1+x)(1+x+y)=1+2x+x^2+y+xy.
\]
Its support is the complete lattice-point set of the smooth trapezoid
$\conv\{(0,0),(2,0),(1,1),(0,1)\}$, hence is unimodular.  Direct calculation
gives
\[
 \mu(p)=xy(1+x)^2(2+2x+y),
\]
and the last factor is coprime to $p$, so $p$ is not GEC.  On the other hand,
if $(u,v)=\mathcal L_p(x,y)$, then
\[
 x=\frac{u}{2-u-v},\qquad
 y=\frac{v(2-v)}{(1-v)(2-u-v)},
\]
which proves birationality.
\end{proof}

\begin{remark}[Nonprimitive parametrizations]
Without unimodularity the first assertion of
Proposition~\ref{prop:gec-ml-degree} fails for a trivial reason: $p=1+x^2+y$
satisfies $\mu(p)\mid p^N$, but its logarithmic polar map has degree two.  Its
support differences generate the index-two lattice $2\Z\oplus\Z$, and the
polar map factors through the isogeny $x\mapsto x^2$.  On the faithful
character lattice, with $w=x^2$, the same toric model is represented by
$1+w+y$, which has ML degree one.
\end{remark}

Strict linear precision implies rational linear precision.  Clarke and Cox
prove that the lattice polygons admitting suitable positive weights with
strict linear precision are
$m\Sigma_2$ and $r\Sigma_1\times s\Sigma_1$
\cite[Theorem~4.9]{ClarkeCox2020}.  Thus their list agrees with the
Newton-polygon list in Theorem~\ref{thm:intro-surface}; this is an agreement
of polygon shapes, not an identification of conditions on a fixed weighted
polynomial.  For a fixed primitive positive weighting, Proposition
\ref{prop:gec-ml-degree} shows that GEC implies rational linear precision,
equivalently ML degree one, while its trapezoidal example shows that the
converse fails.  That trapezoid is also the standard example of rational but
not strict linear precision.  The broader toric-polar classification contains
additional trapezoidal and conic families
\cite[Theorem~1 and Corollary~2]{BothmerRanestadSottile2010}.

In higher dimensions Clarke and Cox conjecture that the polytopes with
strict linear precision are the B\'ezier simploids
$\prod_jm_j\Sigma_{n_j}$ with arbitrary positive dilations
\cite[Conjecture~4.8]{ClarkeCox2020}.  Our compact K\"ahler--Einstein theorem
reaches only the smaller subclass satisfying the Einstein matching rule and
starts from a different hypothesis; it does not resolve that conjecture.

\subsection{Comparison with the work of Manno and Salis}

Manno and Salis have studied the same local problem in a normalized form, and
several of our statements refine or qualify theirs.  At the polynomial and
fixed-point-germ level, they already obtain the
positive surface classification
\cite[Theorem~1.6 and Proposition~2.6]{MannoSalis2022}; their global
open-subset statement is subject to the covering qualification in
Remark~\ref{rem:covering-obstruction}.  In the para-K\"ahler setting they
also classify the real two-variable polynomial solutions arising from their
normalized projective form, allowing either sign
\cite[Proposition~4.5]{MannoSalisPara2025}.  By contrast,
Theorem~\ref{thm:intro-surface} treats arbitrary complex coefficients under
the weaker divisibility condition GEC.

In higher dimensions, the published classification of Manno and Salis
reaches complex dimension six
\cite[Theorem~1.1]{MannoSalis2026}; see also the earlier input of
Arezzo--Loi--Zuddas \cite[Proposition~4.2]{ArezzoLoiZuddas2012}.
Theorem~\ref{thm:intro-global} removes the dimension bound for every smooth
compact toric manifold and every full projective immersion.  We state the
Einstein matching invariantly as $(n_j+1)/m_j=\lambda$: for unequal factor
dimensions, the coefficients displayed in \cite[Theorem~1.1]{MannoSalis2026}
and \cite[Conjecture~2]{MannoSalis2024} are reciprocal to it, giving the
weights $(3,2)$ instead of $(2,3)$ for $\mathbb P^1\times\mathbb P^2$.

The automorphism $F$ in Theorem~\ref{thm:intro-global} cannot be omitted in
general, because an arbitrary K\"ahler--Einstein metric need not be invariant
under the torus we started with.  When the metric is invariant,
Corollary~\ref{cor:intro-fixed-torus} says more: even a monomial system with
omitted lattice points is forced to acquire the full multinomial carrier.

On the local side, the completion and torus-extension steps are those of
Hulin and of Manno and Salis \cite[Lemmas~2.13 and~2.14]{MannoSalis2024}.
The periodicity argument in Section~\ref{sec:local} makes explicit that the
extended $\R^d$-action descends to the original torus.  The normalized
polynomial Corollary~\ref{cor:normalized-polynomial} supplies a second,
algebraic view of the same closure: its new vertex certificate upgrades
Delzant reflexivity to unimodularity of the actual support.  The literal
global assertion for an arbitrary abstract immersed domain is false without
a no-monodromy hypothesis, as Remark~\ref{rem:covering-obstruction} shows.

Finally, projective homogeneous toric varieties are products of projective
spaces \cite{ArzhantsevGaifullin2010}, so Theorem~\ref{thm:intro-global} is
the conclusion that the homogeneity conjecture predicts for compact toric
manifolds.  The proof does not assume homogeneity: it derives it from the
two-dimensional GEC obstruction.

\subsection{Limitations and open problems}

We end with what the method does not reach and the questions it suggests.
Delzant smoothness is used in the
surface self-intersection formulas, the vertex certificate, and lattice
untwisting.  Singular toric varieties and orbifolds would require
index-corrected ledgers.  More fundamentally, the argument needs a torus of
half the real dimension; it does not address the general Loi--Zedda
homogeneity conjecture for projectively induced K\"ahler--Einstein manifolds with a smaller symmetry
group or no torus action.

The following problems appear most direct.
\begin{enumerate}
 \item Extend the signed ledger and the Cauchy--Binet vertex certificate to
 simplicial orbifold polygons.  The
 determinants in \eqref{eq:cauchy-binet}, wall relations, and exposed lattice
 lengths should then carry local index factors.
 \item Classify higher-rank GEC polynomials directly.  The present geometric
 application needs only two-faces, but a factorization theorem in rank three
 or higher could distinguish GEC from other logarithmic-Hessian and toric
 polar conditions without assuming smooth complete compactifications.
 \item Find a replacement for the face-by-face toric reduction under smaller
 symmetry groups.  This is the obstruction between
 Corollary~\ref{cor:intro-manno-salis} and the unrestricted homogeneity
 conjecture for projectively induced K\"ahler--Einstein manifolds.
 \item For $d\ge2$, determine the infinite-support natural exponential
 families satisfying the Fisher-determinant identity
 \eqref{eq:intro-fisher-determinant}; for $d=1$ they are Morris's
 quadratic-variance families.
 Theorem~\ref{thm:intro-nef} settles the entire finite-support class, but its
 projective compactification argument has no direct analogue for a general
 Laplace transform.
\end{enumerate}

\paragraph{AI-use disclosure.}
The author used Anthropic Claude Code and OpenAI Codex as interactive
research and writing tools. They assisted with exploratory discussion,
testing and refinement of ideas, literature and source organization, code
development and verification, mathematical error checking, and editorial
revision.

\bibliographystyle{alpha}
\bibliography{references}

\end{document}